\theoremstyle{definition}
\newtheorem{thm}{Theorem}[section]
\newtheorem{Def}[thm]{Definition}
\newtheorem{pro}[thm]{Proposition}
\newtheorem{cor}[thm]{Corollary}
\newtheorem{lem}[thm]{Lemma}
\newtheorem{ex}[thm]{Example}
\newtheorem{rem}[thm]{Remark}
\newtheorem{que}[thm]{Question}
\theoremstyle{definition}
\begin{document}

\title[Finite group actions on certain stably projectionless C$^*$-algebras]{Finite group actions on certain stably projectionless C$^*$-algebras with the Rohlin property}
\author{Norio Nawata}
\address{Department of Mathematics and Informatics, 
Graduate school of Science,
Chiba University,1-33 Yayoi-cho, Inage, Chiba, 263-8522, Japan}
\email{nawata@math.s.chiba-u.ac.jp}
\keywords{Stably projectionless C$^*$-algebra; Kirchberg's central sequence C$^*$-algebra; Rohlin property}
\subjclass[2010]{Primary 46L55, Secondary 46L35; 46L40}
\thanks{The author is a Research Fellow of the Japan Society for the Promotion of Science.}
\begin{abstract} 
We introduce the Rohlin property and the approximate representability for finite group actions on stably projectionless C$^*$-algebras 
and study their basic properties. We give some examples of finite group actions on the Razak-Jacelon algebra $\mathcal{W}_2$ 
and show some classification results of these actions. This study is based on the work of Izumi, Robert's classification theorem and 
Kirchberg's central sequence C$^*$-algebras. 
\end{abstract}
\maketitle

\section{Introduction} 
A C$^*$-algebra $A$ is said to be \textit{stably projectionless} if $A\otimes\mathbb{K}$ has no non-zero projections where $\mathbb{K}$ is the C$^*$-algebra of compact operators on an infinite-dimensional 
separable Hilbert space. Note that every stably projectionless C$^*$-algebra does not have a unit. In this paper we study finite group actions on stably projectionless C$^*$-algebras. 

In the theory of operator algebras, the study of group actions on operator algebras is one of the most fundamental subjects and has a long history. 
We refer the reader to \cite{I} and references given there for this subject. 
Connes considered the Rohlin property for automorphisms of von Neumann algebras and classified 
automorphisms of the AFD factor of type II$_1$ in \cite{C} and \cite{C2}. It is important to consider the Rohlin property for classifying group actions on operator algebras. 
Using a suitable formulation of the Rohlin property of automorphisms of unital C$^*$-algebras, Kishimoto classified a large class of automorphisms on 
UHF algebras and certain A$\mathbb{T}$ algebras in \cite{Kis1} and \cite{Kis2}. 
Nakamura showed that every aperiodic automorphism of a Kirchberg algebra has the Rohlin property and classified them in \cite{Nak}. 
Izumi, Katsura and Matui showed the classification results of large classes of $\mathbb{Z}^N$-actions (see \cite{IM}, \cite{KM}, \cite{M1} and \cite{M2}). 
Izumi defined the Rohlin property for finite group actions on unital C$^*$-algebras (see also \cite{Kis0} and \cite{HJ}) and classified a large class of finite group actions on 
unital C$^*$-algebras in \cite{I1} and \cite{I2}. 

In the above formulations of the Rohlin property, a partition of unity consisting of projections (in the central sequence C$^*$-algebra) is used. 
Hence it is not clear how to define the Rohlin property for group actions on stably projectionless C$^*$-algebras. 
Although Evans and Kishimoto defined the Rohlin property for automorphisms of non-unital C$^*$-algebras and classified trace scaling automorphisms of certain AF algebras 
in \cite{EK}, their definition requires that C$^*$-algebras have non-zero projections. 
Sato defined the weak Rohlin property for automorphisms of unital projectionless C$^*$-algebras and 
classified a large class of automorphisms of the Jiang-Su algebra $\mathcal{Z}$ in \cite{Sa}. 
Matui and Sato defined the weak Rohlin property for countable amenable group actions and classified large classes of actions of $\mathbb{Z}^2$ and 
the Klein bottle group on $\mathcal{Z}$ in \cite{MS1} and \cite{MS2}. We do not need a partition of unity consisting of projections in the definition of the weak Rohlin 
property. But the weak Rohlin property is too weak from the viewpoint of the classification of finite group actions. 
Moreover their arguments are based on analyses of the actions on UHF algebras with the usual Rohlin property. 
Therefore we need to consider a suitable formulation of the Rohlin property for actions on stably projectionless C$^*$-algebras. 

We shall define the Rohlin property for finite group actions on $\sigma$-unital C$^*$-algebras by using Kirchberg's central sequence C$^*$-algebras 
defined in \cite{Kir2}.  
Kirchberg's central sequence algebra $F(A)$ of a C$^*$-algebra $A$ is defined as the quotient C$^*$-algebra of the central sequence C$^*$-algebra $A_{\infty}$ 
by the annihilator of $A$. Kirchberg's central sequence C$^*$-algebras are useful for proving $\mathcal{Z}$-stability results for non-unital C$^*$-algebras (see \cite{Na2} and \cite{Tik}). 
One of the purposes of this paper is showing that Kirchberg's central sequence C$^*$-algebras are also useful for the classification of group actions on 
certain stably projectionless C$^*$-algebras. 

Let $\mathcal{W}_{2}$ be the Razak-Jacelon algebra studied in \cite{J}, which is a certain simple nuclear stably projectionless C$^*$-algebra having trivial $K$-groups
and a unique tracial state and no unbounded traces. The Razak-Jacelon algebra $\mathcal{W}_2$ is regarded as a stably finite analogue of the Cuntz algebra $\mathcal{O}_2$.  
Moreover Robert showed that  $\mathcal{W}_{2}\otimes \mathbb{K}$ is isomorphic to $\mathcal{O}_2\rtimes_{\alpha}\mathbb{R}$ for some one parameter automorphism group $\alpha$ in \cite{Rob} 
(see also \cite{KK1}, \cite{KK2} and \cite{Dean}). 
Hence it is natural and interesting to consider whether $\mathcal{W}_2$ has the similar properties of $\mathcal{O}_2$. 

In this paper we study mainly finite group actions on $\mathcal{W}_2$. This is based on Izumi's study of finite group actions on $\mathcal{O}_2$ in \cite{I1}. 
This paper is organized as follows: In Section \ref{sec:Pre}, we collect notations and some results. 
In Section \ref{sec:Rohlin}, we define the Rohlin property for finite group actions on $\sigma$-unital C$^*$-algebras and study their basic properties. 
In particular, we show that any two actions of a finite group on $\mathcal{W}_2$ with the Rohlin property are conjugate (Corollary \ref{cor:Razak-Rohlin}). 
We also show that there exist strong $K$-theoretical constraints for the Rohlin property as in the unital case. 
In Section \ref{sec:app-rep}, we introduce the approximate representability for finite group actions on $\sigma$-unital C$^*$-algebras, which is the "dual" notion of the Rohlin property. 
We show a classification result of certain approximately representable actions (Corollary \ref{cor:app-classification}). 
Using this classification result, we give interesting examples of finite group actions on $\mathcal{W}_2$ with the Rohlin property (Corollary \ref{thm:rohlin-example}). 
In Section \ref{sec:sym}, we classify certain "locally representable" actions of $\mathbb{Z}_2$ on $\mathcal{W}_2$ up to conjugacy (Theorem \ref{thm:Main-sym}) and up to 
cocycle conjugacy (Theorem \ref{thm:cocycle-conjugacy}). 
We also construct a locally representable action $\alpha$ of $\mathbb{Z}_2$ on $\mathcal{W}_2$ such that 
$K_0(\mathcal{W}_2\rtimes_{\alpha} \mathbb{Z}_2)=\mathbb{Z}[\frac{1}{2}]$ and $K_1(\mathcal{W}_2\rtimes_{\alpha} \mathbb{Z}_2)=0$.

\section{Preliminaries}\label{sec:Pre}
In this section we shall collect notations and some results. We refer the reader to \cite{Bla} and \cite{Ped2} for basic facts of operator algebras.

\subsection{Notations}
We say that a C$^*$-algebra $A$ is $\sigma$-{\it unital} if $A$ has a countable approximate unit. 
In particular, if $A$ is $\sigma$-unital, then there exists a positive element $s\in A$ such that $\{s^{\frac{1}{n}}\}_{n\in\mathbb{N}}$ is an approximate unit. 
Such a positive element $s$ is called {\it strict positive} in $A$. 
If $A$ is separable, then $A$ is $\sigma$-unital. 
We denote by  $\tilde{A}$ the unitization algebra of $A$. 
A \textit{multiplier algebra}, denoted by $M(A)$, of $A$ is the largest unital C$^*$-algebra that contains $A$ as an essential ideal. 
It is unique up to isomorphism over $A$. If $\alpha$ is an automorphism of $A$, then $\alpha$ extends uniquely to an automorphism $\bar{\alpha}$ of $M(A)$. 
A homomorphism $\phi$ from $A$ to $B$ is said to be \textit{nondegenerate} if $\phi (A)B$ is dense in $B$. 
A nondegenerate homomorphism from $A$ to $B$ can be uniquely extended to a homomorphism $\bar{\phi}$ from $M(A)$ to $M(B)$. 
Note that if $A\subset B$ is an nondegenerate inclusion, then every approximate unit for $A$ is an approximate unit for $B$. 

For a unitary element $u$ in $M(A)$, define an automorphism $\mathrm{Ad}(u)$ of $A$ by $\mathrm{Ad}(u) (a)=uau^*$ for $a\in A$, and such an 
automorphism is called an \textit{inner automorphism}. Let $\mathrm{Aut}(A)$ denote the automorphism group of $A$, which is equipped with 
the topology of pointwise norm convergence. An automorphism $\alpha$ is said to be \textit{approximately inner} if $\alpha $ is in the closure of the inner automorphism group. 
We say that two automorphisms $\alpha$ and $\beta$ are \textit{approximately unitarily equivalent} if $\alpha\circ \beta^{-1}$ is approximately inner. 

For a locally compact group $G$, an \textit{action} $\alpha$ of $G$ on $A$ is a continuous homomorphism from $G$ to $\mathrm{Aut}(A)$. 
We say that $\alpha$ is \textit{outer} if $\alpha_g$ is not an inner automorphism for any $g\in G\setminus \{\iota \}$ where $\iota$ is the identity of $G$. 
An $\alpha$-\textit{cocycle} is a continuous map $u$ from $G$ to the unitary group $U(M(A))$ of $M(A)$ such that $u(gh)=u(g)\alpha_{g}(u(h))$ for any $g,h\in G$. 
If $u$ is an $\alpha$-cocycle, we define the action $\alpha^{u}$ of $G$ on $A$ by $\alpha^{u}_g (a)=\mathrm{Ad}(u(g))\circ \alpha_g(a)$ for any $a\in A$ and $g\in G$. 
For two $G$-actions $\alpha$ on $A$ and $\beta$ on $B$, we say that $\alpha$ and $\beta$ are \textit{conjugate},  written $(A,\alpha)\cong (B,\beta)$, 
if there exists an isomorphism $\theta$ from $A$ onto $B$ 
such that $\theta \circ \alpha_g=\beta_{g}\circ \theta$ for any $g\in G$. 
They are said to be \textit{cocycle conjugate} if there exists an $\alpha$-cocycle $u$ such that $\alpha^{u}$ is conjugate to $\beta$. 

Let $T(A)$ be the set of densely defined lower semicontinuous traces on $A$ and $T_1(A)$ the set of tracial states on $A$, and put 
$T_0(A):= \{\tau \in T(A)\ | \ \|\tau \| \leq 1  \}$. If $A$ is unital, then $T_1(A)$ is a Choquet simplex (see \cite[Theorem 3.1.18]{Sak}). 
There exists a natural one to one correspondence between $T_0(A)$ and $T_1 (\tilde{A})$, and hence $T_0(A)$ is a Choquet simplex. 
Every tracial state $\tau$ on $A$ extends uniquely to a tracial state $\bar{\tau}$ on $M(A)$ (see, for example, \cite[Proposition 2.9]{Tik}). 
For $\tau\in T_1(A)$, consider the Gelfand-Naimark-Segal (GNS) construction $(\pi_{\tau}, H_{\tau}, \xi)$. 
Then $\tau$ extends uniquely to a normal tracial state $\tilde{\tau}$ on $\pi_{\tau} (A)^{''}$. 
Note that if $\tau$ is an extremal tracial state, then $\pi_{\tau} (A)^{''}$ is a finite factor. 
Let $\alpha$ be an automorphism of $A$ such that $\tau \circ \alpha =\tau$. 
Then $\alpha$ extends uniquely to an automorphism $\tilde{\alpha}$ of $\pi_{\tau} (A)^{''}$. 
Moreover if $\alpha$ is an action of $G$ on $A$ such that  $\tau \circ \alpha_g =\tau$ for any $g\in G$, then 
$\alpha$ extends uniquely to a von Neumann algebraic action $\tilde{\alpha}$ on $\pi_{\tau}(A)^{''}$. 
We say that an action $\alpha$ of $G$ on a C$^*$-algebra $A$ with a unique tracial state $\tau$ 
is \textit{strongly outer} if $\tilde{\alpha}_g$ is not inner in $\pi_{\tau}(A)^{''}$ for any $g\in G\setminus \{\iota\}$. 

We denote by $K(H)$, $\mathbb{K}$ and $M_{n^\infty}$ for $n\in\mathbb{N}$ the C$^*$-algebra of compact operators on a Hilbert space $H$, the C$^*$-algebra of compact operators on an infinite-dimensional 
separable Hilbert space and the uniformly hyperfinite (UHF) algebra of type $n^{\infty}$, respectively. 
Let $\mathbb{Z}_n$ denote the cyclic group of order $n$. For a $\mathbb{Z}_n$-action $\alpha$, $\alpha_1$ is denoted by $\alpha$ for simplicity. 
If $G$ is a finite group, we denote by $| G|$ the order of $G$. 

\subsection{Crossed products and fixed point algebras}\label{sec:crossed}
For an action $\alpha$ of $G$ on $A$, we denote by $A\rtimes_{\alpha} G$ and $A^{\alpha}$ the reduced crossed product C$^*$-algebra and the fixed point algebra, respectively. 
If $G$ is a compact group, then $A^{\alpha}$ is isomorphic to a conner algebra of $A\rtimes_{\alpha} G$ \cite{Ros}. 
Indeed, define a projection $e_{\alpha}$ in $M(A\rtimes_{\alpha}G)$ by 
$$
e_{\alpha}:=\int_{G}\lambda_{g} dg
$$
where $\lambda_g$ is the implementing unitary of $\alpha_g$ in $M(A\rtimes_{\alpha}G)$ and $dg$ stands for the normalized Haar measure. 
Then $A^{\alpha}$ is isomorphic to $e_{\alpha}(A\rtimes_{\alpha} G)e_{\alpha}$. Note that we have $M(A^{\alpha})=M(A)^{\bar{\alpha}}$. 

When $G$ is a locally compact abelian group, let $\hat{\alpha}$ denote the dual action of $\alpha$ on $A\rtimes_{\alpha} G$. 
The Takesaki-Takai duality theorem \cite{Ta} shows that there exists an isomorphism $\Phi$ from 
$A\rtimes_{\alpha}G\rtimes_{\hat{\alpha}}\hat{G}$ onto $A\otimes K(\ell^2(G))$ such that $\Phi\circ\hat{\hat{\alpha}}=(\alpha\otimes \mathrm{Ad}(\rho ) ) \circ\Phi$ where 
$\rho$ is the right regular representation of $G$. 
If $G$ is a finite abelian group, 
then we see that $\bar{\Phi}(e_{\alpha})=1_{M(A)}\otimes e$ where $e\in K(\ell^2(G))$ is the projection onto 
the constant functions and may regard $\Phi (a)$ as a diagonal matrix 
$$
 \left(\begin{array}{cccc}
            a    &     &  &  \\ 
                 &  \alpha_{g_1}(a)   &  &  \\
                 &                    & \ddots  &  \\
                 &                    &         & \alpha_{g_{|G|-1}}(a)
 \end{array} \right) 
$$
where $a\in A$ and $G=\{\iota,g_1,...,g_{|G|-1}\}$. 

Assume that $\tau$ is a tracial state on $A$ such that $\tau \circ \alpha_g =\tau$ for any $g\in G$. 
Then we can construct a von Neumann algebraic crossed product $W^*(\pi_{\tau}(A)^{''}, \tilde{\alpha}, G)$. 
If $A$ is simple, then we may consider 
$$
A\rtimes_{\alpha}G \subset M(A\rtimes_{\alpha}G) \subset W^*(\pi_{\tau}(A)^{''}, \tilde{\alpha}, G)
$$
and $A\rtimes_{\alpha}G$ is weakly dense in $W^*(\pi_{\tau}(A)^{''}, \tilde{\alpha}, G)$ because $\pi_{\tau}$ is a faithful representation of $A$. 
The following proposition is well-known (see, for example, \cite{Bed}). 

\begin{pro}\label{pro:strongly outer}
Let $A$ be a simple C$^*$-algebra with a unique tracial state $\tau$, and let $\alpha$ be a strongly outer action of a discrete group $G$ on $A$. 
Then $A\rtimes_{\alpha}G$ has a unique tracial state $E\circ \tau$ where $E$ is the canonical conditional expectation from $A\rtimes_{\alpha}G$ 
onto $A$. 
\end{pro}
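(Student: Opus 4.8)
The plan is to show first that $\tau\circ E$ is a tracial state, and then that every tracial state on $A\rtimes_\alpha G$ coincides with it; the second part is where strong outerness enters, via the von Neumann algebra generated by the GNS representation of the given trace.

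That $\tau\circ E$ is a tracial state is routine. Since $A\rtimes_\alpha G$ is the reduced crossed product, $E$ is faithful, so $\tau\circ E$ is a faithful positive functional; as the inclusion $A\subset A\rtimes_\alpha G$ is nondegenerate, an approximate unit of $A$ is one for $A\rtimes_\alpha G$, whence $\|\tau\circ E\|=1$. The identity $(\tau\circ E)(xy)=(\tau\circ E)(yx)$ need only be checked for $x=a\lambda_g$ and $y=b\lambda_h$ with $a,b\in A$, where $E(a\lambda_g\,b\lambda_h)=\delta_{gh,\iota}\,a\alpha_g(b)$ and (using that $g h=\iota$ forces $h g=\iota$) the identity reduces to the $G$-invariance and the traciality of $\tau$.

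For uniqueness, let $\phi$ be a tracial state on $A\rtimes_\alpha G$. Its restriction to $A$ is nonzero (again by nondegeneracy of the inclusion), hence a tracial state, hence equal to $\tau$. Let $(\pi_\phi,H_\phi,\xi_\phi)$ be the GNS triple of $\phi$, extend $\pi_\phi$ to $M(A\rtimes_\alpha G)$, and set $v_g:=\pi_\phi(\lambda_g)$, $P:=\pi_\phi(A)''$, $N:=\pi_\phi(A\rtimes_\alpha G)''=(P\cup\{v_g:g\in G\})''$, and $\bar\phi:=\langle\,\cdot\,\xi_\phi,\xi_\phi\rangle$, a normal tracial state on $N$. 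Since $\phi|_A=\tau$, the representation $\pi_\phi|_A$ is quasi-equivalent to $\pi_\tau$, so $P$ is isomorphic to $\pi_\tau(A)''$ compatibly with the traces; as $\tau$ is the unique, hence extremal, trace, $P$ is a finite factor, and $\mathrm{Ad}(v_g)$ restricts to an automorphism of $P$ corresponding to $\tilde\alpha_g$, which is outer for $g\neq\iota$ by strong outerness of $\alpha$. Being a trace, $\bar\phi$ is faithful on $N$, so by Takesaki's theorem there is a faithful normal $\bar\phi$-preserving conditional expectation $\mathbb{E}\colon N\to P$.

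The key point is that $\mathbb{E}(v_g)=0$ for $g\neq\iota$. From the $P$-bimodularity of $\mathbb{E}$ together with $v_g x v_g^*=\tilde\alpha_g(x)$ (so $u v_g u^*=u\,\tilde\alpha_g(u^*)\,v_g$) one gets $u\,\mathbb{E}(v_g)\,u^*=u\,\tilde\alpha_g(u^*)\,\mathbb{E}(v_g)$ for every unitary $u\in P$, i.e.\ $\mathbb{E}(v_g)$ intertwines the identity automorphism with $\tilde\alpha_g$; a standard polar-decomposition argument in the factor $P$ then shows that either $\mathbb{E}(v_g)=0$ or a scalar multiple of $\mathbb{E}(v_g)$ is a unitary implementing $\tilde\alpha_g$, and outerness excludes the latter for $g\neq\iota$. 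Hence $\bar\phi(xv_g)=\bar\phi(\mathbb{E}(xv_g))=\bar\phi(x\,\mathbb{E}(v_g))=0$ for all $x\in P$ and $g\neq\iota$. Evaluating on the norm-dense $*$-subalgebra spanned by the elements $a\lambda_g$ ($a\in A$, $g\in G$) now gives $\phi(a\lambda_\iota)=\tau(a)=(\tau\circ E)(a\lambda_\iota)$ and $\phi(a\lambda_g)=0=(\tau\circ E)(a\lambda_g)$ for $g\neq\iota$, so $\phi=\tau\circ E$. The main obstacle is the von Neumann algebra step: checking that $\pi_\phi|_A$ is quasi-equivalent to $\pi_\tau$ (so that $P$ really is the finite factor $\pi_\tau(A)''$ and the picture is that of the von Neumann crossed product $W^*(\pi_\tau(A)'',\tilde\alpha,G)$), and carrying out the outerness computation cleanly; everything else is bookkeeping, and these standard facts are available, e.g., in \cite{Bed}.
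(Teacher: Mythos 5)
Your argument is correct, and it is essentially the standard proof behind the paper's treatment of this statement: the paper gives no proof at all, dismissing the proposition as well-known with a reference to \cite{Bed}, and your route (restriction of any trace to $A$ equals $\tau$, passage to the von Neumann picture where $\pi_\phi(A)''$ is the finite factor $\pi_\tau(A)''$, and the trace-preserving conditional expectation annihilating $\pi_\phi(\lambda_g)$ for $g\neq\iota$ by outerness of $\tilde\alpha_g$) is exactly the argument that citation encodes. Two microscopic points to tighten: nondegeneracy gives $\|\phi|_A\|=1$ (not merely $\phi|_A\neq 0$), which is what forces $\phi|_A=\tau$, and the faithfulness of $\bar\phi$ on $N$ uses that $\xi_\phi$ is cyclic together with the trace property, not the trace property alone.
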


Hiroki Matui told us the following lemmas. 

\begin{lem}\label{lem:compact}
Let $A\subset B$ be a nondegenerate inclusion of separable C$^*$-algebras. 
If $T_1(A)$ is compact, then $T_1(B)$ is compact. 
\end{lem}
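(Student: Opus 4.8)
The plan is to exhibit $T_1(B)$ as a closed subset of the compact space $T_0(B)$. First I would recall, from the discussion preceding this lemma, that $T_0(B)$ — identified with the set of bounded positive tracial functionals on $B$ of norm at most $1$ and equipped with the weak$^{*}$ topology inherited from $B^{*}$ — is a compact Choquet simplex. Restriction of functionals then defines a map $r\colon T_0(B)\to T_0(A)$, $r(\tau)=\tau|_{A}$: the restriction of a bounded positive tracial functional on $B$ of norm at most $1$ is again a bounded positive tracial functional with $\|\tau|_{A}\|\le\|\tau\|\le 1$, and since $\tau\mapsto\tau(a)$ is weak$^{*}$ continuous for each fixed $a\in A$, the map $r$ is weak$^{*}$ continuous.

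The heart of the argument is the identity $T_1(B)=r^{-1}(T_1(A))$. For the inclusion ``$\subseteq$'', I would fix $\tau\in T_1(B)$ and an approximate unit $(e_{\lambda})_{\lambda}$ for $A$. Because the inclusion $A\subset B$ is nondegenerate, $(e_{\lambda})_{\lambda}$ is also an approximate unit for $B$ (as noted in Section~\ref{sec:Pre}), and since $\tau$ is a state on $B$ this forces $\tau(e_{\lambda})\to\|\tau\|=1$. Hence $\|\tau|_{A}\|=\lim_{\lambda}\tau|_{A}(e_{\lambda})=\lim_{\lambda}\tau(e_{\lambda})=1$, so $\tau|_{A}\in T_1(A)$. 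For ``$\supseteq$'', if $\tau\in T_0(B)$ satisfies $\tau|_{A}\in T_1(A)$, then $1=\|\tau|_{A}\|\le\|\tau\|\le 1$, so $\tau$ is a tracial state on $B$. This identity is where nondegeneracy is genuinely used, via the passage of approximate units from $A$ to $B$; I expect the verification $\|\tau|_{A}\|=1$ to be the only non-formal point in the proof.

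Granting the identity, the conclusion is immediate: $T_1(A)$ is compact by hypothesis, hence weak$^{*}$ closed in $T_0(A)$ (a compact subset of a Hausdorff space is closed), so $T_1(B)=r^{-1}(T_1(A))$ is closed in $T_0(B)$ by continuity of $r$; being a closed subset of the compact space $T_0(B)$, it is compact. (The separability hypothesis is not needed for this argument, though it guarantees that these trace spaces are metrizable.)
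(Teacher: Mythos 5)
Your proof is correct and uses the same two ingredients as the paper's: nondegeneracy (an approximate unit for $A$ is one for $B$, so elements of $T_1(B)$ restrict to elements of $T_1(A)$, and a trace in $T_0(B)$ whose restriction is a state must itself be a state) and compactness of $T_1(A)$ exploited as closedness. The paper packages this as a proof by contradiction with a sequence in $T_0(B)$ (implicitly using separability for metrizability), while you argue directly that $T_1(B)=r^{-1}(T_1(A))$ is closed in the compact set $T_0(B)$ under the continuous restriction map $r$ — a cleaner rendering of the same argument which, as you observe, does not actually need separability.
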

\begin{proof}
On the contrary, suppose that $T_1(B)$ were not compact. 
Then there exist a sequence $\{\tau_n\}_{n\in\mathbb{N}}$ in $T_1(B)$ and an element $\tau$ in $T_0(B)$ such that 
$\tau _n$ converges to $\tau$ (in the weak-$^*$ topology) and $\| \tau \| <1$. 
Since $T_1(A)$ is compact, the restriction of $\tau$ on $A$ is a tracial state on $A$. 
This is a contradiction because $A\subset B$ is a nondegenerate inclusion. 
\end{proof}

\begin{lem}\label{lem:two extremal}
Let $B$ be a C$^*$-algebra, and let $\beta$ be an action of a finite group $G$ on $B$. 
Assume that $T_1(B)$ is compact and there exists a unique tracial state $\tau$ on $B$ such that 
$\tau \circ \beta_{g} = \tau$ for any $g\in G$. Then $B$ has at most $|G|$ extremal tracial states. 
\end{lem}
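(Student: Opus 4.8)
The plan is to deduce from the uniqueness of $\tau$ that every extremal tracial state of $B$ lies in a single $G$-orbit. Since each $\beta_g$ induces an affine homeomorphism $\sigma\mapsto\sigma\circ\beta_g$ of $T_1(B)$, and affine bijections carry extreme points to extreme points, $\beta$ permutes the extremal tracial states of $B$; hence any such orbit has at most $|G|$ elements, and it suffices to prove that there is just one orbit.

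First I would check that $T_1(B)$ is a Choquet simplex. A tracial state is a bounded, everywhere-defined, lower semicontinuous trace of norm one, so $T_1(B)\subseteq T_0(B)$, and $T_1(B)$ is in fact a face of $T_0(B)$: if a tracial state equals $t\tau_1+(1-t)\tau_2$ with $\tau_1,\tau_2\in T_0(B)$ and $0<t<1$, then additivity of the norm on positive functionals gives $1=t\|\tau_1\|+(1-t)\|\tau_2\|$, forcing $\|\tau_1\|=\|\tau_2\|=1$. By hypothesis $T_1(B)$ is compact, hence a closed face of $T_0(B)$; since $T_0(B)$ is a Choquet simplex (via its identification with $T_1(\tilde{B})$) and a closed face of a Choquet simplex is again a Choquet simplex, so is $T_1(B)$. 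Moreover $T_1(B)\neq\varnothing$ since $\tau\in T_1(B)$, so by the Krein--Milman theorem it has at least one extreme point $\sigma_0$.

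The key step would be to observe that for every extremal tracial state $\sigma$ of $B$ one has
$$
\frac{1}{|G|}\sum_{g\in G}\sigma\circ\beta_g=\tau ,
$$
because the left-hand side is a $\beta$-invariant tracial state and $\tau$ is the only one. Consequently the finitely supported probability measure $\mu_\sigma:=\frac{1}{|G|}\sum_{g\in G}\delta_{\sigma\circ\beta_g}$ on $T_1(B)$ has barycenter $\tau$ and is carried by finitely many extreme points, hence is a maximal measure; by Choquet's uniqueness theorem for simplices, $\tau$ is the barycenter of a \emph{unique} maximal measure, so $\mu_\sigma$ does not depend on $\sigma$. Comparing the (finite) sets of atoms, any two extremal tracial states $\sigma,\sigma'$ satisfy $\{\sigma\circ\beta_g:g\in G\}=\{\sigma'\circ\beta_g:g\in G\}$, and in particular $\sigma'=\sigma\circ\beta_g$ for some $g\in G$. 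Thus every extremal tracial state lies in $\{\sigma_0\circ\beta_g:g\in G\}$, a set of cardinality at most $|G|$.

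The only slightly delicate points are the identification of $T_1(B)$ as a closed face of the Choquet simplex $T_0(B)$ (hence as a simplex in its own right) and the remark that a finitely supported measure on extreme points is maximal; both are routine, and the actual content is simply that averaging an extremal trace over $G$ produces the unique $\beta$-invariant tracial state $\tau$.
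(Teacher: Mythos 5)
Your proof is correct and follows essentially the same route as the paper's: the heart of both arguments is that averaging any extremal tracial state over $G$ must give $\tau$ by uniqueness of the invariant trace, and then the uniqueness of barycentric decompositions in a Choquet simplex forces all extremal traces into a single $G$-orbit of size at most $|G|$. The only difference is presentational: the paper argues by contradiction, invoking the simplex property of $T_0(B)$ directly, while you spell out the Choquet-theoretic step (closed face, unique maximal representing measure) in more detail.
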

\begin{proof}
By the Krein-Milman theorem, $T_1(B)$ is the closed convex hull of its extreme points $\mathrm{ex}(T_1(B))$. 
Let $\tau_{1}$ be an extremal tracial state on $B$. Then $\tau_1 \circ \beta_g$ is an extremal tracial state for any $g\in G$. 
Since $\tau$ is the unique tracial state on $B$ such that $\tau \circ \beta_g = \tau$ for any $g\in G$, 
we have $\tau =\frac{1}{|G|}\sum_{g\in G}\tau_1 \circ \beta_g$. 
On the contrary, suppose that $\mathrm{ex}(T_1(B))$ contained at least $|G |+1$ points. 
Then there exists an extremal tracial $\tau_{2}$ such that $\tau_2\neq \tau_1\circ \beta_g$ for any $g\in G$, and 
we have $\tau =\frac{1}{|G|}\sum_{g\in G}\tau_2\circ \beta_g $. 
This is a contradiction because $T_0(B)$ is a Choquet simplex. 
\end{proof}

\begin{pro}
Let $A$ be a separable C$^*$-algebra with a unique tracial state $\tau$, and let $\alpha$ be an action of a finite abelian group on $A$. 
Then $A\rtimes_{\alpha}G$ has  at most $|G|$ extremal tracial states. 
\end{pro}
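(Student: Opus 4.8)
The plan is to deduce this from Lemma \ref{lem:two extremal} applied to the reduced crossed product $B:=A\rtimes_{\alpha}G$ equipped with the dual action $\hat{\alpha}$ of the Pontryagin dual $\hat{G}$, which is again a finite abelian group with $|\hat{G}|=|G|$. It then suffices to verify the two hypotheses of that lemma for the pair $(B,\hat{\alpha})$: that $T_1(B)$ is compact, and that there is a unique $\hat{\alpha}$-invariant tracial state on $B$.

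First I would check compactness of $T_1(B)$. Since $A$ has a unique tracial state $\tau$, the set $T_1(A)$ is a single point and in particular compact; moreover $A$ is separable, hence so is $B$ because $G$ is finite. The canonical embedding $A\subset B$ is nondegenerate: the elements $a\lambda_g$ $(a\in A,\ g\in G)$ have dense span in $B$, and $A\cdot(a\lambda_g)$ has dense span $A\lambda_g$ since $\overline{A\cdot A}=A$. Thus Lemma \ref{lem:compact} applies and gives that $T_1(B)$ is compact.

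Next I would establish the uniqueness of the $\hat{\alpha}$-invariant tracial state. Let $\hat{E}\colon B\to B^{\hat{\alpha}}$ be the canonical conditional expectation obtained by averaging over $\hat{G}$; on generators it is given by $\hat{E}(a\lambda_g)=a$ if $g=\iota$ and $\hat{E}(a\lambda_g)=0$ otherwise, and $B^{\hat{\alpha}}$ is canonically identified with $A$. Any $\hat{\alpha}$-invariant tracial state $\sigma$ on $B$ satisfies $\sigma=\sigma\circ\hat{E}$, so it is determined by its restriction $\sigma|_A$, which is a tracial state on $A$ and hence equals $\tau$; thus $\sigma=\tau\circ\hat{E}$ is the only candidate. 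Conversely, $\tau$ is automatically $\alpha$-invariant (each $\tau\circ\alpha_g$ is a tracial state on $A$, hence equals $\tau$), and a short computation on pairs of generators $a\lambda_g$, $b\lambda_h$ — using that $G$ is abelian so that $gh=\iota$ and $hg=\iota$ are the same condition, together with the trace property of $\tau$ and $\tau\circ\alpha_g=\tau$ — shows $\tau\circ\hat{E}(a\lambda_g\cdot b\lambda_h)=\tau\circ\hat{E}(b\lambda_h\cdot a\lambda_g)$. Hence $\tau\circ\hat{E}$ is a tracial state, clearly $\hat{\alpha}$-invariant, so it is the unique $\hat{\alpha}$-invariant tracial state on $B$.

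With both hypotheses verified, Lemma \ref{lem:two extremal} applied to $(B,\hat{\alpha})$ yields that $B=A\rtimes_{\alpha}G$ has at most $|\hat{G}|=|G|$ extremal tracial states, which is the assertion. The only mildly delicate point is the verification that $\tau\circ\hat{E}$ is a trace, where the hypothesis that $G$ is abelian is genuinely used (both to form the dual action of $\hat{G}$ and to make the generator computation work); the remaining steps are routine.
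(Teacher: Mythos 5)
Your proof is correct, and its skeleton is the same as the paper's: compactness of $T_1(A\rtimes_{\alpha}G)$ via Lemma \ref{lem:compact}, uniqueness of the $\hat{\alpha}$-invariant tracial state, and then Lemma \ref{lem:two extremal} applied to the dual action of $\hat{G}$. The only divergence is in the middle step: the paper gets uniqueness of the $\hat{\alpha}$-invariant trace by invoking the Takesaki--Takai isomorphism $A\rtimes_{\alpha}G\rtimes_{\hat{\alpha}}\hat{G}\cong A\otimes M_{|G|}(\mathbb{C})$, which has a unique tracial state, whereas you argue directly with the canonical conditional expectation $\hat{E}\colon A\rtimes_{\alpha}G\to A=(A\rtimes_{\alpha}G)^{\hat{\alpha}}$, showing any invariant trace equals $\tau\circ\hat{E}$ and verifying that $\tau\circ\hat{E}$ is indeed a trace. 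Both arguments are short; yours is a bit more self-contained and also exhibits the invariant trace explicitly, while the paper's is quicker given that the duality picture is already set up in Section \ref{sec:crossed}. One small correction: in your generator computation the condition $gh=\iota$ is equivalent to $hg=\iota$ in any group, so commutativity is not what makes that step work; where abelianness of $G$ is genuinely needed is in having the dual action of $\hat{G}$ at all, so that Lemma \ref{lem:two extremal} can be applied with $|\hat{G}|=|G|$.
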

\begin{proof}
Lemma \ref{lem:compact} shows that $T_1(A\rtimes_{\alpha}G)$ is compact. 
Since $A\rtimes_{\alpha}G\rtimes_{\hat{\alpha}}\hat{G}$ is isomorphic to $A\otimes M_{|G|}(\mathbb{C})$, 
$A\rtimes_{\alpha}G\rtimes_{\hat{\alpha}}\hat{G}$ has a unique tracial state. 
Hence we see that the fixed point of $\hat{\alpha}$ in $T_1(A\rtimes_{\alpha}G)$ is only one point. 
Therefore $A\rtimes_{\alpha}G$ has at most $|G|$ extremal tracial states by Lemma \ref{lem:two extremal}. 
\end{proof}

\begin{pro}\label{pro:two extremal}
Let $A$ be a simple separable C$^*$-algebra with a unique tracial state $\tau$, and let $\alpha$ be an action of $\mathbb{Z}_2$ on $A$. 
Then $A\rtimes_{\alpha}\mathbb{Z}_2$ has exactly two extremal tracial states if and only if 
there exists a unitary element $U_{\alpha}$ in $\pi_{\tau}(A)^{''}$ such that $\tilde{\alpha} =\mathrm{Ad}(U_{\alpha})$ and $U_{\alpha}^2 =1$. 
\end{pro}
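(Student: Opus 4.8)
The plan is to reduce both directions to whether the extended automorphism $\tilde\alpha=\tilde\alpha_1$ of the finite factor $M:=\pi_\tau(A)''$ is inner, using Proposition~\ref{pro:strongly outer}, the proposition immediately above (at most $|G|$ extremal traces), and the fact that a unitary implementing an order-two automorphism of a factor can be rescaled to a symmetry. Note first that, since $\tau$ is the unique and hence extremal tracial state of the simple algebra $A$, $M$ is a finite factor and $\tilde\tau$ is a normal tracial state on it; moreover $A\rtimes_\alpha\mathbb{Z}_2$ is separable, $A\subset A\rtimes_\alpha\mathbb{Z}_2$ is nondegenerate, and $T_1(A)=\{\tau\}$ is compact, so $T_1(A\rtimes_\alpha\mathbb{Z}_2)$ is compact by Lemma~\ref{lem:compact} and, by the preceding proposition, has at most two extreme points.

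For the forward direction, suppose $A\rtimes_\alpha\mathbb{Z}_2$ has exactly two extremal tracial states; then it has more than one tracial state, so $\alpha$ is not strongly outer by Proposition~\ref{pro:strongly outer}. Since $\mathbb{Z}_2$ has a single nontrivial element, $\tilde\alpha$ is inner, say $\tilde\alpha=\mathrm{Ad}(V)$ with $V\in M$ unitary. From $\alpha^2=\mathrm{id}$ we get $\mathrm{Ad}(V^2)=\mathrm{id}_M$, so $V^2=\mu 1$ for some scalar $\mu$ of modulus one. Choosing $\zeta$ of modulus one with $\zeta^2=\bar\mu$ and setting $U_\alpha:=\zeta V$ gives a unitary in $M$ with $\tilde\alpha=\mathrm{Ad}(U_\alpha)$ and $U_\alpha^2=\zeta^2 V^2=\bar\mu\mu 1=1$.

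Conversely, assume $\tilde\alpha=\mathrm{Ad}(U_\alpha)$ with $U_\alpha\in M$ unitary and $U_\alpha^2=1$. Since $\mathbb{Z}_2$ is amenable, the reduced and full crossed products coincide, so by the universal property the assignments $a\mapsto\pi_\tau(a)$ ($a\in A$) and $\lambda_1\mapsto\pm U_\alpha$ extend to $^*$-homomorphisms $\pi_\pm\colon A\rtimes_\alpha\mathbb{Z}_2\to M$; they are well defined because $\mathrm{Ad}(\pm U_\alpha)=\tilde\alpha_1$ on $M$ and $(\pm U_\alpha)^2=1=\lambda_1^2$, and nondegenerate because $\pi_\tau$ is, so $\psi_\pm:=\tilde\tau\circ\pi_\pm$ are tracial states on $A\rtimes_\alpha\mathbb{Z}_2$. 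Both restrict to $\tau$ on $A$, while $\psi_\pm(a\lambda_1)=\pm\tilde\tau(\pi_\tau(a)U_\alpha)$; were these equal, $\tilde\tau(\,\cdot\,U_\alpha)$ would vanish on the $\sigma$-weakly dense subalgebra $\pi_\tau(A)$ of $M$, hence on all of $M$ by normality of $\tilde\tau$, contradicting $\tilde\tau(U_\alpha^*U_\alpha)=1$. Thus $\psi_+\neq\psi_-$, so $A\rtimes_\alpha\mathbb{Z}_2$ has at least two tracial states; combined with the at-most-two bound above and the compactness of $T_1(A\rtimes_\alpha\mathbb{Z}_2)$, the Krein--Milman theorem yields exactly two extremal tracial states.

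The delicate points are the passage from an arbitrary inner implementer to a symmetry, which is exactly where simplicity and uniqueness of the trace enter (through $M$ being a factor), and the verification that $\psi_+$ and $\psi_-$ genuinely differ, which relies on the weak density of $\pi_\tau(A)$ in $M$ and the normality of $\tilde\tau$. Everything else is a direct application of Proposition~\ref{pro:strongly outer}, Lemma~\ref{lem:compact}, and the proposition immediately preceding.
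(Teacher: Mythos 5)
Your proof is correct and takes essentially the same route as the paper: the forward direction (Proposition \ref{pro:strongly outer} gives innerness of $\tilde\alpha$, then factoriality of $\pi_\tau(A)''$ lets you rescale $V$ to a symmetry) is identical, and in the converse your traces $\psi_\pm$ are precisely the restrictions of the two normal traces the paper extracts from $W^*(\pi_\tau(A)'',\tilde\alpha,\mathbb{Z}_2)\cong\pi_\tau(A)''\oplus\pi_\tau(A)''$, distinguished by the same weak-density-plus-normality argument and counted via the same at-most-$|G|$ proposition. Producing them through the covariant pair $(\pi_\tau,\pm U_\alpha)$ and closing with Krein--Milman instead of verifying extremality of the two traces directly is only a cosmetic variation.
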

\begin{proof}
Assume that there exists a unitary element $U_{\alpha}$ in $\pi_{\tau}(A)^{''}$ such that $\tilde{\alpha} =\mathrm{Ad}(U_{\alpha})$ and $U_{\alpha}^2 =1$. 
Then $W^*(\pi_{\tau}(A)^{''}, \tilde{\alpha}, \mathbb{Z}_2)$ is isomorphic to $\pi_{\tau}(A)^{''}\oplus\pi_{\tau}(A)^{''}$. 
Hence we see that $W^*(\pi_{\tau}(A)^{''}, \tilde{\alpha}, \mathbb{Z}_2)$ has two extremal normal tracial states $\sigma_1$ and $\sigma_2$ 
such that $\sigma_1 (m_0+m_1\lambda ) =\tilde{\tau}(m_0) +\tilde{\tau}(m_1U_{\alpha})$ and $\sigma_2 (m_0+m_1\lambda ) =\tilde{\tau}(m_0) -\tilde{\tau}(m_1U_{\alpha})$ 
where $\lambda$ is the implementing unitary of $\alpha$ and $m_0,m_1\in \pi_{\tau}(A)^{''}$. 
Since $A\rtimes_{\alpha}\mathbb{Z}_2$ is weakly dense in $W^*(\pi_{\tau}(A)^{''}, \tilde{\alpha}, \mathbb{Z}_2)$ and $\pi_{\tau}(A)^{''}$ is a factor, 
we see that the restrictions of $\sigma_1$ and $\sigma_2$ on $A\rtimes_{\alpha}\mathbb{Z}_2$ are distinct extremal tracial states on $A\rtimes_{\alpha}\mathbb{Z}_2$. 
By the proposition above, $A\rtimes_{\alpha}\mathbb{Z}_2$ has exactly two extremal tracial states. 

Conversely, assume that $A\rtimes_{\alpha}\mathbb{Z}_2$ has exactly two extremal tracial states. 
Then there exists a unitary element $V$ in $\pi_{\tau}(A)^{''}$ such that $\tilde{\alpha} =\mathrm{Ad}(V)$ by Proposition \ref{pro:strongly outer}. 
Since $\pi_{\tau}(A)^{''}$ is a factor, there exists a real number $t$ such that $V^{2}=e^{2\pi it}1$, and put $U_{\alpha} :=e^{-\pi it}V$. 
Then $U_{\alpha}$ has the desired property. 
\end{proof}

\begin{rem}\label{rem:two extremal}
Choose a unitary element $U_{\alpha}$ in $\pi_{\tau}(A)^{''}$ such that $\tilde{\alpha} =\mathrm{Ad}(U_{\alpha})$ and $U_{\alpha}^2 =1$. \\ 
(i) Let $V$ be a unitary element in $\pi_{\tau}(A)^{''}$ such that $\tilde{\alpha} =\mathrm{Ad}(V)$. 
Then there exists a real number $t$ such that $V=e^{2\pi it}U_{\alpha}$ because $\pi_{\tau}(A)^{''}$ is a factor. 
Hence the value $|\tilde{\tau} (V)|$ does not depend on the choice of such a unitary element $V$ and is determined by $\alpha$. 
We have $|\tilde{\tau} (V)|\in [0,1]$ and $|\tilde{\tau} (V)| =1$ if and only if $\tilde{\alpha} =\mathrm{id}$. 
Hence if $\alpha$ is an outer action, then $|\tilde{\tau} (V)|\in [0,1)$. \\ 
(ii) The proof above shows that two extremal tracial states $\omega_1$ and $\omega_2$ on $A\rtimes_{\alpha}\mathbb{Z}_2$ are given by the restriction of $\sigma_1$ and $\sigma_2$. 
Therefore we have 
$$
\bar{\omega}_{1} (x_0+x_1\lambda ) = \bar{\tau} (x_0) + \tilde{\tau} (x_1 U_{\alpha}) \quad \mathrm{and} \quad \bar{\omega}_{2} (x_0+x_1\lambda ) = \bar{\tau} (x_0) - \tilde{\tau} (x_1 U_{\alpha}) 
$$
where $\lambda$ is the implementing unitary of $\alpha$ and $x_0,x_1\in M(A)$. 
Note that if $W$ is a unitary element in $\pi_{\tau}(A)^{''}$ such that $\tilde{\alpha} =\mathrm{Ad}(W)$ and $W^2=1$, then $W=U_{\alpha}$ or $W=-U_{\alpha}$. 
\end{rem}

We shall define a conjugacy invariant for outer actions of $\mathbb{Z}_2$ on a simple C$^*$-algebra with a unique tracial state. 
By Proposition \ref{pro:two extremal} and Remark \ref{rem:two extremal}, we see that the following definition is well-defined. 
\begin{Def}\label{def:trace-invariant}
Let $\alpha$ be an outer action of $\mathbb{Z}_2$ on a simple C$^*$-algebra $A$ with a unique tracial state $\tau$. 
Define an invariant $\epsilon (\alpha)\in [0,1]$ of $\alpha$ by $\epsilon (\alpha) =1$ if $A\rtimes_{\alpha}\mathbb{Z}_2$ has a unique tracial state and 
$\epsilon (\alpha) = | \tilde{\tau}(V)|$ if $A\rtimes_{\alpha}\mathbb{Z}_2$ has exactly two extremal tracial states where $V$ 
is a unitary element in $\pi_{\tau}(A)^{''}$ such that $\tilde{\alpha} =\mathrm{Ad} (V)$. 
\end{Def}

\subsection{Kirchberg's central sequence C$^*$-algebras}\label{subsec:kir}
We shall recall some properties of Kirchberg's central sequence C$^*$-algebras in \cite{Kir2}. See also \cite[Section 5]{Na2}. 
For a $\sigma$-unital C$^*$-algebra $A$, set 
$$
c_0(A):=\{(a_n)_{n\in\mathbb{N}}\in \ell^{\infty}(\mathbb{N}, A)\; |\; \lim_{n \to \infty}\| a_n\| =0 \}, \; 
A^{\infty}:=\ell^{\infty}(\mathbb{N}, A)/c_0(A). 
$$
Let $B$ be a C$^*$-subalgebra of $A$. 
We identify $A$ and $B$ with the C$^*$-subalgebras of $A^\infty$ consisting of equivalence classes of 
constant sequences.  Put 
$$
A_{\infty}:=A^{\infty}\cap A^{\prime},\; \mathrm{Ann}(B,A^{\infty}):=\{(a_n)_n\in A^{\infty}\cap B^{\prime}\; |\; (a_n)_nb =0
\;\mathrm{for}\;\mathrm{any}\; b\in B \}.
$$
Then $\mathrm{Ann}(B,A^{\infty})$ is an closed two-sided ideal of $A^{\infty}\cap B^{\prime}$, and define 
$$
F(A):=A_{\infty}/\mathrm{Ann}(A,A^{\infty}).
$$
We call $F(A)$ the \textit{central sequence C$^*$-algebra} of $A$. A sequence $(a_n)_n$ is said to be 
\textit{central} if $\lim_{n\to \infty}\| a_na-aa_n \| =0$ for all $a\in A$. A central sequence 
is a representative of an element in $A_{\infty}$. 
Since $A$ is $\sigma$-unital, $A$ has a countable approximate unit $\{h_n\}_{n\in\mathbb{N}}$. 
It is easy to see that $[(h_n)_n]$ is a unit in $F(A)$. If $A$ is unital, then $F(A)=A_{\infty}$. 
Note that $F(A)$ is isomorphic to $M(A)^\infty\cap A^{\prime}/\mathrm{Ann}(A,M(A)^{\infty})$ and $F(A\otimes\mathbb{K})$. 
If $\alpha$ is an automorphism of $A$, $\alpha$ induces natural automorphisms of $A^{\infty}$, $A_{\infty}$ and $F(A)$. 
We denote them by the same symbol $\alpha$ for simplicity. 
Note that if $\alpha$ is an inner automorphism of $A$, then the induced automorphism of $\alpha$ on $F(A)$ is the identity map. 

\subsection{Strict comparison and almost stable rank one}\label{sec:strict comparison}
For positive elements $a,b\in A$, we say that $a$ is \textit{Cuntz smaller than} $b$, written $a\precsim b$, if there exists a 
sequence $\{x_n\}_{n\in\mathbb{N}}$ of $A$ such that $\| x_n^*bx_n-a\|\rightarrow 0$. 
Positive elements $a$ and $b$ are said to be \textit{Cuntz equivalent}, written $a \sim b$, if 
$a\precsim b$ and $b\precsim a$. 
For $\tau\in T(A)$, put $d_{\tau} (h)=\lim_{n\rightarrow \infty}\tau\otimes\mathrm{Tr} (h^{\frac{1}{n}})$ for 
$h\in (A\otimes \mathbb{K})_{+}$. Then $d_{\tau}$ is a dimension function. 
In this paper we say that $A$ has \textit{strict comparison} if $a,b\in (A\otimes \mathbb{K})_{+}$ with $d_{\tau}(a)< d_{\tau} (b)< \infty$ for any 
$\tau \in T(A)$, then $a\precsim b$. 
A C$^*$-algebra $A$ is said to have \textit{almost stable rank one} if for every $\sigma$-unital hereditary subalgebra 
$B\subseteq A\otimes\mathbb{K}$ we have $B\subseteq \overline{\mathrm{GL}(\widetilde{B})}$. If $A$ is unital, then $A$ has stable rank one if and only if $A$ has almost stable rank one. 
We have the following proposition. See, for example, \cite[Corollary 3.3 and Proposition 3.4]{Na2}. 
\begin{pro}\label{pro:comparison positive}
Let $A$ be a simple exact $\sigma$-unital stably projectionless C$^*$-algebra, and let $a$ and $b$ be positive elements in $A$. 
Assume that $A$ has strict comparison and almost stable rank one. 
If $d_{\tau}(a)= d_{\tau} (b)< \infty$ for any $\tau\in T(A)$, then $\overline{aA}$ is isomorphic to $\overline{bA}$ as right Hilbert $A$-modules. 
\end{pro}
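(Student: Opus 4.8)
The plan is to reduce the statement to Cuntz equivalence of $a$ and $b$, and then to invoke the correspondence between Cuntz equivalence of positive elements and isomorphism of the associated Hilbert modules that is available for C$^*$-algebras with almost stable rank one. First I would dispose of the degenerate case: if $a=0$ then $d_\tau(a)=0$ for all $\tau\in T(A)$, hence $d_\tau(b)=0$ for all $\tau\in T(A)$; since $A$ is simple (so any nonzero trace is faithful on the Pedersen ideal) and $T(A)\neq\emptyset$ (a simple exact stably projectionless C$^*$-algebra is stably finite, hence carries a quasitrace, which is a trace by Haagerup's theorem), this forces $b=0$, whence $\overline{aA}=0=\overline{bA}$. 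So we may assume $a\neq 0$ and $b\neq 0$.

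The key step is to prove $a\sim b$. Because $A$ is stably projectionless, $0$ is a limit point of the spectrum of every nonzero positive element $c\in A$: otherwise a spectral projection of $c$ would be a nonzero projection lying in the hereditary subalgebra $\overline{cAc}\subseteq A$. In particular $b$ is purely positive, i.e. the class of $b$ in the Cuntz semigroup of $A$ is not the class of a projection. Fix $\varepsilon>0$. Since the support projection of $(a-\varepsilon)_+$ is dominated by that of $a$, we have $d_\tau((a-\varepsilon)_+)\le d_\tau(a)=d_\tau(b)$ for every $\tau\in T(A)$. Combining strict comparison with the standard refinement that Cuntz subequivalence to a purely positive element requires only a non-strict inequality of dimension functions (see R\o rdam, Brown--Perera--Toms, Elliott--Robert--Santiago), we conclude $(a-\varepsilon)_+\precsim b$. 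Letting $\varepsilon\to 0$ yields $a\precsim b$, and by symmetry $b\precsim a$, so $a\sim b$.

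Finally, since $A$ has almost stable rank one, Cuntz equivalence of positive elements implies isomorphism of the associated right Hilbert $A$-modules; hence $a\sim b$ gives $\overline{aA}\cong\overline{bA}$. For algebras of stable rank one this is the theorem of Coward--Elliott--Ivanescu, and its extension to algebras with almost stable rank one is due to Robert. This completes the proof.

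The step I expect to be the main obstacle is the passage from the equality $d_\tau(a)=d_\tau(b)$ to the Cuntz subequivalence $a\precsim b$: the hypothesis provides only an equality of dimension functions, whereas strict comparison as formulated accepts a strict inequality as input, so one genuinely has to exploit stable projectionlessness — through the purely-positive comparison lemma — to bridge the gap. The reduction in the first paragraph and the appeal to the Cuntz-semigroup/Hilbert-module dictionary in the third are comparatively routine.
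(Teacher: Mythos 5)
Your overall strategy --- first prove $a\sim b$, then invoke the equivalence between Cuntz equivalence of positive elements and isomorphism of $\overline{aA}$ and $\overline{bA}$ for algebras of almost stable rank one --- is exactly the route the paper intends: it gives no proof of its own and cites \cite{Na2} (Corollary 3.3 and Proposition 3.4), which split the argument into precisely these two steps, and your final step and the degenerate case are fine. The genuine problem is the lemma you use to bridge from the non-strict trace inequality to $(a-\varepsilon)_+\precsim b$: you state it as ``Cuntz subequivalence \emph{to} a purely positive element requires only a non-strict inequality of dimension functions,'' i.e.\ with the pure positivity hypothesis on the dominating element $b$. In that generality the statement is false. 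In the Jiang--Su algebra $\mathcal{Z}$ take $a=1$ and a positive $b$ whose spectrum has $0$ as a non-isolated point and whose spectral distribution with respect to the unique trace has no atom at $0$; then $d_\tau(1)=1=d_\tau(b)$ and $b$ is purely positive, yet $1\precsim b$ would produce $y$ with $y^*by=1$, so $b^{1/2}y$ is an isometry, hence a unitary by stable finiteness of $\mathcal{Z}$, forcing $b$ to be invertible --- a contradiction. So the softness hypothesis must sit on the \emph{dominated} element, not on $b$, and as written your appeal to ``strict comparison plus the standard refinement'' does not justify $(a-\varepsilon)_+\precsim b$.

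Fortunately the repair is immediate in this setting and is what the cited argument amounts to: since $A$ is stably projectionless, $0$ is a limit point of the spectrum of $a$ as well (your own observation, applied to $a$ instead of $b$). Given $\varepsilon>0$, choose $g$ continuous, strictly positive on $(0,\varepsilon]$ and vanishing elsewhere; then $g(a)\neq 0$, $g(a)\perp (a-\varepsilon)_+$, and $g(a)+(a-\varepsilon)_+$ generates the same hereditary subalgebra as $a$, so $d_\tau(a)=d_\tau((a-\varepsilon)_+)+d_\tau(g(a))$ for every $\tau\in T(A)$. Every nonzero densely defined lower semicontinuous trace on the simple algebra $A$ is faithful, so $d_\tau(g(a))>0$, whence $d_\tau((a-\varepsilon)_+)<d_\tau(a)=d_\tau(b)<\infty$ for all $\tau$. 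Now strict comparison, exactly as defined in the paper, gives $(a-\varepsilon)_+\precsim b$; letting $\varepsilon\to 0$ yields $a\precsim b$, and by symmetry $a\sim b$. After this correction your concluding appeal to Robert's result that, under almost stable rank one, Cuntz equivalence implies isomorphism of the Hilbert modules $\overline{aA}$ and $\overline{bA}$ completes the proof as intended.
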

We shall consider the comparison theory for projections in multiplier algebras of certain stably projectionless C$^*$-algebras. 
\begin{pro}\label{pro:comparison multiplier}
Let $A$ be a simple exact $\sigma$-unital stably projectionless C$^*$-algebra, and let $p$ and $q$ be projections in $M(A)$. 
Assume that $A$ has strict comparison and almost stable rank one. 
If $\bar{\tau}(p)=\bar{\tau}(q)< \infty$ for any $\tau\in T(A)$, then $p$ is Murray-von Neumann equivalent to $q$ in $M(A)$. 
\end{pro}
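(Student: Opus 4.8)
The plan is to reformulate the statement in terms of Hilbert $A$-modules, apply Proposition~\ref{pro:comparison positive}, and translate back. Recall that $M(A)=\mathcal{L}(A_A)$, the adjointable operators on $A$ regarded as a right Hilbert $A$-module, and that for projections $p,q\in M(A)$ one has that $p$ is Murray--von Neumann equivalent to $q$ in $M(A)$ if and only if the complemented submodules $pA$ and $qA$ are isomorphic as right Hilbert $A$-modules: given a unitary $\phi\colon pA\to qA$, the operator $v\in\mathcal{L}(A_A)$ given by $v=\phi$ on $pA$ and $v=0$ on $(1-p)A$ is adjointable with $v^{*}v=p$ and $vv^{*}=q$, and the converse is immediate. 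Thus it suffices to show $pA\cong qA$.

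First I would fix a strictly positive element $s\in A$; since $\{s^{1/n}\}_n$ is an approximate unit one checks that $\overline{sA}=A$, and hence $\overline{psA}=pA$ and $\overline{qsA}=qA$. The elements $x:=ps$ and $y:=qs$ lie in $A$ (as $p,q\in M(A)$), and the Hilbert-module polar decomposition --- the map $|x|a\mapsto xa$ extends to a unitary $\overline{|x|A}\xrightarrow{\cong}\overline{xA}$ --- gives
$$
\overline{(sps)A}=\overline{(x^{*}x)A}=\overline{|x|A}\cong\overline{xA}=\overline{psA}=pA,
$$
and similarly $\overline{(sqs)A}\cong qA$, where now $sps=x^{*}x$ and $sqs=y^{*}y$ are \emph{positive elements of $A$}. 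So everything reduces to the identity
$$
d_\tau(sps)=\bar\tau(p)\qquad\text{for every }\tau\in T(A),
$$
and its analogue for $q$: granting these, the hypothesis $\bar\tau(p)=\bar\tau(q)<\infty$ gives $d_\tau(sps)=d_\tau(sqs)<\infty$ for all $\tau$, Proposition~\ref{pro:comparison positive} yields $\overline{(sps)A}\cong\overline{(sqs)A}$, and combining with the displayed isomorphisms we get $pA\cong qA$.

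For the trace identity, I would use that $sps=x^{*}x$ and $ps^{2}p=xx^{*}$ are Cuntz equivalent, so $d_\tau(sps)=d_\tau(ps^{2}p)$; it then remains to compute $d_\tau(ps^{2}p)=\bar\tau(p)$. Passing to the GNS representation $\pi_\tau$ (extended to $M(A)$ on the same Hilbert space, using that $\pi_\tau$ is nondegenerate on $A$), the positive contractions $(ps^{2}p)^{1/k}$ increase strongly to the support projection $E$ of $\pi_\tau(ps^{2}p)=\pi_\tau(ps)\pi_\tau(ps)^{*}$; since $s$ is strictly positive, $\pi_\tau(s)$ is injective, so $\ker\pi_\tau(ps^{2}p)=\ker\pi_\tau(sp)=(1-\pi_\tau(p))H_\tau$, whence $E=\pi_\tau(p)$, and normality of the extended trace gives $d_\tau(ps^{2}p)=\bar\tau(p)$. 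For an unbounded $\tau\in T(A)$ (where $\bar\tau(p)<\infty$ confines the relevant computation to a corner of finite trace) the same conclusion follows from strict lower semicontinuity of the canonical extension of $\tau$ together with $(ps^{2}p)^{1/k}\nearrow p$ strictly in $M(A)$, which holds because the range projection of $ps^{2}p$ in $M(A)$ is $p$.

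The main obstacle is exactly this reconciliation of the multiplier projection $p$ with Proposition~\ref{pro:comparison positive}, which speaks only about positive elements of $A$: one must produce $a\in A_{+}$ with $\overline{aA}\cong pA$ and, crucially, with $d_\tau(a)=\bar\tau(p)$ for every lower semicontinuous trace $\tau$ --- the verification of the latter (the support-projection/strict-limit argument above) being the only place where genuine analysis, as opposed to Hilbert-module bookkeeping, enters.
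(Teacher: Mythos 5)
Your argument is correct and follows essentially the same route as the paper: both reduce to Proposition \ref{pro:comparison positive} by producing positive elements of $A$ whose associated right Hilbert $A$-modules are (isomorphic to) $pA$ and $qA$ and whose values of $d_\tau$ coincide with $\bar{\tau}(p)$ and $\bar{\tau}(q)$. The paper simply takes a strictly positive element $a$ of the $\sigma$-unital corner $pAp$, for which $\overline{aA}=pA$ and $d_\tau(a)=\bar{\tau}(p)$ follow from the approximate unit $\{a^{1/n}\}$, whereas you realize the same reduction concretely via $sps$ and the polar decomposition of $ps$ --- a cosmetic difference.
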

\begin{proof}
Consider right Hilbert $A$-modules $pA$ and $qA$. Since $pAp$ and $qAq$ are $\sigma$-unital, there exist positive elements $a$ and $b$ in $A$ 
such that $\overline{aA}=pA$ and $\overline{bA}=qA$. Note that $\{a^{\frac{1}{n}}\}_{n\in\mathbb{N}}$ and $\{b^{\frac{1}{n}}\}_{n\in\mathbb{N}}$ are approximate units 
for $pAp$ and $qAq$, respectively. Hence we have $\bar{\tau}(p)= d_{\tau}(a)$ and $\bar{\tau}(q)= d_{\tau}(b)$ for any $\tau\in T(A)$. 
By Proposition \ref{pro:comparison positive}, we see that $pA$ is isomorphic to $qA$ as right Hilbert $A$-modules. 
Therefore $p$ is Murray-von Neumann equivalent to $q$ in $M(A)$. 
\end{proof}

We denote by $\mathcal{Z}$ the Jiang-Su algebra constructed in \cite{JS}. 
The Jiang-Su algebra $\mathcal{Z}$ is a unital separable simple infinite-dimensional nuclear 
C$^*$-algebra whose K-theoretic invariant is isomorphic to that of complex numbers. 
A C$^*$-algebra $A$ is said to be $\mathcal{Z}$-\textit{stable} if $A$ is isomorphic to $A\otimes\mathcal{Z}$. 
If $A$ is a simple exact $\mathcal{Z}$-stable C$^*$-algebra with traces, then $A$ has strict comparison (see \cite[Theorem 4.5]{Ror}). 
Robert showed that if $A$ is a simple $\sigma$-unital $\mathcal{Z}$-stable stably projectionless C$^*$-algebra, then $A$ has almost stable rank one. 
Therefore we have the following corollary. 
\begin{cor}
Let $A$ be a simple exact $\sigma$-unital stably projectionless $\mathcal{Z}$-stable C$^*$-algebra, and let $p$ and $q$ be projections in $M(A)$. 
If $\bar{\tau}(p)=\bar{\tau}(q)< \infty$ for any $\tau\in T(A)$, then $p$ is Murray-von Neumann equivalent to $q$ in $M(A)$. 
\end{cor}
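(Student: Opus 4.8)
The plan is to reduce the statement to Proposition \ref{pro:comparison multiplier} by checking that $A$ enjoys both of the hypotheses appearing there, namely strict comparison and almost stable rank one, and then to apply that proposition verbatim. For strict comparison I would invoke \cite[Theorem 4.5]{Ror}: $A$ is simple, exact and $\mathcal{Z}$-stable, so once one knows that $A$ admits a (densely defined, lower semicontinuous) trace, strict comparison follows. That $A$ carries such a trace is automatic here, since a simple $\sigma$-unital stably projectionless C$^*$-algebra is stably finite and hence possesses a nonzero densely defined lower semicontinuous trace; in particular $T(A)\neq \emptyset$ and the hypothesis $\bar{\tau}(p)=\bar{\tau}(q)<\infty$ for all $\tau\in T(A)$ is not vacuous.

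For almost stable rank one I would quote Robert's theorem, already recalled in the paragraph immediately preceding the corollary: every simple $\sigma$-unital $\mathcal{Z}$-stable stably projectionless C$^*$-algebra has almost stable rank one. With both properties established, $A$ satisfies all the assumptions of Proposition \ref{pro:comparison multiplier}. Applying that proposition to the projections $p,q\in M(A)$, the hypothesis $\bar{\tau}(p)=\bar{\tau}(q)<\infty$ for every $\tau\in T(A)$ gives that $p$ is Murray--von Neumann equivalent to $q$ in $M(A)$, which is precisely the assertion.

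There is essentially no genuine obstacle: the corollary is a repackaging of Proposition \ref{pro:comparison multiplier} together with the two cited structure theorems (Rørdam's for strict comparison, Robert's for almost stable rank one). The only point that deserves a sentence of care, as noted above, is the trace hypothesis needed to invoke \cite[Theorem 4.5]{Ror}, which is handled by the stable finiteness coming from stable projectionlessness.
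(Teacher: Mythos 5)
Your argument is exactly the paper's: the corollary is stated as an immediate consequence of Proposition \ref{pro:comparison multiplier} once strict comparison is supplied by \cite[Theorem 4.5]{Ror} and almost stable rank one by Robert's result, precisely as in the paragraph preceding the corollary. Your additional remark that $T(A)\neq\emptyset$ (via stable finiteness and exactness) is a reasonable extra word of care but does not change the route.
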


\subsection{Razak-Jacelon algebra and Robert's classification theorem}
Let $\mathcal{W}_{2}$ be the Razak-Jacelon algebra studied in \cite{J}, which has trivial K-groups and a unique tracial state and no unbounded traces. 
The Razak-Jacelon algebra $\mathcal{W}_{2}$ is constructed as an inductive limit C$^*$-algebra of Razak's building block in \cite{Raz}, that is,  
$$
A(n,m)= \left\{f\in C([0,1])\otimes M_m(\mathbb{C}) \ | \
\begin{array}{cc} 
f(0)=\mathrm{diag}(\overbrace{c,..,c}^k,0_{n}), 
f(1)=\mathrm{diag}(\overbrace{c,..,c}^{k+1}),  \\
c\in M_n(\mathbb{C})
\end{array} 
\right\}
$$
where $n$ and $m$ are natural numbers with $n|m$ and $k:=\frac{m}{n}-1$. 
Let $\mathcal{O}_{2}$ denote the Cuntz algebra generated by $2$ isometries 
$S_1$ and $S_2$. 
For every $\lambda_1,\lambda_2\in\mathbb{R}$ there exists by universality a one-parameter 
automorphism group $\alpha$ of $\mathcal{O}_2$ given by $\alpha_t (S_j)=e^{it\lambda_{j}}S_j$. 
Kishimoto and Kumjian showed that if 
$\lambda_{1}$ and $\lambda_{2}$ are all nonzero of the same sign and 
$\lambda_1$ and $\lambda_2$ generate $\mathbb{R}$ as a closed subgroup, then 
$\mathcal{O}_2\rtimes_{\alpha}\mathbb{R}$ is a simple stably projectionless C$^*$-algebra 
with unique (up to scalar multiple) densely defined lower semicontinuous trace 
in \cite{KK1} and \cite{KK2}. 
Moreover Robert \cite{Rob} showed that $\mathcal{W}_{2}\otimes \mathbb{K}$ is isomorphic to 
$\mathcal{O}_2\rtimes_{\alpha}\mathbb{R}$ for some $\lambda_1$ and $\lambda_2$. 
(See also \cite{Dean}.) 

We say that $A$ is a \textit{1-dimensional non-commutative CW  (NCCW) complex} if $A$ is a pullback C$^*$-algebra of the form 
\[\begin{CD}
      A @>\pi_2>>   E \\
             @VV\pi_1V      @VV\rho V  \\                 
      C([0,1])\otimes F  @>\delta_0\oplus\delta_1>> F \oplus F
\end{CD} \]
where $E$ and $F$ are finite-dimensional C$^*$-algebras and $\delta_i$ is the evaluation map at $i$. Razak's building block $A(n,m)$ 
is a 1-dimensional NCCW complex. 
Robert classified inductive limit C$^*$-algebras of 1-dimensional NCCW complexes with trivial $K_1$-groups 
(and C$^*$-algebras that are stably isomorphic to such inductive limits) in \cite{Rob}. 
In particular, if $A$ is a simple stably projectionless C$^*$-algebra, then the classification invariant of $A$ is the 3-tuple 
$(K_0(A), (T(A), T_1(A)), r_A)$ where $r_A$ is the pairing between $K_0(A)$ and $T(A)$ (see \cite[Proposition 6.2.3 and Corollary 6.2.4]{Rob}). 
For a homomorphism $\phi$ from $A$ to $B$, we denote by $K_0(\phi)$ and $T(\phi)$ the induced homomorphism from $K_0(A)$ to $K_0(B)$ by $\phi$ 
and the induced map from $T(B)$ to $T(A)$ by $\phi$, respectively. 
The following theorem is an immediate consequence of Robert's classification theorem (see \cite[Theorem 1.0.1, Proposition 3.1.7, Proposition 6.2.3 and Corollary 6.2.4]{Rob}). 
\begin{thm}\label{thm:Robert}
(Robert) \\
Let $A$ be a simple stably projectionless C$^*$-algebra that is stably isomorphic to one inductive limit C$^*$-algebra of 1-dimensional NCCW complexes with trivial 
$K_1$-groups. \\
(i) If $\alpha$ and $\beta$ are automorphisms of $A$, then $\alpha$ is approximately unitarily equivalent to $\beta$ if and only if $K_0(\alpha )=K_0(\beta )$ and $T(\alpha)=T(\beta)$.  \\
(ii) Let $\varphi$ be an automorphism of a countable abelian group $K_0(A)$ and $\gamma$ an automorphism of a topological cone $T(A)$ with $\gamma (T_1(A))=T_1(A)$. 
If $\varphi$ and $\gamma$ are compatible with the pairing $r_A$, then there exists an automorphism $\alpha$ of $A$ such that $K_0(\alpha) =\varphi$ and $T(\alpha)=\gamma$. 
\end{thm}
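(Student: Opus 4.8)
The plan is to read off both parts from Robert's classification of $^*$-homomorphisms between $\sigma$-unital C$^*$-algebras that are stably isomorphic to inductive limits of $1$-dimensional NCCW complexes with trivial $K_1$ (\cite[Theorem 1.0.1 and Proposition 3.1.7]{Rob}), after identifying the classifying invariant in the simple stably projectionless case with the $3$-tuple $(K_0(A),(T(A),T_1(A)),r_A)$ via \cite[Proposition 6.2.3 and Corollary 6.2.4]{Rob}. Since $A$ is simple, every endomorphism of $A$ is full, so Robert's uniqueness theorem classifies such maps up to approximate unitary equivalence by the induced morphism of the (stabilized) Cuntz semigroup, which by \cite[Corollary 6.2.4]{Rob} amounts to the pair consisting of a $K_0$-endomorphism and a trace map, subject to compatibility with the pairing $r_A$; the pairing compatibility is automatic for maps induced by automorphisms. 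There is a harmless passage between $A$ and $A\otimes\mathbb{K}$ here: an automorphism $\alpha$ of $A$ corresponds to $\alpha\otimes\mathrm{id}_{\mathbb{K}}$, and approximate unitary equivalence transfers in both directions because $A$ is a full hereditary subalgebra of $A\otimes\mathbb{K}$.

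For (i), the forward direction is immediate, since an approximately inner automorphism acts as the identity on $K_0$ and on $T(A)$, so if $\alpha$ is approximately unitarily equivalent to $\beta$ then $K_0(\alpha)=K_0(\beta)$ and $T(\alpha)=T(\beta)$. Conversely, if $K_0(\alpha)=K_0(\beta)$ and $T(\alpha)=T(\beta)$, then $\alpha\circ\beta^{-1}$ induces the identity on $(K_0(A),(T(A),T_1(A)),r_A)$, hence the identity morphism on the Cuntz semigroup of $A$ by \cite[Corollary 6.2.4]{Rob}; since $\mathrm{id}_A$ induces the same morphism, \cite[Theorem 1.0.1]{Rob} applied to the two full endomorphisms $\alpha\circ\beta^{-1}$ and $\mathrm{id}_A$ of $A$ gives that $\alpha\circ\beta^{-1}$ is approximately unitarily equivalent to $\mathrm{id}_A$, i.e. approximately inner.

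For (ii), given compatible automorphisms $\varphi$ of $K_0(A)$ and $\gamma$ of the cone $T(A)$ with $\gamma(T_1(A))=T_1(A)$, the existence part of Robert's classification (\cite[Theorem 1.0.1 and Proposition 6.2.3]{Rob}) yields full $^*$-homomorphisms $\alpha,\beta: A\to A$ realizing the invariant morphisms attached to $(\varphi,\gamma)$ and to $(\varphi^{-1},\gamma^{-1})$. By functoriality of the invariant, $\alpha\circ\beta$ and $\beta\circ\alpha$ realize the identity morphism, so by the argument of (i) each is approximately unitarily equivalent to $\mathrm{id}_A$. A two-sided approximate intertwining argument (Elliott's intertwining applied to $\alpha$ and $\beta$) then produces an automorphism $\theta$ of $A$ that is approximately unitarily equivalent to $\alpha$; by part (i), $\theta$ satisfies $K_0(\theta)=\varphi$ and $T(\theta)=\gamma$.

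The bulk of the argument is bookkeeping with Robert's results; the only steps requiring genuine care are checking that the Cuntz-semigroup invariant collapses to $(K_0(A),(T(A),T_1(A)),r_A)$ for simple stably projectionless $A$ — which is precisely \cite[Proposition 6.2.3 and Corollary 6.2.4]{Rob} — and the intertwining step in (ii) upgrading the pair $\alpha,\beta$ to a genuine automorphism. I expect the latter to be the main (though still routine) point.
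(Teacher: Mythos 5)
Your proposal is correct and follows essentially the same route as the paper, which simply cites Robert's classification (\cite[Theorem 1.0.1, Proposition 3.1.7, Proposition 6.2.3 and Corollary 6.2.4]{Rob}) and treats the statement as an immediate consequence. You merely make explicit the routine bookkeeping (full endomorphisms, the passage between $A$ and $A\otimes\mathbb{K}$, and the two-sided intertwining in (ii)) that the paper leaves implicit.
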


Robert's classification theorem shows that if $A$ is a simple approximately finite dimensional (AF) algebra with a unique tracial state and no unbounded traces, then 
$A\otimes\mathcal{W}_2$ is isomorphic to $\mathcal{W}_2$. 
In general, Robert conjectured that if $A$ and $B$ are separable nuclear C$^*$-algebras, then $T(A)$ is isomorphic to $T(B)$ as 
non-cancellative topological cones if and only if $A\otimes \mathcal{W}_2\otimes \mathbb{K}$ is isomorphic to $B\otimes \mathcal{W}_2\otimes \mathbb{K}$ 
(see \cite{San}). 

Note that every simple inductive limit C$^*$-algebra of 1-dimensional NCCW complexes has strict comparison and stable rank one. 
Moreover these C$^*$-algebras are $\mathcal{Z}$-stable by \cite[Corollary 9.2]{Tik}. 

\section{The Rohlin property}\label{sec:Rohlin}
In this section we introduce the Rohlin property for finite group actions on $\sigma$-unital C$^*$-algebras and study 
some basic properties of finite group actions with the Rohlin property. 

\begin{Def}
An action $\alpha$ of a finite group $G$ on a $\sigma$-unital C$^*$-algebra $A$ is said to have the \textit{Rohlin property} 
if there exists a partition of unity $\{e_{g}\}_{g\in G}\subset F(A)$ consisting of projections satisfying 
$$
\alpha_{g} (e_{h}) =e_{gh}
$$
for any $g,h\in G$. A family of projections $\{e_{g}\}_{g\in G}$ is called \textit{Rohlin projections} of $\alpha$. 
\end{Def}
If $A$ is unital, then the definition above coincides with the definition in \cite{I1}. 

\begin{ex}\label{ex:model rohlin}
Let $G$ be a finite group, and let $\mu^{G}$ be the action of $G$ on $M_{|G|^{\infty}}$ in \cite[Example 3.2]{I1}, that is, 
$$
\mu^{G}_g =\bigotimes_{n=1}^\infty \mathrm{Ad}(\lambda_g)
$$ 
for any $g\in G$ where $\lambda_g$ is the left regular representation of $G$ and we identify $B(\ell^{2} (G))$ with $M_{|G|}(\mathbb{C})$. 
Define an action $\nu^{G}$ of $G$ on $M_{|G|^{\infty}}\otimes \mathcal{W}_2\cong \mathcal{W}_2$ by 
$$
\nu^{G} :=\mu^{G}\otimes \mathrm{id}.
$$
Then $\nu^{G}$ has the Rohlin property. Indeed, let $\{h_n \}_{n\in\mathbb{N}}$ be an approximate unit for $\mathcal{W}_2$. Since $\mu^{G}$ has the 
Rohlin property (see \cite{I1}), there exist Rohlin projections $\{(p_{g,n})_n\}_{g\in G}$ of $\mu^{G}$ in $(M_{|G|^\infty})_{\infty}$. 
Put $e_{g}:=[(p_{g,n}\otimes h_n)_n]\in F(M_{|G|^{\infty}}\otimes \mathcal{W}_2)$ for any $g\in G$. Then we see that $\{e_g\}_{g\in G}$ are Rohlin 
projections of $\nu^{G}$. 
\end{ex}
The example above shows that for any finite group $G$, there exists an action of $G$ on $\mathcal{W}_2$ with the Rohlin property. 
Moreover we see that $\mathcal{W}_2\rtimes_{\nu^{G}} G$ is isomorphic to $\mathcal{W}_2$ because $M_{|G|^{\infty}}\rtimes_{\mu^{G}}G$ is a simple unital AF algebra with a 
unique tracial state. 

If $G$ is a finite abelian group, then we have the following characterization of the Rohlin property. 
\begin{pro}\label{pro:rohlin unitary}
Let $\alpha$ be an action of a finite abelian group $G$ on a $\sigma$-unital C$^*$-algebra $A$. 
Then $\alpha$ has the Rohlin property if and only if there exists a unitary representation $u$ of $\hat{G}$ on $F(A)$ such that 
$$
\alpha_{g} (u(\gamma))= \gamma (g) u (\gamma)
$$
for any $g\in G$ and $\gamma\in \hat{G}$. 
\end{pro}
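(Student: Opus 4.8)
The plan is to use the Fourier transform over the finite abelian group $G$ to pass back and forth between partitions of unity indexed by $G$ and unitary representations of the Pontryagin dual $\hat G$. Recall from Subsection \ref{subsec:kir} that $F(A)$ is unital and that $\alpha$ induces an action on $F(A)$, so all computations below take place in a unital C$^*$-algebra and involve only finite sums; in particular $\alpha_g$, being a $^*$-automorphism, commutes with the finite linear combinations that appear. The key elementary input is the orthogonality of characters of a finite abelian group: $\sum_{\gamma\in\hat G}\gamma(g)=|G|$ if $g=\iota$ and $0$ otherwise, and dually $\sum_{h\in G}\gamma(h)=|G|$ if $\gamma$ is trivial and $0$ otherwise.

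For the forward implication, suppose $\{e_g\}_{g\in G}$ are Rohlin projections of $\alpha$. I would set
$$u(\gamma):=\sum_{h\in G}\overline{\gamma(h)}\,e_h\qquad(\gamma\in\hat G).$$
Since the $e_h$ are mutually orthogonal projections with $\sum_h e_h=1$, one gets $u(\gamma)^*u(\gamma)=u(\gamma)u(\gamma)^*=\sum_h|\gamma(h)|^2e_h=1$, so each $u(\gamma)$ is unitary, and $u(\gamma_1)u(\gamma_2)=\sum_h\overline{(\gamma_1\gamma_2)(h)}\,e_h=u(\gamma_1\gamma_2)$ with $u$ of the trivial character equal to $1$; thus $u$ is a unitary representation of $\hat G$ on $F(A)$. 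Using $\alpha_g(e_h)=e_{gh}$ and reindexing by $k=gh$,
$$\alpha_g(u(\gamma))=\sum_{h\in G}\overline{\gamma(h)}\,e_{gh}=\sum_{k\in G}\overline{\gamma(g^{-1}k)}\,e_k=\overline{\gamma(g^{-1})}\,u(\gamma)=\gamma(g)\,u(\gamma),$$
which is the required covariance relation.

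For the converse, given such a unitary representation $u$ I would recover the projections by the inverse Fourier transform,
$$e_g:=\frac{1}{|G|}\sum_{\gamma\in\hat G}\gamma(g)\,u(\gamma)\qquad(g\in G).$$
From $u(\gamma)^*=u(\gamma^{-1})$ one checks $e_g^*=e_g$; expanding the product $e_ge_h$, substituting $\epsilon=\gamma\delta$, and applying character orthogonality gives $e_ge_h=\delta_{g,h}\,e_g$, so the $e_g$ are mutually orthogonal projections; and $\sum_{g\in G}e_g=\frac{1}{|G|}\sum_{\gamma\in\hat G}u(\gamma)\sum_{g\in G}\gamma(g)=u(1)=1$ is the unit of $F(A)$. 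Finally, the hypothesis $\alpha_g(u(\gamma))=\gamma(g)u(\gamma)$ together with commutativity of $G$ yields
$$\alpha_g(e_h)=\frac{1}{|G|}\sum_{\gamma\in\hat G}\gamma(h)\gamma(g)\,u(\gamma)=\frac{1}{|G|}\sum_{\gamma\in\hat G}\gamma(gh)\,u(\gamma)=e_{gh},$$
so $\{e_g\}_{g\in G}$ are Rohlin projections. By character orthogonality the two constructions are mutually inverse, so nothing further is needed.

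As for difficulty: there is essentially no analytic obstacle, since $F(A)$ is unital and the content is purely the finite-group Fourier duality. The only points that need a little care are fixing the conjugation convention in the definition of $u(\gamma)$ so that the covariance relation comes out as $\gamma(g)u(\gamma)$ rather than $\overline{\gamma(g)}\,u(\gamma)$, and verifying that the two passages are genuine inverses of each other — both of which reduce to the orthogonality relations for characters of $G$ and $\hat G$.
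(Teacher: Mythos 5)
Your proof is correct and follows exactly the paper's approach: the paper defines $u(\gamma)=\sum_{g\in G}\overline{\gamma(g)}e_g$ for one direction and $e_g=\frac{1}{|G|}\sum_{\gamma\in\hat G}\gamma(g)u(\gamma)$ for the converse, leaving the character-orthogonality verifications to the reader, which you have simply written out in full.
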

\begin{proof}
Assume that $\alpha$ has the Rohlin property, and choose Rohlin projections $\{e_g \}_{g\in G}$ of $\alpha$. 
Define a map $u$ from $\hat{G}$ to $F(A)$ by 
$$
u(\gamma ):=\sum_{g\in G}\overline{\gamma (g)}e_g
$$
for any $\gamma\in \hat{G}$. Then $u$ is a unitary representation such that $\alpha_{g} (u(\gamma))= \gamma (g) u (\gamma)$ for any $g\in G$ and $\gamma\in \hat{G}$. 

Conversely, assume that there exists a unitary representation $u$ of $\hat{G}$ on $F(A)$ such that $\alpha_{g} (u(\gamma))= \gamma (g) u (\gamma)$ for any $g\in G$ and $\gamma\in \hat{G}$. 
For any $g\in G$, put 
$$
e_g:=\frac{1}{|G|}\sum_{\gamma\in\hat{G}}\gamma (g)u(\gamma).
$$
Then $\{e_g\}_{g\in G}$ are Rohlin projections of $\alpha$. 
\end{proof}

The following lemma is an analogous result of \cite[Lemma 3.3]{I1} in our case. 
\begin{lem}\label{lem:key rohlin}
Let $A$ be a $\sigma$-unital C$^*$-algebra with almost stable rank one, and let $\alpha$ and $\beta$ be actions of a finite group $G$ on $A$ with 
the Rohlin property. Assume that $\alpha_g$ and $\beta_g$ are approximately unitarily equivalent for any $g\in G$. 
Then for every finite set $F\subset A$ and positive number $\epsilon$, there exists a unitary element $v$ in $\tilde{A}$ such that the following hold for 
any $x\in F$: 
$$
\|\beta_g(x)-\mathrm{Ad}(v^*)\circ \alpha_g\circ \mathrm{Ad}(v)(x) \| <\epsilon , \quad g\in G,
$$
$$
\| [v,x] \| <\epsilon +\sup_{g\in G}\| \beta_g(x)-\alpha_g(x) \| .
$$
\end{lem}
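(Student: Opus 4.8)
The plan is to follow the argument of \cite[Lemma 3.3]{I1}, inserting the modifications forced by non-unitality.

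\emph{Setup and candidate.} Since $\alpha_g$ and $\beta_g$ are approximately unitarily equivalent, I would fix a small $\delta>0$, enlarge $F$ in finitely many steps to a self-adjoint finite set $F_1$ that is stable under all the automorphisms $\alpha_g,\beta_g,\alpha_g\beta_g^{-1}$ ($g\in G$) appearing below, and choose unitaries $w_g$ (in $M(A)$) with $w_\iota=1$ and $\|\mathrm{Ad}(w_g)\circ\alpha_g(y)-\beta_g(y)\|<\delta$ for $y\in F_1$; thus $\mathrm{Ad}(w_g)$ agrees with $\beta_g\circ\alpha_g^{-1}$ on $F_1$ up to $\delta$. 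Next I would pick Rohlin projections $\{e_g\}_{g\in G}$ of $\beta$ in $F(A)$ and lift them, via $F(A)=A_\infty/\mathrm{Ann}(A,A^\infty)$, to a central sequence $(e_{g,n})_n$ of pairwise orthogonal positive contractions in $A$ with $(\beta_g(e_{h,n})-e_{gh,n})_n\in\mathrm{Ann}(A,A^\infty)$, and with $p_n:=\sum_{g\in G}e_{g,n}$ representing $1_{F(A)}=[(h_n)_n]$ for an approximate unit $\{h_n\}$ of $A$. The candidate is
$$v_n:=\sum_{g\in G}\beta_g(w_{g^{-1}})\,e_{g,n}\in A.$$

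\emph{The two estimates at the sequence level.} Using centrality of $(e_{g,n})_n$, the exact orthogonality of the $e_{g,n}$, and the fact that elements of $A$ absorb $\mathrm{Ann}(A,A^\infty)$ under multiplication, one gets $[v_n,x]\approx\sum_{g}[\beta_g(w_{g^{-1}}),x]\,e_{g,n}$, whence $\limsup_n\|[v_n,x]\|\le\max_g\|[\beta_g(w_{g^{-1}}),x]\|$. The twist by $\beta_g$ is chosen precisely so that $\|[\beta_g(w_{g^{-1}}),x]\|=\|[w_{g^{-1}},\beta_{g^{-1}}(x)]\|$ and then, by the approximation property of $w_{g^{-1}}$ and the fact that $\alpha_g,\beta_{g^{-1}}$ are isometric with $\alpha_g\circ\alpha_{g^{-1}}=\mathrm{id}=\beta_g\circ\beta_{g^{-1}}$,
$$\|[\beta_g(w_{g^{-1}}),x]\|\le\|\alpha_g(\beta_{g^{-1}}(x))-x\|+\delta=\|\alpha_{g^{-1}}(x)-\beta_{g^{-1}}(x)\|+\delta;$$
re-indexing yields $\limsup_n\|[v_n,x]\|\le\sup_{g\in G}\|\alpha_g(x)-\beta_g(x)\|+\delta$. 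For the intertwining estimate I would expand $v_n^*\,\alpha_g(v_nxv_n^*)\,v_n$ and move the projections $\alpha_g(e_{h,n})\approx e_{gh,n}$ (valid modulo $\mathrm{Ann}(A,A^\infty)$, using $\mathrm{Ad}(w_g^*)\circ\beta_g\approx\alpha_g$ on $F_1$) to the outside; the contribution living on $e_{gh,n}$ has the form $D\,\beta_g(x)\,D^*$, where $D$ is a product of three of the unitaries $\beta_{\bullet}(w_{\bullet})$, and computing $\mathrm{Ad}(D)$ from $\mathrm{Ad}(w_c)\approx\beta_c\circ\alpha_c^{-1}$ shows that the subscripts of the $\alpha$'s that appear multiply to $\iota$, so $\mathrm{Ad}(D)$ is the identity on $F_1$ up to $O(\delta)$. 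Hence $v_n^*\alpha_g(v_nxv_n^*)v_n\approx\beta_g(x)p_n\approx\beta_g(x)$, with error $O(\delta)$ plus terms tending to $0$ in $n$. Choosing $\delta$ small, both estimates hold for $(v_n)_n$ to within $\epsilon/2$.

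\emph{Passing to an honest unitary.} This is the one genuinely new point compared with the unital case: $v_n$ is an approximate unitary only modulo the annihilator, since $v_n^*v_n$ and $v_nv_n^*$ are close to $p_n$ and $p_n$ represents $1_{F(A)}$, which is merely an approximate unit of $A$. I would first replace $v_n$ by $v_n':=v_n+(1-p_n)^{1/2}\in\tilde A$, which still satisfies $\|(v_n')^*v_n'x-x\|\to0$, $\|v_n'(v_n')^*x-x\|\to0$ and the right-handed analogues for $x\in F\cup F^*$, and then invoke almost stable rank one of $A$ to correct $v_n'$ (for a suitably large, now fixed, $n$) to a genuine unitary $v\in\tilde A$ agreeing with $v_n'$, to within any prescribed tolerance, on the finite set $F\cup F^*$ together with $\{v_nxv_n^*,\ \alpha_g(v_nxv_n^*):x\in F\cup F^*,\ g\in G\}$: concretely one approximates $v_n'$ by an invertible element of $\tilde A$ using $B\subseteq\overline{\mathrm{GL}(\widetilde{B})}$ for an appropriate $\sigma$-unital hereditary subalgebra $B\subseteq A$ and takes its polar part, the control on the finite set coming from $v_n'$ being close to $1$ there in the relevant sense. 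Transporting the two sequence-level estimates across $v_n'y\approx v_ny$ then gives the required inequalities for $v$. I expect this correction step to be the main obstacle: in the unital case $v_n$ is literally an approximate unitary in norm and one simply takes its polar decomposition, whereas in the non-unital case this fails and almost stable rank one is exactly what makes the repair possible; the remainder is a routine, if bookkeeping-heavy, transcription of \cite[Lemma 3.3]{I1}.
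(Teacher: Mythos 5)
Your overall strategy is the same as the paper's: form an element of $A^{\infty}$ of the shape $\sum_{g}(\text{intertwining unitary})\cdot(\text{lift of a Rohlin projection})$, verify the two estimates modulo $\mathrm{Ann}(A,A^{\infty})$, and then use almost stable rank one plus a polar-decomposition argument to replace it by a genuine unitary in $\tilde{A}$ (the paper does this by approximating $w=\sum_g v_gf_g$ entrywise by invertibles $z_n\in\tilde{A}$ and setting $u_n=z_n(z_n^*z_n)^{-1/2}$; since $w^*wa=a$ for $a\in A$ one gets $\|(z_n-u_n)a\|=\|(z_n^*z_n)^{1/2}a-a\|\to 0$, so $wa=ua$ and the sequence-level estimates transfer, noting that $aw=au$ may fail). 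Your commutator estimate and your unitary-correction step are in this spirit and are essentially fine. But your intertwining estimate has a genuine gap: you take the Rohlin projections for $\beta$, and when you expand $v_n^*\alpha_g(v_nxv_n^*)v_n$ you must replace $\alpha_g(e_{h,n})$ by $e_{gh,n}$ modulo the annihilator. The Rohlin property of $\beta$ only gives $\beta_g(e_{h,n})a\approx e_{gh,n}a$, and your justification via $\mathrm{Ad}(w_g^*)\circ\beta_g\approx\alpha_g$ on $F_1$ does not apply, since that approximation holds only on the fixed finite set $F_1\subset A$, while the $e_{h,n}$ form a central sequence which cannot be placed inside $F_1$.

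Moreover the needed statement is false in general: approximate unitary equivalence of $\alpha_g$ and $\beta_g$ gives no control of $\alpha_g-\beta_g$ on central sequences, because the induced automorphisms on $F(A)$ may differ drastically (on $M_{2^{\infty}}$, the identity and the product-type symmetry $\bigotimes_n\mathrm{Ad}(\mathrm{diag}(1,-1))$ are approximately unitarily equivalent, yet act very differently on central sequences); if such control were available, the Rohlin property itself would pass across approximate unitary equivalence for free, trivializing much of the theory. The repair is exactly the paper's choice of bookkeeping: take the Rohlin projections $\{e_g\}$ for $\alpha$ — the action which actually gets applied to them in the expansion — lift them to $\{f_g\}\subset A_{\infty}$, choose unitaries $v_g\in M(A)$ with $v_g\beta_g(y)v_g^*\approx\alpha_g(y)$ for $y\in\bigcup_g\beta_g(F)$, and set $w=\sum_g v_gf_g$; then $\alpha_g(f_h)a=f_{gh}a$ is available by hypothesis and the computation closes. (Alternatively, keep the $\beta$-Rohlin projections but prove the statement with the roles of $\alpha$ and $\beta$ interchanged, which is equally useful since the hypotheses are symmetric; what you cannot do is let $\alpha_g$ act on $\beta$-Rohlin lifts.)
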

\begin{proof}
Put 
$$
F_1:= \bigcup_{g\in G}\beta_g (F) ,
$$
and there exist unitary elements $\{v_g\}_{g\in G}$ in $M(A)$ such that 
$$
\| v_g\beta_g(y)v_g^*-\alpha_g(y) \| <\frac{\epsilon}{2} 
$$
for any $y\in F_1$. We choose Rohlin projections $\{e_g\}_{g\in G}$ of $\alpha$. 
Then there exist positive contractions $\{f_g\}_{g\in G}$ in $A_{\infty}$ such that $[f_g] =e_g$ for any $g\in G$. 
Note that we have $f_gf_ha=0$ if $g\neq h$, $f_g^2a=f_ga$ and $\alpha_g(f_h)a=f_{gh}a$ for any $a\in A$. 
Set 
$$
w:= \sum_{g\in G}v_g f_g \in A^{\infty}.
$$
For any $a\in A$, we have 
\begin{align*}
w^*w a 
& =\sum_{g\in G}\sum_{h\in G} f_g v_g^* v_hf_ha =\sum_{g\in G}\sum_{h\in G} f_g v_g^* v_h a f_h \\
& =\sum_{g\in G}\sum_{h\in G} v_g^* v_h a f_g f_h  =\sum_{g\in G}a f_g =a.
\end{align*}
Since $A\subset \overline{\mathrm{GL}(\tilde{A})}$, there exists a bounded sequence $(z_n)_n$ consisting of invertible elements in $\tilde{A}$ such that 
$w=(z_n)_n$ in $(\tilde{A})^{\infty}$. Put $u_n:= z_n(z_n^*z_n)^{-\frac{1}{2}}$ and $u:=(u_n)_n\in (\tilde{A})^{\infty}$. Then $u_n$ is a unitary element in $\tilde{A}$ 
and we have  
$$
\| (z_n-u_n)a \| =\| u_n((z_n^*z_n)^{\frac{1}{2}}a-a) \| =  \| (z_n^*z_n)^{\frac{1}{2}}a-a  \|  \rightarrow 0
$$
for any $a\in A$. 
Therefore we have $wa=ua$ in $A^{\infty}$. Note that we may not have $aw=au$. 
For $x\in F$, we have 
\begin{align*}
\| \beta_g  (x)-\mathrm{Ad}(u^*)\circ \alpha_g \circ \mathrm{Ad}(u) (x) \| 
& =\| u\beta_g(x)u^* -\alpha_g (uxu^*) \| \\
& =\| w\beta_g(x)w^* -\alpha_g (wxw^*) \| \\ 
& =\| \sum_{h\in G}v_h\beta_g(x)f_h v_h^* -\alpha_{g} (\sum_{h\in G}v_h x f_h v_h^* ) \| \\
& =\| \sum_{h\in G}f_hv_h\beta_g(x) v_h^* -\alpha_{g} (\sum_{h\in G}f_hv_h x  v_h^* ) \| \\
& =\| \sum_{h\in G}f_{gh}v_{gh}\beta_{g}(x)v_{gh}^* -\sum_{h\in G}f_{gh}\alpha_{g} (v_h x v_h^*) \| \\
& =\| \sum_{h\in G}f_{gh}(v_{gh}\beta_g(x)v_{gh}^*- \alpha_g (v_h x v_h^*)) \| .
\end{align*}
Since $f_hf_ka=0$ for any $a\in A$ if $h\neq k$, for $x\in F$, we have 
\begin{align*}
\| \beta_g & (x)-\mathrm{Ad}(u^*)\circ \alpha_g \circ \mathrm{Ad}(u) (x) \| \\ 
& \leq \sup_{h\in G}\{\| v_{gh}\beta_g(x)v_{gh}^*- \alpha_g (v_h x v_h^*) \| \}\\
& \leq \sup_{h\in G}\{\| v_{gh}\beta_{gh}(\beta_{h^{-1}}(x))v_{gh}^*- \alpha_{gh} (\beta_{h^{-1}} (x))\| +\|\alpha_{gh} (\beta_{h^{-1}} (x)) - \alpha_g (v_h x v_h^*) \|\} \\
& \leq \frac{\epsilon}{2}+\sup_{h\in G}\{\|\alpha_{h}(\beta_{h^{-1}} (x))- v_h\beta_h (\beta_{h^{-1}} (x)) v_h^* \| \} \\
& < \epsilon .
\end{align*}
For $x\in F$, we have 
\begin{align*}
\| [u,x] \| & = \| uxu^*-x\| =\| wxw^*-x\| \\
& =\| \sum_{g\in G}f_g(v_g xv_g^* -x) \| \\
& \leq \sup_{g\in G}\{\| v_g\beta_g (\beta_{g^{-1}}(x))v_g^* -\alpha_g (\beta_{g^{-1}}(x)) \| +\|\alpha_g (\beta_{g^{-1}}(x)) -x\| \} \\
& < \epsilon +\sup_{g\in G}\| \beta_g(x)-\alpha_g(x) \|.
\end{align*}
Set $v:=u_n$ for sufficiently large $n$. Then we obtain the conclusion. 
\end{proof}

We can show the following theorem by Lemma \ref{lem:key rohlin} and the Bratteli-Elliott-Evans-Kishimoto intertwining argument \cite{EK}. 
Indeed, the same proof as \cite[Theorem 3.5]{I1} works by using Lemma \ref{lem:key rohlin} instead of \cite[Lemma 3.3]{I1}. 
\begin{thm}\label{thm:Rohlin}
Let $A$ be a separable C$^*$-algebra with almost stable rank one, and let $\alpha$ and $\beta$ be actions of a finite group $G$ on $A$ with 
the Rohlin property. Assume that $\alpha_g$ and $\beta_g$ are approximately unitarily equivalent for any $g\in G$. 
Then there exists an approximately inner automorphism $\theta$ of $A$ such that 
$$
\theta \circ \alpha_g \circ \theta^{-1} =\beta_g, \quad g\in G. 
$$
\end{thm}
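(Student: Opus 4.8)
The plan is to follow the Bratteli--Elliott--Evans--Kishimoto two-sided intertwining argument of \cite{EK}, exactly as in the proof of \cite[Theorem 3.5]{I1}, feeding in Lemma \ref{lem:key rohlin} wherever the unital argument invokes \cite[Lemma 3.3]{I1}. Since $A$ is separable, fix an increasing sequence of finite subsets $F_1\subset F_2\subset \cdots$ of $A$ whose union is dense in $A$, and a sequence $\epsilon_n>0$ with $\sum_n \epsilon_n<\infty$. The goal is to build, inductively, unitaries $v_n\in\tilde A$ and, setting $\theta_n:=\mathrm{Ad}(v_n\cdots v_1)$, to arrange that the $\theta_n$ converge in the point-norm topology to an automorphism $\theta$ (here one uses that each $\theta_n$ is inner in $\tilde A$, hence approximately inner on $A$, and that the products converge because of the $[v_n,x]$-estimates together with $\mathrm{Ad}$'s on both sides of the intertwining), and simultaneously that $\theta\circ\alpha_g\circ\theta^{-1}=\beta_g$ for all $g\in G$.

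The inductive step is where Lemma \ref{lem:key rohlin} enters. At stage $n$ one has already produced $\theta_{n-1}$; one applies Lemma \ref{lem:key rohlin} to the two actions $\theta_{n-1}\circ\alpha\circ\theta_{n-1}^{-1}$ (which still has the Rohlin property, since conjugating by an automorphism carries Rohlin projections to Rohlin projections) and $\beta$, on a finite set large enough to contain $F_n$ together with the relevant images under the finitely many $\alpha_g,\beta_g$ and the previously chosen unitaries, and with tolerance $\epsilon_n$. The hypothesis that $\alpha_g$ and $\beta_g$ are approximately unitarily equivalent is preserved under conjugating $\alpha$ by the inner $\theta_{n-1}$, so the lemma is applicable. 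This yields a unitary $w_n\in\tilde A$ with
$$
\bigl\|\beta_g(x)-\mathrm{Ad}(w_n^*)\circ(\theta_{n-1}\circ\alpha_g\circ\theta_{n-1}^{-1})\circ\mathrm{Ad}(w_n)(x)\bigr\|<\epsilon_n
$$
for $x$ in the chosen finite set and all $g\in G$, together with the commutator estimate
$$
\|[w_n,x]\|<\epsilon_n+\sup_{g\in G}\bigl\|\beta_g(x)-(\theta_{n-1}\circ\alpha_g\circ\theta_{n-1}^{-1})(x)\bigr\|.
$$
One sets $v_n$ to be (essentially) $\theta_{n-1}(w_n^*)$ so that $\theta_n=\mathrm{Ad}(v_n)\circ\theta_{n-1}$ has the effect of the outer $\mathrm{Ad}(w_n^*)$ in the displayed approximate-intertwining inequality; the standard bookkeeping of \cite{EK} arranges the finite sets and tolerances so that the right-hand side of the commutator estimate is itself controlled (it is at most a fixed multiple of $\epsilon_{n-1}$ by the previous stage), guaranteeing Cauchy-ness of $(\theta_n)$ and of $(\theta_n^{-1})$ on the dense set, hence convergence to an automorphism $\theta$, and simultaneously forcing $\sup_{g}\|\beta_g(x)-(\theta\circ\alpha_g\circ\theta^{-1})(x)\|=0$ on the dense set, i.e. $\theta\circ\alpha_g\circ\theta^{-1}=\beta_g$.

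The main obstacle, and the only place requiring genuine care in the non-unital setting, is the convergence of the intertwining: one must ensure both that the sequence $\theta_n$ and its inverse sequence converge in point-norm (so that the limit is a genuine automorphism, not merely a unital completely positive map), and that $\theta$ is approximately inner on $A$ — the latter follows because each $\theta_n$ is $\mathrm{Ad}$ of a unitary in $\tilde A$ and the point-norm limit of approximately inner automorphisms is approximately inner. This is handled exactly as in \cite{EK} and \cite[Theorem 3.5]{I1}; the only substantive input that needed replacing was \cite[Lemma 3.3]{I1}, whose role is played verbatim by Lemma \ref{lem:key rohlin} above (note that almost stable rank one of $A$ was used precisely to produce the approximating invertibles $z_n$ in the proof of that lemma). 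Since no other feature of the unital case was used, the same proof goes through, which is why we have only indicated the modifications rather than rewriting the argument in full.
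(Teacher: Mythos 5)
Your proposal is correct and follows essentially the same route as the paper, which itself only records that the Bratteli--Elliott--Evans--Kishimoto intertwining of \cite{EK}, run exactly as in the proof of \cite[Theorem 3.5]{I1}, goes through once \cite[Lemma 3.3]{I1} is replaced by Lemma \ref{lem:key rohlin}; your account of the inductive step, the role of almost stable rank one (only inside the lemma), and the approximate innerness of the limit matches this. The one detail left implicit in your sketch --- that the Evans--Kishimoto scheme alternates which of the two actions is conjugated at successive stages so that both $\theta_n$ and $\theta_n^{-1}$ converge --- is exactly the ``standard bookkeeping'' you invoke, so nothing substantive is missing.
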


\begin{rem}
Izumi showed such a statement for unital separable C$^*$-algebras $A$ without assuming that $A$ has stable rank one. 
We do not know that we can show the theorem above without assuming that $A$ has almost stable rank one. 
Note that every simple separable $\mathcal{Z}$-stable stably projectionless C$^*$-algebra has almost stable rank one (see Section \ref{sec:strict comparison}). 
\end{rem}

\begin{cor}\label{cor:Razak-Rohlin}
Let $\alpha$ be an action of a finite group $G$ on $\mathcal{W}_2$ with the Rohlin property. 
Then $\alpha$ is conjugate to $\nu^{G}$ in Example \ref{ex:model rohlin}. 
\end{cor}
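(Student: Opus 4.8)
The plan is to deduce this directly from Theorem \ref{thm:Rohlin} applied with $\beta=\nu^{G}$. Three hypotheses have to be checked. First, $\mathcal{W}_2$ is separable, and being simple, nuclear, stably projectionless and $\mathcal{Z}$-stable (it is an inductive limit of $1$-dimensional NCCW complexes), it has almost stable rank one by Robert's result recalled in Section \ref{sec:strict comparison}. Second, $\alpha$ has the Rohlin property by assumption and $\nu^{G}$ has the Rohlin property by Example \ref{ex:model rohlin}. Third --- the only point requiring an argument --- I claim that $\alpha_g$ and $\nu^{G}_g$ are approximately unitarily equivalent for every $g\in G$.

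For the third point I would invoke Robert's classification of automorphisms, Theorem \ref{thm:Robert}(i), which applies since $\mathcal{W}_2$ is stably isomorphic to an inductive limit of $1$-dimensional NCCW complexes with trivial $K_1$: two automorphisms of $\mathcal{W}_2$ are approximately unitarily equivalent if and only if they induce the same map on $K_0$ and the same map on $T$. Since $K_0(\mathcal{W}_2)=0$, the $K_0$-condition is automatic. For the trace condition, recall that $\mathcal{W}_2$ has a unique tracial state $\tau$ and no unbounded traces, so its cone $T(\mathcal{W}_2)$ of densely defined lower semicontinuous traces is the ray $\mathbb{R}_{\geq 0}\tau$; any automorphism $\gamma$ of $\mathcal{W}_2$ satisfies $\tau\circ\gamma=\tau$ by uniqueness of the tracial state, hence $T(\gamma)$ fixes $\tau$ and therefore equals $\mathrm{id}_{T(\mathcal{W}_2)}$. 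In particular $K_0(\alpha_g)=K_0(\nu^{G}_g)$ and $T(\alpha_g)=T(\nu^{G}_g)$ for all $g$, so $\alpha_g$ is approximately unitarily equivalent to $\nu^{G}_g$.

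With all three hypotheses verified, Theorem \ref{thm:Rohlin} produces an automorphism $\theta$ of $\mathcal{W}_2$ with $\theta\circ\alpha_g\circ\theta^{-1}=\nu^{G}_g$ for every $g\in G$, that is, $(\mathcal{W}_2,\alpha)\cong(\mathcal{W}_2,\nu^{G})$. The substantive work has already been carried out in Lemma \ref{lem:key rohlin} and Theorem \ref{thm:Rohlin}; the only genuinely new observation is the triviality of the invariant $(K_0,T)$ for automorphisms of $\mathcal{W}_2$, which renders the approximate-unitary-equivalence hypothesis vacuous. I do not expect any real obstacle beyond making sure that $T(\mathcal{W}_2)$ is indeed a single ray, which is immediate from $\mathcal{W}_2$ having a unique tracial state and no unbounded traces.
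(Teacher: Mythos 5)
Your proof is correct and takes essentially the same route as the paper: both verify the hypotheses of Theorem \ref{thm:Rohlin} with $\beta=\nu^{G}$, the only substantive point being that any two automorphisms of $\mathcal{W}_2$ are approximately unitarily equivalent. The paper obtains this by quoting Razak's classification theorem (every automorphism of $\mathcal{W}_2$ is approximately inner), while you deduce it from Theorem \ref{thm:Robert}(i) using $K_0(\mathcal{W}_2)=0$ and the triviality of the induced map on the trace cone; this is merely a different choice of classification input, not a different argument.
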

\begin{proof}
By Razak's classification theorem \cite{Raz}, every automorphism of $\mathcal{W}_2$ is approximately inner. 
Therefore Theorem \ref{thm:Rohlin} implies the conclusion. 
\end{proof}

We shall show that there exists an action $\alpha$ of $\mathbb{Z}_2$ on $\mathcal{W}_2$, which does not have the Rohlin property. 
This action is locally representable (see Definition \ref{def:locally}) and will be classified in Section \ref{sec:sym}. 

\begin{ex}\label{ex:non-Rohlin}
Let $\beta$ be an action of $\mathbb{Z}_2$ on $M_{2^\infty}$ defined by 
$$
\beta =\bigotimes_{n=1}^\infty\mathrm{Ad}(1_{2^n-1}\oplus (-1))\quad \mathrm{on}\quad \bigotimes_{n=1}^\infty M_{2^n}(\mathbb{C}) .
$$
Define a $\mathbb{Z}_2$-action $\alpha$ on $M_{2^\infty}\otimes \mathcal{W}_2\cong \mathcal{W}_2$ by 
$$
\alpha :=\beta\otimes \mathrm{id}.
$$
Then it is easily seen that $\beta$ is an outer action on $M_{2^{\infty}}$ but there exists a unitary element $U$ in a II$_1$ factor $\pi_{\tau} (M_{2^{\infty}})^{''}$ 
such that $\tilde{\beta } =\mathrm{Ad}(U)$ and $\tilde{\tau} (U)=0$ where $\tau$ is the unique tracial state on $M_{2^{\infty}}$. 
Hence we have $\tilde{\alpha} =\mathrm{Ad} (U\otimes 1)$, and $\mathcal{W}_2\rtimes_{\alpha} \mathbb{Z}_2$ has exactly two extremal tracial states by Proposition \ref{pro:two extremal}. 
(See also \cite[Example 2.9]{Phi}.) 
Therefore $\alpha$ is not cocycle conjugate to $\nu^{\mathbb{Z}_2}$ because $\mathcal{W}_2\rtimes_{\alpha} \mathbb{Z}_2$ is not isomorphic to $\mathcal{W}_2$. 
Corollary \ref{cor:Razak-Rohlin} implies that $\alpha$ does not have the Rohlin property. 
Note that the invariant $\epsilon(\alpha)$ defined in Definition \ref{def:trace-invariant} is equal to $0$. 
\end{ex}

Let $\alpha$ be an action of a finite group $G$ on a C$^*$-algebra $A$. Then the $K$-groups of $A$ have $G$-module structures induced by $\alpha$. 
We denote by $(K_0(A), \alpha)$ and $(K_1(A), \alpha)$ such $G$-modules. 
Izumi showed that there exist strong $K$-theoretical constraints for the Rohlin property in the unital case \cite[Theorem 3.13]{I1} and \cite[Theorem 3.3]{I2}. 
We do not use projections and unitary elements in $\tilde{A}$ for the definition of the Rohlin property. 
Nevertheless, we can show that there exist the same $K$-theoretical constraints by the following proposition. 

\begin{pro}\label{pro:K-con}
Let $A$ be a separable simple C$^*$-algebra, and let $\alpha$ be an action of a finite group $G$ on $A$ with the Rohlin property. 
Then there exists a unital separable C$^*$-algebra $B$, and an action $\beta$ of $G$ on $B$ with the Rohlin property 
such that $(K_i(A), \alpha)$ is isomorphic to $(K_i(B), \beta)$ as $G$-modules for $i=1,2$. 
\end{pro}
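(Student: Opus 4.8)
The plan is to reduce the stably projectionless setting to the unital one by passing to a suitable hereditary-looking subalgebra with the same $K$-theory on which the action still has the Rohlin property. Concretely, since $\alpha$ has the Rohlin property there is a partition of unity $\{e_g\}_{g\in G}$ of projections in $F(A)$ with $\alpha_g(e_h)=e_{gh}$; lifting these to a representative sequence and exploiting that $F(A)$ has a unit $[(h_n)_n]$ coming from an approximate unit of $A$, I would first arrange that, after a conjugacy-irrelevant adjustment, there is a genuine positive contraction $s\in A_\infty$ (or in $A$ itself) that is strictly positive in a $\sigma$-unital hereditary subalgebra $B_0:=\overline{sAs}$. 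The idea is that $A$ itself already works: take $B$ to be a \emph{unitization-like} object built from $A$, but more efficiently one uses the standard fact that $F(A)\cong F(A\otimes\mathbb K)$ together with the observation that a Rohlin action on $A$ induces one on $A\otimes\mathbb K$, and then one cuts down by a full projection in $M(A\otimes\mathbb K)$ that is $\alpha\otimes\mathrm{id}$-invariant.

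First I would make the cut-down precise. Since $G$ is finite and $\alpha$ acts on $A\otimes\mathbb K$, choose a projection $p\in M(A\otimes\mathbb K)$ of the form $p=\sum_{g\in G}\, (\alpha_g\otimes\mathrm{id})(p_0)$ reorganised so that $p$ is $(\alpha\otimes\mathrm{id})$-invariant and full (fullness is automatic because $A$ is simple). Then $B:=p(A\otimes\mathbb K)p$ is a unital separable C$^*$-algebra, $\alpha\otimes\mathrm{id}$ restricts to an action $\beta$ of $G$ on $B$, and by fullness the inclusion $B\hookrightarrow A\otimes\mathbb K$ is a full corner, so $K_i(B)\cong K_i(A\otimes\mathbb K)\cong K_i(A)$ and this isomorphism is $G$-equivariant because it is induced by the inclusion, which intertwines $\beta$ and $\alpha\otimes\mathrm{id}$. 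Hence $(K_i(B),\beta)\cong(K_i(A),\alpha)$ as $G$-modules.

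Next I would verify that $\beta$ inherits the Rohlin property. This is where one uses that $F(A)\cong F(A\otimes\mathbb K)$ and, crucially, that for a full corner $B=pCp$ of a C$^*$-algebra $C$ one has a natural isomorphism $F(B)\cong F(C)$ equivariant for any action preserving the corner; I would cite this from \cite{Kir2} or \cite{Na2}. Under this identification the Rohlin projections $\{e_g\}$ of $\alpha$ (viewed in $F(A\otimes\mathbb K)=F(C)$) transport to a partition of unity of projections in $F(B)$ satisfying $\beta_g(e_h)=e_{gh}$, since the identification is equivariant and sends the unit to the unit. Thus $\beta$ has the Rohlin property and $B$ is unital, as required.

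The main obstacle I anticipate is the equivariance and well-definedness of the isomorphism $F(pCp)\cong F(C)$ for an invariant full corner: one must check that a central sequence in $pCp$ extends to (or corresponds to) a central sequence in $C$ modulo annihilators, and that this correspondence commutes with the induced automorphisms. For a full hereditary subalgebra this is essentially Kirchberg's stability result that $F(\cdot)$ only depends on the stable isomorphism class together with Morita-type naturality, but making the $G$-equivariance airtight — in particular choosing the invariant projection $p$ so that the corner is both full and carries the action, and checking that passing to $F$ does not disturb the relations $\alpha_g(e_h)=e_{gh}$ — is the step that needs genuine care rather than a routine citation. Everything else (fullness $\Rightarrow$ $K$-theory isomorphism, restriction of the action to the corner) is standard.
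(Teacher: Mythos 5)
There is a genuine gap: your construction cannot produce a \emph{unital} algebra $B$ in exactly the situation the proposition is designed for. You set $B=p(A\otimes\mathbb{K})p$ with $p$ a projection in $M(A\otimes\mathbb{K})$. Such a corner is unital only if $p$ actually lies in $A\otimes\mathbb{K}$: if $e\in pCp$ were a unit, then cutting an approximate unit $(u_\lambda)$ of $C$ by $p$ gives $e\,pu_\lambda p=pu_\lambda p$, and since $pu_\lambda p\to p$ strictly this forces $e=p$, hence $p\in C$. But the algebras this paper cares about, e.g.\ $A=\mathcal{W}_2$, are stably projectionless, so $A\otimes\mathbb{K}$ contains no non-zero projections whatsoever and therefore admits no unital corner at all. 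The step ``choose $p=\sum_{g\in G}(\alpha_g\otimes\mathrm{id})(p_0)$ reorganised to be invariant'' also has no content (a sum of projections is not a projection, and there is nothing inside the algebra to start from), and the equivariant Morita-invariance of $F(\cdot)$ that you flag as the delicate point is never what blocks the argument: the obstruction is structural, not technical, and the route through $A\otimes\mathbb{K}$ cannot be repaired.

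The missing idea in the paper's proof is to tensor with $\mathcal{O}_\infty$ rather than with $\mathbb{K}$: $\mathcal{O}_\infty$ is $KK$-equivalent to $\mathbb{C}$, so the $K$-theory and its $G$-module structure are unchanged, while $A\otimes\mathcal{O}_\infty$ becomes purely infinite and hence rich in projections. Since the Rohlin property forces $\alpha$ (and so $\alpha\otimes\mathrm{id}$) to be outer, the crossed product $(A\otimes\mathcal{O}_\infty)\rtimes_{\alpha\otimes\mathrm{id}}G$ is simple and purely infinite, and the fixed point algebra, being a corner of it, is purely infinite too; in particular it contains a non-zero projection $p$, which lies \emph{in the algebra} and is $(\alpha\otimes\mathrm{id})$-invariant by construction, so $B:=p(A\otimes\mathcal{O}_\infty)p$ is unital and carries the restricted action $\beta$. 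The Rohlin projections transfer by the elementary observation that if $e_g=[(f_{n,g})_n]$ with $(f_{n,g})_n\in A_\infty$, then $\{[((f_{n,g}\otimes 1)p)_n]\}_{g\in G}$ are Rohlin projections for $\beta$; no isomorphism $F(pCp)\cong F(C)$ is invoked. Finally $K_i(B)\cong K_i(A\otimes\mathcal{O}_\infty)\cong K_i(A)$ equivariantly, by Brown's theorem applied to the full hereditary subalgebra $B$ together with the $KK$-equivalence of $\mathcal{O}_\infty$ with $\mathbb{C}$ --- this last part matches your use of fullness and Brown's theorem, which is the right instinct; what you lack is the device for manufacturing an invariant projection inside the algebra without changing $K$-theory.
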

\begin{proof}
Let $\mathcal{O}_{\infty}$ be the Cuntz algebra generated by an infinite sequence of isometries. Then $A\otimes \mathcal{O}_{\infty}$ is purely infinite 
(see, for example, \cite[Theorem 4.1.10]{Ror1}). 
Since $\alpha\otimes \mathrm{id}$ is outer, $(A\otimes\mathcal{O}_{\infty})\rtimes_{\alpha\otimes \mathrm{id}}G$ is simple and purely infinite by \cite[Lemma 10]{KK2}. 
Therefore we see that $(A\otimes\mathcal{O}_{\infty})^{\alpha\otimes \mathrm{id}}$ is purely infinite, and hence  
$(A\otimes\mathcal{O}_{\infty})^{\alpha\otimes \mathrm{id}}$ has a nonzero projection $p$. 
Put $B:=p(A\otimes\mathcal{O}_{\infty})p$ and $\beta :=(\alpha\otimes\mathrm{id}) |_{p(A\otimes\mathcal{O}_{\infty})p}$. Choose Rohlin projections 
$\{e_g=[(f_{n,g})_n] \}_{g\in G}$ of $\alpha$ in $F(A)$ where $(f_{n,g})_n \in A_{\infty}$. Then $\{((f_{n,g}\otimes 1)p)_n\}_{g\in G}$ are Rohlin projections of $\beta$, and hence 
$\beta$ has the Rohlin property. 
Since the inclusion map $i$ from $p(A\otimes\mathcal{O}_{\infty})p$ into $A\otimes\mathcal{O}_{\infty}$ induces isomorphisms of $K$-groups by Brown's theorem \cite{B} 
and $\mathcal{O}_{\infty}$ is $KK$-equivalent to $\mathbb{C}$, we see that $(K_i(B), \beta)$ is isomorphic to $(K_i(A), \alpha)$ as $G$-modules for $i=1,2$. 
\end{proof}

We refer the reader to \cite{I2} for details of $K$-theoretical constraints. 
The following corollary is an immediate consequence of Proposition \ref{pro:K-con} and \cite[Theorem 3.6]{I2}. 

\begin{cor}
Let $A$ be a simple separable C$^*$-algebra such that either $K_0(A)$ or $K_1(A)$ is isomorphic to $\mathbb{Z}$. 
Then there exist no non-trivial finite group actions on $A$ with the Rohlin property. 
\end{cor}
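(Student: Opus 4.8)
The plan is to transfer the obstruction to the unital setting via Proposition \ref{pro:K-con} and then quote Izumi's $K$-theoretic constraint for the Rohlin property in the unital case. So suppose, toward a contradiction, that $\alpha$ is a non-trivial action of a finite group $G$ on $A$ with the Rohlin property. First I would observe that $G$ itself is non-trivial, since the only action of the trivial group is the trivial one; in particular $|G|\geq 2$.

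Next I would apply Proposition \ref{pro:K-con} to $(A,\alpha)$, which is legitimate because $A$ is separable and simple. This produces a unital separable C$^*$-algebra $B$ together with an action $\beta$ of $G$ on $B$ having the Rohlin property and isomorphisms of $G$-modules $(K_i(A),\alpha)\cong (K_i(B),\beta)$ for $i=0,1$. Forgetting the $G$-actions, these are in particular group isomorphisms $K_i(A)\cong K_i(B)$, so by hypothesis either $K_0(B)\cong\mathbb{Z}$ or $K_1(B)\cong\mathbb{Z}$. I would also record that $\beta$ is necessarily a non-trivial action: if $\beta_g=\mathrm{id}_B$ for every $g\in G$, then any Rohlin projections $\{e_g\}_{g\in G}\subset F(B)$ would satisfy $e_g=\beta_g(e_{\iota})=e_{\iota}$ for all $g$, whence $1_{F(B)}=\sum_{g\in G}e_g=|G|\,e_{\iota}$, which is impossible for a non-zero projection when $|G|\geq 2$.

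Finally I would invoke Izumi's $K$-theoretic constraint for unital Rohlin actions, \cite[Theorem 3.6]{I2}: a unital separable C$^*$-algebra admitting a non-trivial finite group action with the Rohlin property cannot have a $K$-group isomorphic to $\mathbb{Z}$ (indeed the $K$-groups must be divisible by the order of the group). Applied to $(B,\beta)$ this contradicts the conclusion of the previous paragraph, and the corollary follows. I do not expect any genuine obstacle here: all of the real work is carried out in Proposition \ref{pro:K-con} — in particular the observation that the Rohlin projections of $\alpha$ restrict to Rohlin projections of $\beta$ on the unital corner $p(A\otimes\mathcal{O}_{\infty})p$ and that this corner has the same equivariant $K$-theory — so what remains is only the bookkeeping needed to see that $G$ and $\beta$ are non-trivial, which is precisely so that Izumi's unital result can be quoted.
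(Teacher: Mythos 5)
Your proposal is correct and follows essentially the same route as the paper, which obtains the corollary as an immediate consequence of Proposition \ref{pro:K-con} combined with \cite[Theorem 3.6]{I2}, exactly as you do. The extra bookkeeping you supply (that $G$, and hence the transferred action $\beta$, must be non-trivial, since a trivial action of a non-trivial group cannot admit Rohlin projections) is sound and is left implicit in the paper.
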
 
For given $K$-theoretical invariants, Izumi constructed model actions with the Rohlin property on Kirchberg algebras \cite[Theorem 5.3]{I2}. 
As an application of this construction, we obtain the following corollary by a similar argument as in the proof of Proposition \ref{pro:K-con} and \cite[Corollary 5.5]{I2}. 
\begin{cor}
Let $\alpha$ be an action of a finite group $G$ with the Rohlin property on a simple separable nuclear C$^*$-algebra $A$ in the UCT class. 
Then $A\rtimes_{\alpha}G$ is in the UCT class. 
\end{cor}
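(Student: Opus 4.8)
The plan is to reduce the non-unital, possibly non-purely-infinite situation to the Kirchberg-algebra setting where Izumi has already proved the analogous statement, exactly as was done in the proof of Proposition \ref{pro:K-con}. First I would tensor with $\mathcal{O}_\infty$: set $A_1 := A\otimes\mathcal{O}_\infty$ and $\alpha^{(1)} := \alpha\otimes\mathrm{id}$. Since $A$ is simple, separable and nuclear, $A_1$ is a simple separable nuclear C$^*$-algebra, it is purely infinite by \cite[Theorem 4.1.10]{Ror1}, and it lies in the UCT class because $A$ does and $\mathcal{O}_\infty$ is $KK$-equivalent to $\mathbb{C}$ (the UCT class is closed under tensoring with a nuclear UCT algebra). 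The action $\alpha^{(1)}$ still has the Rohlin property: if $\{e_g=[(f_{n,g})_n]\}_{g\in G}$ are Rohlin projections for $\alpha$ in $F(A)$, then $\{[(f_{n,g}\otimes 1)_n]\}_{g\in G}$ are Rohlin projections for $\alpha^{(1)}$ in $F(A_1)$.

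Next I would cut down to a unital corner, again following the proof of Proposition \ref{pro:K-con}. Since $\alpha^{(1)}$ is Rohlin it is in particular outer, so $A_1\rtimes_{\alpha^{(1)}}G$ is simple and purely infinite by \cite[Lemma 10]{KK2}, hence so is the fixed point algebra $(A_1)^{\alpha^{(1)}}$, which therefore contains a nonzero projection $p$. Put $B:=p A_1 p$ and $\beta:=\alpha^{(1)}|_{pA_1p}$; note $\beta$ makes sense because $p$ is $\alpha^{(1)}$-fixed. Then $B$ is a unital separable nuclear purely infinite simple C$^*$-algebra, i.e.\ a Kirchberg algebra, and it is in the UCT class because it is stably isomorphic to $A_1$ (Brown's theorem \cite{B}, using that $p$ is full in the simple algebra $A_1$) and the UCT class is closed under stable isomorphism. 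The same computation as in Proposition \ref{pro:K-con} shows $\beta$ has the Rohlin property, with Rohlin projections $\{[((f_{n,g}\otimes 1)p)_n]\}_{g\in G}$.

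Now I would invoke Izumi's theorem \cite[Corollary 5.5]{I2}: a Rohlin action of a finite group on a UCT Kirchberg algebra has crossed product again in the UCT class, so $B\rtimes_\beta G$ is in the UCT class. Finally I would propagate this back up. By the Green--Takai type picture used above (or directly: $p$ is a full $\alpha^{(1)}$-invariant projection in $A_1$, so $p$ is also full in $A_1\rtimes_{\alpha^{(1)}}G$ and $B\rtimes_\beta G \cong p(A_1\rtimes_{\alpha^{(1)}}G)p$), the crossed product $A_1\rtimes_{\alpha^{(1)}}G$ is stably isomorphic to $B\rtimes_\beta G$ and hence is in the UCT class. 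On the other hand $A_1\rtimes_{\alpha^{(1)}}G=(A\otimes\mathcal{O}_\infty)\rtimes_{\alpha\otimes\mathrm{id}}G\cong (A\rtimes_\alpha G)\otimes\mathcal{O}_\infty$, which is $KK$-equivalent to $A\rtimes_\alpha G$; since the UCT class is $KK$-stable this forces $A\rtimes_\alpha G$ into the UCT class, as desired.

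I expect the only real point requiring care is the bookkeeping in the last paragraph: one must check that the corner projection $p\in (A_1)^{\alpha^{(1)}}$ remains full in the crossed product $A_1\rtimes_{\alpha^{(1)}}G$ (which follows from simplicity of the crossed product, itself guaranteed by outerness of $\alpha^{(1)}$ via \cite[Lemma 10]{KK2}), and that the identification $B\rtimes_\beta G\cong p(A_1\rtimes_{\alpha^{(1)}}G)p$ is legitimate — this is the standard fact that for a $G$-invariant projection the reduced crossed product of the corner is the corner of the reduced crossed product. Everything else is an assembly of results quoted in the excerpt (Brown's theorem, $KK$-triviality of $\mathcal{O}_\infty$, closure of the UCT class under stable isomorphism and $KK$-equivalence) together with Izumi's Kirchberg-algebra result.
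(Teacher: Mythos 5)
Your proposal is correct and follows essentially the same route the paper indicates: tensoring with $\mathcal{O}_\infty$ and cutting to a corner by a projection in the fixed point algebra exactly as in the proof of Proposition \ref{pro:K-con}, then applying Izumi's result \cite[Corollary 5.5]{I2} for Rohlin actions on UCT Kirchberg algebras and transporting the conclusion back via stable isomorphism (Brown's theorem) and the $KK$-equivalence of $(A\rtimes_\alpha G)\otimes\mathcal{O}_\infty$ with $A\rtimes_\alpha G$. The paper leaves these details as "a similar argument," and your write-up supplies them accurately.
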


\section{Approximately representable actions}\label{sec:app-rep}
In this section we introduce the approximate representability for finite group actions on $\sigma$-unital C$^*$-algebras 
and study some basic properties of approximately representable actions. Moreover we give some examples of Rohlin actions on $\mathcal{W}_2$ 
by using a classification result of some approximately representable actions. 

\begin{Def}\label{def:app rep}
An action $\alpha$ of a finite abelian group $G$ on a $\sigma$-unital C$^*$-algebra $A$ is said to be \textit{approximately representable} if 
there exist elements $\{w(g)\}_{g\in G}$ in $(A^{\alpha})_{\infty}$ such that a map $u$ from $G$ to $F(A^{\alpha})$ given by $u(g)=[w(g)]$ is a unitary representation of $G$ and 
$$
\alpha_g(a) =w(g) a w(g)^* 
$$  
in $A^{\infty}$ for any $a\in A$ and $g\in G$. 
\end{Def}
If $A$ is unital, then the definition above coincides with the definition in \cite{I1}. 
\begin{rem}
For a general finite group $G$, we can define the approximate representability as in \cite[Remark 3.7]{I1}. 
Put $F(A^{\alpha}, A):= A^{\infty}\cap (A^{\alpha})^\prime /\mathrm{Ann}(A^{\alpha},A^{\infty})$ (see Section \ref{subsec:kir}). 
An action $\alpha$ of a finite group $G$ on a $\sigma$-unital C$^*$-algebra $A$ is said to be approximately representable 
if there exist elements $\{w(g)\}_{g\in G}$ in $A^{\infty}\cap (A^{\alpha})^{\prime}$ such that a map from $G$ to $F(A^{\alpha}, A)$ given by $u(g)=[w(g)]$ is a unitary representation of $G$ and 
$$
\alpha_g(a) =w(g) a w(g)^*, \quad \mathrm{in} \; A^{\infty}, \quad a\in A,\; g\in G 
$$  
$$
\alpha_g(u(h))=u(g h g^{-1}), \quad \mathrm{in} \; F(A^{\alpha}, A), \quad g,h\in G. 
$$ 
\end{rem}

We shall show that the approximate representability is the "dual" notion of the Rohlin property. 
Note that if $\alpha$ is an action of a finite group $G$ on $A$, then we have $(A^{\alpha})^{\infty}=(A^{\infty})^{\alpha}$. 
In a similar way, we see the following lemma. 

\begin{lem}\label{lem:dual}
Let $\alpha$ be an action of a finite group $G$ on $A$, and let $B$ be a subalgebra of $A^{\alpha}$. 
If $w$ is an element in $A^{\infty}\cap B^{\prime}$ such that $wa=\alpha_g(w)a$ for any $a\in A$ and $g\in G$, 
then there exists an element $y\in (A^{\alpha})^{\infty}\cap B^{\prime}$ such that $wa=ya$ in $A^{\infty}$ for any $a\in A$. 
\end{lem}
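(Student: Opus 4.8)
The plan is to average $w$ over the group $G$ and verify that the result lies in $(A^{\alpha})^{\infty}$, commutes with $B$, and acts on $A$ the same way $w$ does. Concretely, set
$$
y := \frac{1}{|G|}\sum_{g\in G}\alpha_g(w) \in A^{\infty}.
$$
First I would check that $y$ is $\alpha$-invariant: for any $h\in G$ we have $\alpha_h(y) = \frac{1}{|G|}\sum_{g\in G}\alpha_{hg}(w) = y$ by reindexing the sum, so $y \in (A^{\infty})^{\alpha} = (A^{\alpha})^{\infty}$, using the identity noted just before the lemma. Next, since each $\alpha_g$ restricts to the identity on $B \subset A^{\alpha}$ and $w$ commutes with $B$, each $\alpha_g(w)$ commutes with $B$ as well; hence $y \in (A^{\alpha})^{\infty}\cap B^{\prime}$.

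The remaining point is that $ya = wa$ in $A^{\infty}$ for every $a\in A$. This is immediate from the hypothesis $\alpha_g(w)a = wa$ for all $g\in G$ and $a\in A$: indeed
$$
ya = \frac{1}{|G|}\sum_{g\in G}\alpha_g(w)a = \frac{1}{|G|}\sum_{g\in G}wa = wa.
$$

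I expect no serious obstacle here; the only mild subtlety is the bookkeeping that $(A^{\alpha})^{\infty} = (A^{\infty})^{\alpha}$ (so that the averaged element really does define an element of the sequence algebra of the fixed point algebra) and that $\alpha$ descends to an action on $A^{\infty}$ fixing the image of $A^{\alpha}$ pointwise — both of which are recorded in the paragraph preceding the statement. One should also remark that the same averaging trick, combined with the cocycle-type relation in the general (non-abelian) formulation, handles the corresponding statement for $F(A^{\alpha},A)$, which is presumably the "similar way" alluded to in the text.
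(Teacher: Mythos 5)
Your proposal is correct and is essentially the paper's own argument: the paper also averages over $G$, just phrased at the level of a representative sequence $(w_n)_n$ with $y_n=\frac{1}{|G|}\sum_{g\in G}\alpha_g(w_n)\in A^{\alpha}$, while you average $w$ directly in $A^{\infty}$ and invoke $(A^{\infty})^{\alpha}=(A^{\alpha})^{\infty}$. The verifications of $B$-commutation and $ya=wa$ are exactly as in the intended proof.
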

\begin{proof}
Let $(w_n)_n$ be a representative of $w$. Define $y_n:=\frac{1}{|G|}\sum_{g\in G}\alpha_g(x_n) \in A^{\alpha}$, and put $y=(y_n)_n$. 
Then $y$ has the desired property. 
\end{proof}

See \cite[Lemma 3.8]{I1} for the unital case of the following proposition. 

\begin{pro}\label{thm:dual}
Let $\alpha$ be an action of a finite abelian group $G$ on a $\sigma$-unital C$^*$-algebra $A$. Then \\
(i) $\alpha$ has the Rohlin property if and only if $\hat{\alpha}$ is approximately representable; \\
(ii) $\alpha$ is approximately representable if and only $\hat{\alpha}$ has the Rohlin property. 
\end{pro}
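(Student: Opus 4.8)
The plan is to prove both equivalences by passing through the crossed product $A \rtimes_\alpha G$ and its dual action $\hat\alpha$ of $\hat G$, using the Takesaki-Takai duality and the identification $A \rtimes_\alpha G \rtimes_{\hat\alpha} \hat G \cong A \otimes K(\ell^2(G))$ recalled in Section~\ref{sec:crossed}. Since $G$ is finite abelian, $\hat{\hat G} \cong G$ canonically, so (ii) is obtained from (i) by replacing $(A,\alpha)$ with $(A \rtimes_\alpha G, \hat\alpha)$; thus it suffices to prove (i). Moreover, since $F(A) \cong F(A \otimes K(\ell^2(G)))$ and the Rohlin property and approximate representability are both invariant under passing to a stable C$^*$-algebra (one simply tensors Rohlin projections / the implementing unitaries with an approximate unit of $K(\ell^2(G))$, as in Example~\ref{ex:model rohlin}), working up to Morita equivalence causes no loss.

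First I would prove the forward direction of (i): if $\alpha$ has the Rohlin property, then $\hat\alpha$ is approximately representable. Choose Rohlin projections $\{e_g\}_{g \in G} \subset F(A)$ with $\alpha_g(e_h) = e_{gh}$, lift them to positive contractions $(f_{n,g})_n \in A_\infty$ with the usual almost-orthogonality relations $f_g f_h a = 0$ ($g \neq h$), $f_g^2 a = f_g a$, $\alpha_g(f_h) a = f_{gh} a$ for $a \in A$. Inside $M(A \rtimes_\alpha G)^\infty$ I would form the elements $w(\gamma) := \sum_{g \in G} \overline{\gamma(g)}\, f_g \lambda_g$ where $\lambda_g$ is the implementing unitary; the point is that $w(\gamma)$ lies in $(A \rtimes_\alpha G)^\infty$ and is fixed by $\hat\alpha$ (since $\hat\alpha_\gamma(\lambda_g) = \gamma(g)\lambda_g$, and $f_g$ is fixed by $\hat\alpha$), that $w$ is multiplicative in $\gamma$ modulo $\mathrm{Ann}$ thanks to the relation $\alpha_g(f_h) a = f_{gh} a$, and that $w(\gamma)$ implements $\hat\alpha_\gamma$ on $A \rtimes_\alpha G$ because conjugating $x_0 + x_1 \lambda_g$ by $\sum f_h \lambda_h$ multiplies the coefficient of $\lambda_g$ by the scalar that $\hat\alpha_\gamma$ produces. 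Checking that $w(\gamma)$ is a unitary in $F((A\rtimes_\alpha G)^{\hat\alpha})$ and lands in the fixed-point algebra (it does, since $A^{\hat\alpha}$-commutation and fixedness follow from the $\hat\alpha$-invariance of the $f_g$) gives approximate representability of $\hat\alpha$. This is essentially the stably-projectionless version of \cite[Lemma 3.8]{I1}, with positive contraction lifts replacing projection lifts.

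Conversely, if $\hat\alpha$ is approximately representable, pick implementing elements $\{w(\gamma)\}_{\gamma \in \hat G}$ in $((A\rtimes_\alpha G)^{\hat\alpha})_\infty = (A)_\infty$ — here using that the fixed-point algebra of the dual action is $A$ itself inside $A \rtimes_\alpha G$ — forming a unitary representation of $\hat G$ in $F(A)$ and satisfying $\hat\alpha_\gamma(x) = w(\gamma) x w(\gamma)^*$. Then I would set $e_g := \frac{1}{|G|}\sum_{\gamma \in \hat G} \gamma(g)\, w(\gamma) \in F(A)$; by Fourier inversion these are orthogonal projections summing to $1$, and the covariance $\alpha_g(w(\gamma)) = \gamma(g) w(\gamma)$ — which follows because $\alpha_g$ acts on the coefficient algebra $A$ while $w(\gamma)$ was built from the dual side, or more directly because $\lambda_g w(\gamma) \lambda_g^* = \gamma(g) w(\gamma)$ in $A \rtimes_\alpha G$ — translates into $\alpha_g(e_h) = e_{gh}$, giving the Rohlin property. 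This last covariance identity, Proposition~\ref{pro:rohlin unitary}, already packages the equivalence between Rohlin projections and a $\hat G$-representation $u$ with $\alpha_g(u(\gamma)) = \gamma(g) u(\gamma)$, so the argument really reduces to: approximate representability of $\hat\alpha$ produces exactly such a representation in $F(A)$, and vice versa.

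The main obstacle is bookkeeping across the several ``$\infty$'' algebras: one must keep straight whether a sequence lives in $A^\infty$, $A_\infty$, $(A \rtimes_\alpha G)^\infty$, or the fixed-point versions, and verify that the algebraic identities (multiplicativity of $\gamma \mapsto w(\gamma)$, the implementing relation, orthogonality of the $e_g$) hold not on the nose but after multiplying by elements of the relevant algebra, i.e.\ modulo the appropriate annihilator ideal. The reindexing lemma (Lemma~\ref{lem:dual}) is the tool that lets me replace an element of $A^\infty \cap B'$ that agrees with an $\hat\alpha$-translate on all of $A$ by an honest element of $(A^{\hat\alpha})^\infty \cap B'$; I expect to invoke it exactly when moving the implementing elements into the fixed-point central sequence algebra. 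The computations themselves are the same finite sums that appear in Lemma~\ref{lem:key rohlin}, so once the index bookkeeping is set up correctly the verifications are routine, and (ii) then follows from (i) by duality together with the stability invariance noted above.
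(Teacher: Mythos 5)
Your converse direction of (i) is essentially the paper's argument and is fine (up to a harmless $\gamma(g)$ versus $\overline{\gamma(g)}$ bookkeeping), but the forward direction of (i) as you set it up does not work. The elements $w(\gamma)=\sum_{g\in G}\overline{\gamma(g)}\,f_g\lambda_g$ are \emph{not} fixed by $\hat{\alpha}$: since $\hat{\alpha}_{\gamma'}(\lambda_g)=\gamma'(g)\lambda_g$, one has $\hat{\alpha}_{\gamma'}(w(\gamma))=\sum_g\overline{\gamma(g)}\gamma'(g)f_g\lambda_g\neq w(\gamma)$, so your parenthetical justification is exactly backwards. Consequently $w(\gamma)$ does not lie in $\bigl((A\rtimes_{\alpha}G)^{\hat{\alpha}}\bigr)_{\infty}=A_{\infty}$, as Definition \ref{def:app rep} requires (the implementing elements must be sequences from $(A\rtimes_\alpha G)^{\hat\alpha}=A$), and $f_g\lambda_g$ is not even close to $A$ modulo the annihilator of $A\rtimes_\alpha G$ (apply the canonical conditional expectation onto $A$). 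The correct construction, which is the paper's, uses no $\lambda_g$ at all: lift the Rohlin unitaries $u(\gamma)=\sum_g\overline{\gamma(g)}e_g$ themselves to $w(\gamma)\in A_\infty$ and verify $w(\gamma)^*a\lambda_g w(\gamma)=\gamma(g)a\lambda_g$ from $\alpha_g(w(\gamma))a=\gamma(g)w(\gamma)a$, so that $\hat{\alpha}_\gamma=\mathrm{Ad}(w(\gamma)^*)$ on $A\rtimes_\alpha G$ asymptotically.

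The reduction of (ii) to (i) is also not justified. Applying (i) to $(A\rtimes_\alpha G,\hat{\alpha})$ yields ``$\hat{\alpha}$ Rohlin iff $\hat{\hat{\alpha}}$ approximately representable,'' so you still need: $\alpha$ approximately representable iff $\hat{\hat{\alpha}}\cong\alpha\otimes\mathrm{Ad}(\rho)$ on $A\otimes K(\ell^2(G))$ is. Your justification treats this as invariance under stabilization, i.e.\ as if the double dual were $\alpha\otimes\mathrm{id}$; but $\alpha\otimes\mathrm{Ad}(\rho)$ is only exterior equivalent to $\alpha\otimes\mathrm{id}$, and approximate representability is a statement about central sequences of the \emph{fixed-point algebra}, which for $\alpha\otimes\mathrm{Ad}(\rho)$ is isomorphic to $A\rtimes_\alpha G$, not to $A^{\alpha}\otimes K(\ell^2(G))$. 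Tensoring implementing unitaries with an approximate unit of $K(\ell^2(G))$ does not produce elements commuting with that fixed-point algebra, and the direction needed for the backward half of (ii) --- passing from approximate representability of $\alpha\otimes\mathrm{Ad}(\rho)$ down to that of $\alpha$ --- is precisely the nontrivial cutting-down step. The paper proves (ii) directly: for the forward half it forms $z(g)=w(g)^*\lambda_g\in(A\rtimes_\alpha G)_\infty$ and applies Proposition \ref{pro:rohlin unitary}; for the backward half it lifts the Rohlin unitaries of $\hat{\alpha}$ to $z(g)\in(A\rtimes_\alpha G)_\infty$, considers $\lambda_g z(g)^*$, and applies Lemma \ref{lem:dual} \emph{twice} (first using $\hat{\alpha}$-invariance to land in $A^\infty\cap(A^{\alpha})'$, then using $\alpha$-invariance of the result to land in $(A^{\alpha})_\infty$). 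In your architecture Lemma \ref{lem:dual} is never actually invoked where this work has to happen, so (ii) remains unproved.
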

\begin{proof}
(i) Assume that $\alpha$ has the Rohlin property. Then there exists a unitary representation $u$ of $\hat{G}$ on $F(A)$ such that 
$\alpha_{g} (u(\gamma))= \gamma (g) u (\gamma)$ for any $g\in G$ and $\gamma\in \hat{G}$ by Proposition \ref{pro:rohlin unitary}. 
Choose elements $\{w (\gamma )\}_{\gamma\in \hat{G}}$ in $A_{\infty}$ such that $[w(\gamma )]=u(\gamma )$, and let $\lambda_g$  be the implementing 
unitary of $\alpha_{g}$ in $M(A\rtimes_{\alpha} G)$. 
Then we have 
$$
a\lambda_g w(\gamma)\lambda_{g}^* =a\gamma (g)w(\gamma)
$$ 
for any $a\in A$, $g\in G$ and $\gamma\in\hat{G}$. This equation implies 
$$
w({\gamma})^* a\lambda_{g}w(\gamma ) =\gamma (g)a\lambda_g, 
$$
and hence 
$$
\hat{\alpha}_{\gamma}(x) =w(\gamma )^*x w(\gamma )
$$
for any $x\in A\rtimes_{\alpha} G$ and $\gamma\in \hat{G}$. Since we have $(A\rtimes_{\alpha}G)^{\hat{\alpha}}=A$, $\hat{\alpha}$ is approximately representable. 
The converse follows from a similar computation. 

(ii) Assume that $\alpha$ is approximately representable, and take elements $\{w(g)\}_{g\in G}$ in $(A^{\alpha})_{\infty}$ as in Definition \ref{def:app rep}. 
Then we have 
$$
\lambda_g a \lambda_g^*=w(g)aw(g)^* \quad a\in A, g\in G
$$ 
and 
$$
\lambda_g w(h) \lambda_g^* = w(h) \quad g,h\in G
$$
where $\lambda_g$ is the implementing unitary of $\alpha_{g}$ in $M(A\rtimes_{\alpha} G)$. 
Put $z(g):=w(g)^*\lambda_g\in (A\rtimes_{\alpha}G)^{\infty}$ for any $g\in G$. By the equations above, we have 
$$
a\lambda_h z(g) =a\lambda_h w(g)^*\lambda_g =aw(g)^*\lambda_h\lambda_g =aw(g)^*\lambda_g\lambda_h=w(g)^*\lambda_g a\lambda_h=z(g)a\lambda_h
$$
for any $a\in A$ and $g,h\in G$. Therefore $\{z_g\}_{g\in G}$ are elements in $(A\rtimes_{\alpha}G)_{\infty}$. 
Define a map $v$ from $G$ to $F(A\rtimes_{\alpha}G)$ by $v(g)=[z(g)]$. 
Then it can be easily checked that $v$ is a unitary representation and $\hat{\alpha}_{\gamma}(v(g))=\gamma(g)v(g)$ for any $\gamma\in \hat{G}$ and $g\in G$. 
Proposition \ref{pro:rohlin unitary} implies that $\hat{\alpha}$ has the Rohlin property. 

Conversely, assume that $\hat{\alpha}$ has the Rohlin property. Then there exists a unitary representation $u$ of $G$ on $F(A\rtimes_{\alpha}G)$ such that 
$\hat{\alpha}_{\gamma}(u(g))=\gamma (g) u(g)$ for any $\gamma\in \hat{G}$ and $g\in G$ by Proposition \ref{pro:rohlin unitary}. 
Choose elements $\{z(g) \}_{g\in G}$ in $(A\rtimes_{\alpha}G)_{\infty}$ such that $[z(g)]=u(g)$. Then we have 
$$ 
\hat{\alpha}_{\gamma} (\lambda_g z(g)^*)x= \gamma (g)\lambda_g \overline{\gamma (g)} z(g)^*x= \lambda_g z(g)^*x 
$$
for any $x\in A\rtimes_{\alpha}G$, $g\in G$ and $\gamma\in\hat{G}$ where $\lambda_g$ is the implementing unitary of $\alpha_{g}$ in $M(A\rtimes_{\alpha} G)$. 
Since we have $\lambda_g z(g)^*b =b\lambda_g z(g)^*$ for any $b\in A^{\alpha}$ and $(A\rtimes_{\alpha}G)^{\hat{\alpha}}=A$, Lemma \ref{lem:dual} implies that there exist elements 
$\{y(g) \}_{g\in G}$ in $A^{\infty}\cap (A^{\alpha})^{\prime}$ such that 
$y(g)x=\lambda_g z(g)^* x$ for any $x\in A\rtimes_{\alpha}G$ and $g\in G$. 
For any $a\in A$ and $g,h\in G$, we have 
$$
\alpha_h (y(g))a =\lambda_h\lambda_g z(g)^*\lambda_h^*a =\lambda_h\lambda_g\lambda_h^*a z(g)^*=\lambda_g z(g)^* a=y(g)a. 
$$
Hence there exist elements $\{w(g) \}_{g\in G}$ in $(A^{\alpha})_{\infty}$ such that $w(g)a=\lambda_g z(g)^*a$ for any $a\in A$ and $g\in G$ by Lemma \ref{lem:dual}. 
It can be easily checked that a map $v$ from $G$ to $F(A^{\alpha})$ given by $v(g)=[w(g)]$ is a unitary representation of $G$ and $\alpha_g(a)=w(g)aw(g)^*$ for any 
$a\in A$ and $g\in G$. Consequently $\alpha$ is approximately representable. 
\end{proof}

By Theorem \ref{thm:Rohlin}, Proposition \ref{thm:dual} and the Takesaki-Takai duality, we can show the following classification result of some approximately representable actions. 
See \cite[Corollary 3.9]{I1} for the unital case. 
\begin{cor}\label{cor:app-classification}
Let $\alpha$ and $\beta$ be actions of a finite abelian group $G$ on a separable C$^*$-algebra $A$, and let $e_{\alpha}$ and $e_{\beta}$ be the projections in $M(A\rtimes_{\alpha}G)$ and 
$M(A\rtimes_{\beta}G)$, respectively, defined in Section \ref{sec:crossed}. 
Assume that $\alpha$ and $\beta$ are approximately representable and $A\rtimes_{\alpha}G$ and $A\rtimes_{\beta}G$ have almost stable rank one. 
Then $\alpha$ and $\beta$ are conjugate if and only if there exists an isomorphism $\theta$ from $A\rtimes_{\alpha}G$ onto $A\rtimes_{\beta}G$ such that 
$\theta \circ \hat{\alpha}_{\gamma} \circ \theta^{-1}$ is approximately unitarily equivalent to $\hat{\beta}_{\gamma}$ for any $\gamma\in\hat{G}$ and 
$\bar{\theta} (e_{\alpha})$ is Murray-von Neumann equivalent to $e_{\beta}$ (in $M(A\rtimes_{\beta}G)$). 
\end{cor}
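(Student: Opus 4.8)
The plan is to follow the proof of \cite[Corollary 3.9]{I1}, substituting Theorem \ref{thm:Rohlin} for the corresponding classification of Rohlin actions available in the unital case and using Takesaki--Takai duality to pass between $\alpha$ and its dual action $\hat{\alpha}$. The ``only if'' direction is immediate: a conjugating automorphism $\varphi$ of $A$ induces $\theta := \varphi \rtimes G \colon A \rtimes_\alpha G \to A \rtimes_\beta G$ carrying $\lambda^\alpha_g$ to $\lambda^\beta_g$, so that $\theta \circ \hat{\alpha}_\gamma \circ \theta^{-1} = \hat{\beta}_\gamma$ for every $\gamma \in \hat{G}$ and $\bar{\theta}(e_\alpha) = e_\beta$, which in particular is Murray--von Neumann equivalent to $e_\beta$.

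For the ``if'' direction, suppose $\theta$ is as in the statement. Since $\alpha$ and $\beta$ are approximately representable, Proposition \ref{thm:dual}(ii) shows that $\hat{\alpha}$ and $\hat{\beta}$ are actions of $\hat{G}$ with the Rohlin property; thus $\theta \circ \hat{\alpha} \circ \theta^{-1}$ and $\hat{\beta}$ are two Rohlin actions on the separable, almost-stable-rank-one C$^*$-algebra $A \rtimes_\beta G$ whose members are approximately unitarily equivalent one $\gamma$ at a time. Theorem \ref{thm:Rohlin} then produces an approximately inner $\psi \in \mathrm{Aut}(A \rtimes_\beta G)$ such that $\Theta := \psi \circ \theta$ intertwines $\hat{\alpha}$ and $\hat{\beta}$ exactly. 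Crossing by the dual actions and invoking Takesaki--Takai duality, $\Theta$ induces an isomorphism $\widehat{\Theta} \colon A \rtimes_\alpha G \rtimes_{\hat{\alpha}} \hat{G} \to A \rtimes_\beta G \rtimes_{\hat{\beta}} \hat{G}$ that intertwines the bidual $G$-actions and extends $\Theta$; by the description in Section \ref{sec:crossed} the projection $e_\alpha$ corresponds to the $\mathrm{Ad}(\rho)$-fixed rank-one projection $1_{M(A)} \otimes e$, whose corner, with the restricted bidual action, is equivariantly isomorphic to $(A, \alpha)$, and similarly for $\beta$. Hence, once I know that $\bar{\Theta}(e_\alpha)$ can be carried onto $e_\beta$ by a $\hat{\hat{\beta}}$-invariant partial isometry, compressing $\widehat{\Theta}$ to these corners yields $(A, \alpha) \cong (A, \beta)$.

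The hard part is exactly this last comparison of projections. I would first argue that $\psi$, being approximately inner, preserves the traces of $A \rtimes_\beta G$, so that by the comparison theory of Section \ref{sec:strict comparison} applied to $M(A \rtimes_\beta G)$ it fixes the Murray--von Neumann class of $e_\beta$; together with the hypothesis $\bar{\theta}(e_\alpha) \sim e_\beta$ this gives $\bar{\Theta}(e_\alpha) = \bar{\psi}(\bar{\theta}(e_\alpha)) \sim e_\beta$ in $M(A \rtimes_\beta G)$. Both $\bar{\Theta}(e_\alpha)$ and $e_\beta$ are $\hat{\hat{\beta}}$-invariant, and since $(A \rtimes_\beta G \rtimes_{\hat{\beta}} \hat{G})^{\hat{\hat{\beta}}}$ is canonically isomorphic to $A \rtimes_\beta G$, this Murray--von Neumann equivalence actually takes place in the multiplier algebra of the fixed-point algebra and is therefore implemented by a $\hat{\hat{\beta}}$-invariant partial isometry, as needed. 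The two delicate steps --- controlling the behaviour of the approximately inner $\psi$ on $e_\beta$ inside the multiplier algebra, and the Takesaki--Takai bookkeeping relating $e_\alpha$, $e_\beta$ and the bidual actions --- are precisely the ones treated in the unital case in \cite[Corollary 3.9]{I1}, and I expect them to carry over here with the comparison input from Section \ref{sec:strict comparison}.
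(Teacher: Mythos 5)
Your overall strategy coincides with the paper's: use Proposition \ref{thm:dual} to see that $\hat{\alpha}$ and $\hat{\beta}$ have the Rohlin property, apply Theorem \ref{thm:Rohlin} to upgrade $\theta$ to an isomorphism intertwining the dual actions exactly, pass through Takesaki--Takai duality, and then cut down by the projections $e_{\alpha}$, $e_{\beta}$ (identified with $1\otimes e$) using an equivariant partial isometry. The equivariance argument you give is also the right one: the implementing partial isometry can be chosen in $M(A\rtimes_{\beta}G)$, which sits inside $M(A\rtimes_{\beta}G\rtimes_{\hat{\beta}}\hat{G})^{\bar{\hat{\hat{\beta}}}}$ because $\hat{\hat{\beta}}$ fixes $A\rtimes_{\beta}G$ pointwise.

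The one step that does not hold up at the stated level of generality is your justification that composing with the approximately inner automorphism $\psi$ does not destroy the Murray--von Neumann equivalence. You argue that $\psi$ preserves traces and then invoke the comparison theory of Section \ref{sec:strict comparison} (in effect Proposition \ref{pro:comparison multiplier}) inside $M(A\rtimes_{\beta}G)$. But that proposition requires $A\rtimes_{\beta}G$ to be simple, exact, stably projectionless and to have strict comparison; none of these is a hypothesis of the corollary, and indeed no traces are assumed to exist on $A\rtimes_{\beta}G$, let alone to determine comparison. So this argument fails for a general separable $A$ whose crossed products merely have almost stable rank one. The fix is to use only the hypothesis that is actually present, as the paper does: by almost stable rank one and \cite[Theorem 3]{CEI} (see the remark following the corollary in the paper), Murray--von Neumann equivalence of $\bar{\theta}(e_{\alpha})$ and $e_{\beta}$ in $M(A\rtimes_{\beta}G)$ is equivalent to Cuntz equivalence of the corresponding positive elements $\theta(s_{\alpha})$ and $s_{\beta}$ in $A\rtimes_{\beta}G$, and Cuntz equivalence classes are preserved by approximately inner automorphisms; hence $\bar{\psi}(\bar{\theta}(e_{\alpha}))$ is still Murray--von Neumann equivalent to $e_{\beta}$, with no reference to traces. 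With that repair, the remainder of your argument agrees with the paper's proof.
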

\begin{proof}
The only if part is obvious. We will show the if part. By Theorem \ref{thm:Rohlin} and Proposition \ref{thm:dual}, there exists an approximately inner automorphism $\theta_1$ of 
$A\rtimes_{\beta}G$ such that $\theta_1\circ \theta \circ \hat{\alpha}_{\gamma} \circ \theta^{-1} \circ \theta_1^{-1} =\hat{\beta}_{\gamma}$ for any $\gamma\in\hat{G}$. 
Replacing $\theta$ with $\theta_1\circ \theta$,  we may assume that $\theta$ is an isomorphism from $A\rtimes_{\alpha}G$ onto $A\rtimes_{\beta}G$ such that 
$\theta\circ \hat{\alpha}_{\gamma} =\hat{\beta}_{\gamma}\circ \theta$ for any $\gamma$ and $\bar{\theta} (e_{\alpha})$ is Murray-von Neumann equivalent to $e_{\beta}$ 
because $\theta_1$ is approximately inner. Then there exists an isomorphism $\Psi$ from $A\rtimes_{\alpha}G\rtimes_{\hat{\alpha}}\hat{G}$ onto $A\rtimes_{\beta}G\rtimes_{\hat{\beta}}\hat{G}$ 
such that $\Psi \circ \hat{\hat{\alpha}}_g =\hat{\hat{\beta}}_g\circ \Psi$ for any $g\in G$ and $\bar{\Psi}(e_{\alpha})$ is Murray-von Neumann equivalent to $e_{\beta}$ in 
$M(A\rtimes_{\beta}G\rtimes_{\hat{\beta}}\hat{G})^{\bar{\hat{\hat{\beta}}}}$. 
Hence the Takesaki-Takai duality theorem implies that 
there exists an automorphism $\Theta$ of $A\otimes K(\ell^2(G))$ such that $\Theta$ intertwines $\alpha\otimes \mathrm{Ad}(\rho )$ and 
$\beta\otimes \mathrm{Ad}(\rho )$. Let $e\in K(\ell^2(G))$ be the projection onto the constant functions. 
Then we have 
\begin{align*}
(A,\alpha) & \cong ((1\otimes e)(A\otimes K(\ell^2(G))) (1\otimes e), \alpha\otimes \mathrm{Ad}(\rho )|_{(1\otimes e)(A\otimes K(\ell^2(G))) (1\otimes e)} ) \\
           & \cong (\bar{\Theta}(1\otimes e)(A\otimes K(\ell^2(G))) \bar{\Theta}(1\otimes e),  \beta\otimes \mathrm{Ad}(\rho )|_{\bar{\Theta}(1\otimes e)(A\otimes K(\ell^2(G))) \bar{\Theta}(1\otimes e)} ).
\end{align*}
Since $\bar{\Psi}(e_{\alpha})$ is Murray-von Neumann equivalent to $e_{\beta}$ in $M(A\rtimes_{\beta}G\rtimes_{\hat{\beta}}\hat{G})^{\bar{\hat{\hat{\beta}}}}$, 
we see that $\bar{\Theta} (1\otimes e)$ is Murray-von Neumann equivalent to $1\otimes e$ in $M(A\otimes K(\ell^2(G)))^{\overline{\beta\otimes \mathrm{Ad} (\rho )}}$ 
(see Section \ref{sec:crossed}). 
Therefore we have 
\begin{align*}
(A,\alpha) & \cong (\bar{\Theta}(1\otimes e)(A\otimes K(\ell^2(G))) \bar{\Theta}(1\otimes e),  \beta\otimes \mathrm{Ad}(\rho )|_{\bar{\Theta}(1\otimes e)(A\otimes K(\ell^2(G))) \bar{\Theta}(1\otimes e)} ) \\
& \cong  ((1\otimes e)(A\otimes K(\ell^2(G))) (1\otimes e), \beta\otimes \mathrm{Ad}(\rho )|_{(1\otimes e)(A\otimes K(\ell^2(G))) (1\otimes e)} ) \\
& \cong (A, \beta ). 
\end{align*}
\end{proof}

\begin{rem}
Let $s$ be a strictly positive element in $A^{\alpha}$. Define a positive element $s_{\alpha}$ in $A\rtimes_{\alpha}G$ by 
$$
s_{\alpha}:=\frac{1}{|G|}\sum_{g\in G}s\lambda_g 
$$
where $\lambda_g$ is the implementing unitary of $\alpha_g$ in $M(A\rtimes_{\alpha}G)$. 
It can be checked that $\overline{s_{\alpha}(A\rtimes_{\alpha}G)}=e_{\alpha}(A\rtimes_{\alpha}G)$ and the Cuntz class $[s_{\alpha}]$ does not depend on the choice of $s$. 
Since $A\rtimes_{\alpha}G$ and $A\rtimes_{\beta}G$ have almost stable rank one,   
we see that the condition that $\bar{\theta} (e_{\alpha})$ is Murray-von Neumann equivalent to $e_{\beta}$ is equivalent to the condition that $\theta (s_{\alpha})$ is Cuntz equivalent to $s_{\beta}$ 
by \cite[Theorem 3]{CEI}. 
\end{rem}

We shall consider examples of approximately representable actions. 
\begin{Def}\label{def:locally}
Let $\mathcal{C}$ be a class of C$^*$-algebras. 
An action $\alpha$ of a finite group $G$ on a C$^*$-algebra $A$ is said to be \textit{locally representable} if 
there exists an $\alpha$-invariant increasing sequence of C$^*$-subalgebras $\{A_n\}_{n\in\mathbb{N}}$ whose union is dense in $A$, and a unitary representation $u^{(n)}$ of $G$ in $M(A_n)$  
such that  
$$
\alpha_g(a) =\mathrm{Ad}(u^{(n)}(g)) (a) 
$$
for any $a\in A_n$ and $g\in G$. 
Moreover if $A_n$ is contained in $\mathcal{C}$ for any $n\in\mathbb{N}$, we say that $\alpha$ is \textit{locally representable in $\mathcal{C}$}. 
\end{Def}

Note that we do not assume that $A_n\subset A_{n+1}$ is a nondegenerate inclusion in the definition above. 
Hence $M(A_n)$ may not be contained in $M(A_{n+1})$. 
We denote by $\mathcal{C}_{\mathrm{F}}$ the class of all finite dimensional C$^*$-algebras. 
We shall show that every locally representable action on a separable C$^*$-algebra is approximately representable. 

\begin{lem}\label{lem:app-unit}
Let $\alpha$ be an action of a finite group $G$ on a separable C$^*$-algebra $A$. 
Assume that there exists an $\alpha$-invariant increasing sequence of C$^*$-subalgebras $\{A_n\}_{n\in\mathbb{N}}$ whose union is dense in $A$. 
Then there exists an approximate unit $\{h_n\}_{n\in\mathbb{N}}$ for $A$ such that $h_n\in (A_n)^\alpha$ for any $n\in\mathbb{N}$. 
\end{lem}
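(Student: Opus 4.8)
The plan is to construct $\{h_n\}$ by a diagonal argument, obtaining each $h_n$ from a $G$-averaged approximate unit of the corresponding $A_n$. The one genuinely action-theoretic input is the averaging step; everything else is bookkeeping, made slightly delicate only because the inclusions $A_n\subset A_{n+1}$ are \emph{not} assumed nondegenerate, so one cannot simply extend an approximate unit of $A_1$ step by step and must rebuild it inside each $(A_n)^{\alpha}$ and then thread the pieces together using density of $\bigcup_n A_n$.

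First I would record the local averaging fact. Since $A$ is separable, each $A_n$ is separable, hence $\sigma$-unital, so it has a countable approximate unit $\{e_{n,k}\}_{k\in\mathbb{N}}$ with $0\le e_{n,k}\le 1$. Because $A_n$ is $\alpha$-invariant, for each $g\in G$ the sequence $\{\alpha_g(e_{n,k})\}_k$ is an approximate unit for $\alpha_g(A_n)=A_n$: for $a\in A_n$ one has $\alpha_g(e_{n,k})\,a=\alpha_g\big(e_{n,k}\,\alpha_g^{-1}(a)\big)\to a$ since $\alpha_g^{-1}(a)\in A_n$, and symmetrically on the right. Hence $h_{n,k}:=\frac{1}{|G|}\sum_{g\in G}\alpha_g(e_{n,k})$ satisfies $0\le h_{n,k}\le 1$ and $h_{n,k}\in (A_n)^{\alpha}$, and $\{h_{n,k}\}_k$ is again an approximate unit for $A_n$, being a convex combination of approximate units of $A_n$.

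Next I would combine these across $n$. Fix a dense sequence $\{c_j\}_{j\in\mathbb{N}}$ in $A$. Since $\overline{\bigcup_n A_n}=A$ and $(A_n)_n$ is increasing, $\mathrm{dist}(c_j,A_n)\to 0$ as $n\to\infty$ for each fixed $j$; so one can choose $\ell\colon\mathbb{N}\to\mathbb{N}$ with $\ell(n)\to\infty$ and reals $\delta_n\to 0$ such that $\mathrm{dist}(c_j,A_n)<\delta_n$ whenever $j\le\ell(n)$. For each $n$ pick $b_{j,n}\in A_n$ with $\|b_{j,n}-c_j\|<\delta_n$ for $j\le\ell(n)$, and then, using that $\{h_{n,k}\}_k$ is an approximate unit for $A_n$, choose $k$ (depending on $n$) so that $h_n:=h_{n,k}$ satisfies $\|h_n b_{j,n}-b_{j,n}\|<\delta_n$ and $\|b_{j,n}h_n-b_{j,n}\|<\delta_n$ for all $j\le\ell(n)$. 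Then $h_n\in (A_n)^{\alpha}$ with $0\le h_n\le 1$, and for each fixed $j$, once $n$ is large enough that $\ell(n)\ge j$ we get $\|h_n c_j-c_j\|<3\delta_n$ and $\|c_j h_n-c_j\|<3\delta_n$, so $h_n c_j\to c_j$ and $c_j h_n\to c_j$. As $\{c_j\}$ is dense and $\sup_n\|h_n\|\le 1$, it follows that $h_n a\to a$ and $a h_n\to a$ for every $a\in A$, i.e. $\{h_n\}_n$ is an approximate unit for $A$ with $h_n\in (A_n)^{\alpha}$ for all $n$. I do not expect any serious obstacle here; the only thing to watch is the index-matching in the last step (which is why the approximate unit is reconstructed inside each $A_n$ rather than carried over from $A_1$).
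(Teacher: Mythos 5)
Your proof is correct and follows essentially the same route as the paper: the paper simply invokes nondegeneracy of the inclusion $(A_n)^{\alpha}\subset A_n$ (which is exactly your $G$-averaging of an approximate unit of $A_n$) and then threads the resulting positive contractions together along increasing finite subsets $F_n\subset A_n$ with dense union, which is the same diagonal bookkeeping you carry out with a dense sequence in $A$ approximated inside each $A_n$.
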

\begin{proof}
Let $\{F_{n}\}_{n\in\mathbb{N}}$ be an increasing sequence of finite subsets in $A$ such that $F_n\subset A_n$ and $\cup_{n=1}^\infty F_n$ is dense in $A$. 
Since $(A_n)^{\alpha}\subset A_n$ is a nondegenerate inclusion, there exists a positive contraction $h_n$ in $(A_n)^{\alpha}$ such that 
$$
\| h_n x-x \| <\frac{1}{n}
$$
for any $x\in F_n$. Then $\{h_n\}_{n\in\mathbb{N}}$ is an approximate unit for $A$. 
\end{proof}

\begin{pro}\label{pro:loc-app}
If $\alpha$ is a locally representable action of a finite group $G$ on a separable C$^*$-algebra $A$, then $\alpha$ is approximately representable. 
\end{pro}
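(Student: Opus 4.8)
The plan is to build the implementing central sequence directly out of the local unitaries $u^{(n)}(g)\in M(A_n)$, cut down by an approximate unit taken from the fixed point algebras $(A_n)^{\alpha}$ so that everything lands in $A$ itself (recall the $u^{(n)}(g)$ live in the a priori unrelated multiplier algebras $M(A_n)$, so they cannot be used as a central sequence as they stand). First I would apply Lemma \ref{lem:app-unit} to get an approximate unit $\{h_n\}_{n\in\mathbb{N}}$ for $A$ with $h_n\in (A_n)^{\alpha}$ and $\|h_n\|\leq 1$; then $\{h_n^{1/2}\}$ is also an approximate unit for $A$, hence for $A^{\alpha}$. Because $h_n\in (A_n)^{\alpha}$ and $\alpha_g|_{A_n}=\mathrm{Ad}(u^{(n)}(g))$, we have $u^{(n)}(g)h_n u^{(n)}(g)^{*}=h_n$, so $u^{(n)}(g)h_n^{1/2}u^{(n)}(g)^{*}=h_n^{1/2}$ by functional calculus. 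Put $w(g)_n:=h_n^{1/2}\,u^{(n)}(g)\,h_n^{1/2}\in A_n$ and $w(g):=[(w(g)_n)_n]\in A^{\infty}$.

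Next I would verify the properties required by Definition \ref{def:app rep} (for abelian $G$) and by the remark following it (in general). Throughout, the estimates only need the dense subalgebras $\bigcup_m A_m\subseteq A$ and $\bigcup_m (A_m)^{\alpha}\subseteq A^{\alpha}$ (the latter is dense because $\frac1{|G|}\sum_{g\in G}\alpha_g$ maps $A_m$ into $(A_m)^{\alpha}$), together with $\|w(g)_n\|\leq 1$ and the key observation that for $b\in (A_m)^{\alpha}$ and $n\geq m$ one has $u^{(n)}(g)bu^{(n)}(g)^{*}=\alpha_g(b)=b$, i.e.\ $u^{(n)}(g)$ commutes with $b$. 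From $u^{(n)}(k)h_n^{1/2}u^{(n)}(k)^{*}=h_n^{1/2}$ and the homomorphism property of $u^{(n)}$ one gets $\alpha_k(w(g)_n)=w(kgk^{-1})_n$, so $\alpha_k(w(g))=w(kgk^{-1})$ (in particular $w(g)\in (A^{\alpha})^{\infty}$ when $G$ is abelian). Using $h_n^{1/2}b\to b$, $bh_n^{1/2}\to b$ and the commutation above, both $w(g)_nb$ and $bw(g)_n$ differ from $bu^{(n)}(g)$ by null sequences, so $w(g)\in A^{\infty}\cap (A^{\alpha})'$; thus $w(g)\in (A^{\alpha})_{\infty}$ for abelian $G$, and $[w(g)]$ makes sense in $F(A^{\alpha},A)$ in general.

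Then I would check the algebraic relations by the same bookkeeping: testing against $b\in (A_m)^{\alpha}$ with $n\geq m$ and sliding the $h_n$'s past $b$ gives $w(g)_n^{*}w(g)_nb\to b$, $w(g)_nw(g)_n^{*}b\to b$ and $w(g)_nw(h)_nb-w(gh)_nb\to 0$, so $u(g):=[w(g)]$ is a unitary representation of $G$ with $\alpha_k(u(g))=u(kgk^{-1})$. Finally, for $a\in A_m$ and $n\geq m$, $w(g)_n\,a\,w(g)_n^{*}=h_n^{1/2}u^{(n)}(g)(h_n^{1/2}ah_n^{1/2})u^{(n)}(g)^{*}h_n^{1/2}$; since $h_n^{1/2}ah_n^{1/2}\to a$ and $u^{(n)}(g)xu^{(n)}(g)^{*}=\alpha_g(x)$ for $x\in A_n$, this tends to $\alpha_g(a)$, so by density $\alpha_g(a)=w(g)aw(g)^{*}$ in $A^{\infty}$ for every $a\in A$. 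Together these say exactly that $\alpha$ is approximately representable. I do not expect a genuine obstacle here; the only point to be careful about is arranging the estimates so that the non-unitality is absorbed entirely by the cut-down with $h_n$, so that one never has to know how $M(A_n)$ sits inside, or fails to sit inside, $M(A_{n+1})$.
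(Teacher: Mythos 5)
Your proposal is correct and follows essentially the same route as the paper: you invoke Lemma \ref{lem:app-unit} to get an approximate unit $\{h_n\}$ with $h_n\in (A_n)^{\alpha}$ and then use $w(g)_n=h_n^{1/2}u^{(n)}(g)h_n^{1/2}$, which, since $u^{(n)}(g)$ commutes with $h_n$, is exactly the paper's choice $u^{(n)}(g)h_n$. The subsequent verifications (centrality relative to $A^{\alpha}$, unitarity and multiplicativity of $[w(g)]$, implementation of $\alpha_g$, and the equivariance $\alpha_k(u(g))=u(kgk^{-1})$) are the same checks the paper leaves to the reader, carried out correctly.
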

\begin{proof}
Let $\{A_n\}_{n\in\mathbb{N}}$ be an increasing sequence of C$^*$-subalgebras of $A$ and $\{u^{(n)}\}_{n\in\mathbb{N}}$ unitary representations as in Definition \ref{def:locally}. 
There exists an approximate unit $\{h_n\}_{n\in\mathbb{N}}$ for $A$ such that $h_n\in (A_n)^\alpha$ for any $n\in\mathbb{N}$ by Lemma \ref{lem:app-unit}. 
For any $g\in G$, put $w(g):=(u^{(n)}(g)h_n)_n \in A^{\infty}$. Then we have 
$$
\alpha_g(a) =w(g) a w(g)^*
$$ 
in $A^{\infty}$ for any $a\in A$ and $g\in G$, and hence $w(g)\in A^{\infty}\cap (A^{\alpha})^{\prime}$. 
Since we have $u^{(n)}(g)h_n=h_nu^{(n)}(g)$ for any $n\in\mathbb{N}$ and $g\in G$, it can be easily checked that a map $v$ from $G$ to $F(A^{\alpha}, A)$ given by $v(g)=[w(g)]$ is a unitary representation of $G$ and 
$$
\alpha_g (v(h))= v(ghg^{-1})
$$ 
for any $g,h\in G$. 
Therefore $\alpha$ is approximately representable. 
\end{proof}

Izumi showed that if $\beta$ is an outer action of a finite group $G$ on a separable simple unital nuclear C$^*$-algebra $A$, 
then the action $\beta\otimes \mathrm{id}$ of $G$ on $A\otimes\mathcal{O}_2\cong \mathcal{O}_2$ has the Rohlin property \cite[Corollary 4.3]{I1}. 
This result can be regarded as an equivariant version of Kirchberg-Phillips's theorem \cite[Theorem 3.8]{KP}. 
We cannot show such a statement for $\mathcal{W}_2$. 
Indeed, Example \ref{ex:non-Rohlin} shows that there exists an outer action $\beta$ on $M_{2^{\infty}}$ such that the action $\beta\otimes \mathrm{id}$ on 
$M_{2^{\infty}}\otimes\mathcal{W}_2\cong\mathcal{W}_2$ does not have the Rohlin property. Note that this action is not strongly outer and 
$(M_{2^{\infty}}\otimes\mathcal{W}_2)\rtimes_{\beta\otimes \mathrm{id}} \mathbb{Z}_2$ has two extremal tracial states. 
Hence it is natural to consider the following question.

\begin{que}
Let $A$ be a separable simple nuclear C$^*$-algebra with a unique tracial state and no unbounded traces, and let $\beta$ be a strongly outer action of a finite group $G$ on $A$. 
Does the action $\beta\otimes \mathrm{id}$ of $G$ on $A\otimes\mathcal{W}_2$ have the Rohlin property?
\end{que}

We shall show the following (non-trivial) partial result of the question above as an application of Corollary \ref{cor:app-classification}. 

\begin{cor}\label{thm:rohlin-example}
Let $A$ be a simple AF algebra with a unique tracial state and no unbounded traces, and let $\beta$ be a strongly outer action of a finite abelian group $G$ on $A$. 
Assume that $\beta$ is approximately representable and $A\rtimes_{\beta}G$ is an AF algebra. 
Then 
$$
(A\otimes\mathcal{W}_2,\beta\otimes \mathrm{id}) \cong (\mathcal{W}_2, \nu^{G})
$$ 
where $\nu^{G}$ is the action in Example \ref{ex:model rohlin}, and hence $\beta\otimes \mathrm{id}$ has the Rohlin property. 
\end{cor}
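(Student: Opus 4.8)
The plan is to apply the classification result Corollary~\ref{cor:app-classification} to the two actions $\alpha:=\beta\otimes\mathrm{id}$ and $\nu^{G}$, both regarded as actions of $G$ on $\mathcal{W}_2$. First I would record the structural facts. Since $A$ is a simple AF algebra with a unique tracial state and no unbounded traces, $A\otimes\mathcal{W}_2\cong\mathcal{W}_2$ by Robert's classification theorem. As $\beta$ is strongly outer (hence outer), $A\rtimes_{\beta}G$ is simple; by Proposition~\ref{pro:strongly outer} it has a unique tracial state, and using that $G$ is finite and $A$ has no unbounded traces one checks that $A\rtimes_{\beta}G$ has no unbounded traces either. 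Since $A\rtimes_{\beta}G$ is AF by hypothesis, $(A\otimes\mathcal{W}_2)\rtimes_{\alpha}G\cong(A\rtimes_{\beta}G)\otimes\mathcal{W}_2\cong\mathcal{W}_2$, and likewise $\mathcal{W}_2\rtimes_{\nu^{G}}G\cong(M_{|G|^{\infty}}\rtimes_{\mu^{G}}G)\otimes\mathcal{W}_2\cong\mathcal{W}_2$ because $M_{|G|^{\infty}}\rtimes_{\mu^{G}}G$ is a simple unital AF algebra with a unique tracial state. In particular both crossed products are simple, separable, exact, $\sigma$-unital, stably projectionless and $\mathcal{Z}$-stable, hence have strict comparison and almost stable rank one.

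Next I would verify the hypotheses of Corollary~\ref{cor:app-classification}. For approximate representability: $\mu^{G}$ is locally representable in $\mathcal{C}_{\mathrm{F}}$ (use the finite-dimensional subalgebras $M_{|G|}(\mathbb{C})^{\otimes n}$ with implementing unitaries $\lambda_{g}^{\otimes n}$, which are $\mu^{G}$-fixed since $G$ is abelian), so $\mu^{G}$ is approximately representable by Proposition~\ref{pro:loc-app}; as $\beta$ is approximately representable by hypothesis and tensoring with $\mathrm{id}_{\mathcal{W}_2}$ preserves approximate representability (by the device used in Example~\ref{ex:model rohlin}), both $\alpha$ and $\nu^{G}$ are approximately representable. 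For the dual actions: under the identifications above, $\hat{\alpha}$ corresponds to $\hat{\beta}\otimes\mathrm{id}$ and $\widehat{\nu^{G}}$ to $\widehat{\mu^{G}}\otimes\mathrm{id}$; by Proposition~\ref{thm:dual}(ii) the approximate representability of $\beta$ (resp.\ $\mu^{G}$) makes $\hat{\beta}$ (resp.\ $\widehat{\mu^{G}}$) have the Rohlin property, and tensoring with $\mathrm{id}_{\mathcal{W}_2}$ preserves the Rohlin property as in Example~\ref{ex:model rohlin}. Thus $\hat{\alpha}$ and $\widehat{\nu^{G}}$ are $\hat{G}$-actions on $\mathcal{W}_2$ with the Rohlin property, so by Corollary~\ref{cor:Razak-Rohlin} each is conjugate to $\nu^{\hat{G}}$, and hence there is an isomorphism $\theta\colon(A\otimes\mathcal{W}_2)\rtimes_{\alpha}G\to\mathcal{W}_2\rtimes_{\nu^{G}}G$ with $\theta\circ\hat{\alpha}_{\gamma}=\widehat{\nu^{G}}_{\gamma}\circ\theta$ for all $\gamma\in\hat{G}$.

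It remains to arrange that $\bar{\theta}(e_{\alpha})$ is Murray--von Neumann equivalent to $e_{\nu^{G}}$, and this is where I expect the one genuine observation to be needed. Since both crossed products are isomorphic to $\mathcal{W}_2$, they satisfy the hypotheses of Proposition~\ref{pro:comparison multiplier} and have a unique tracial state up to scalars, so equivalence of projections in the multiplier algebra is detected by the single trace value. From $e_{\alpha}=\frac{1}{|G|}\sum_{g\in G}\lambda_{g}$ and $\bar{\tau}(\lambda_{g})=0$ for $g\neq\iota$ one gets $\bar{\tau}(e_{\alpha})=\frac{1}{|G|}=\bar{\tau}(e_{\nu^{G}})$; since $\theta$ carries the unique tracial state to the unique tracial state, $\bar{\tau}(\bar{\theta}(e_{\alpha}))=\bar{\tau}(e_{\nu^{G}})$, whence $\bar{\theta}(e_{\alpha})$ is Murray--von Neumann equivalent to $e_{\nu^{G}}$ by Proposition~\ref{pro:comparison multiplier}. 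Corollary~\ref{cor:app-classification} then gives $(A\otimes\mathcal{W}_2,\beta\otimes\mathrm{id})\cong(\mathcal{W}_2,\nu^{G})$, and the Rohlin property of $\beta\otimes\mathrm{id}$ follows because it is a conjugacy invariant and $\nu^{G}$ has it.

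The step I expect to be the crux is precisely that Corollary~\ref{cor:app-classification} requires a \emph{single} isomorphism $\theta$ doing two jobs at once --- intertwining the dual actions up to approximate unitary equivalence and matching the $e$-projections up to Murray--von Neumann equivalence. The first job is handled by observing that the dual actions are themselves Rohlin actions on $\mathcal{W}_2$ and so are rigid via Corollary~\ref{cor:Razak-Rohlin}; the second then becomes automatic, since in $\mathcal{W}_2$ the Murray--von Neumann class of a multiplier projection is just its (unique) trace value and both $e$-projections have trace $1/|G|$. The remaining inputs --- that $\mu^{G}$, and hence $\nu^{G}$, is approximately representable, that $A\rtimes_{\beta}G$ inherits a unique tracial state and no unbounded traces, and that $\mathrm{id}_{\mathcal{W}_2}$-tensoring transports both the Rohlin property and approximate representability --- are routine applications of the material in Sections~\ref{sec:Pre}--\ref{sec:app-rep}.
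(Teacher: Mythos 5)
Your proof is correct and follows essentially the same strategy as the paper: identify both crossed products with $\mathcal{W}_2$, match $e_{\beta\otimes\mathrm{id}}$ and $e_{\nu^{G}}$ through the unique tracial state and Proposition \ref{pro:comparison multiplier}, and conclude with Corollary \ref{cor:app-classification}. The one genuine divergence is how the dual-action hypothesis of Corollary \ref{cor:app-classification} is met: you first upgrade $\theta$ to an exactly equivariant isomorphism by showing that $\hat{\alpha}$ and $\widehat{\nu^{G}}$ are Rohlin actions of $\hat{G}$ on $\mathcal{W}_2$ (via Proposition \ref{thm:dual} and the approximate-unit tensoring trick) and then invoking Corollary \ref{cor:Razak-Rohlin}, whereas the paper simply takes an \emph{arbitrary} isomorphism $\theta$ between the two crossed products and observes that the required approximate unitary equivalence of $\theta\circ\hat{\alpha}_{\gamma}\circ\theta^{-1}$ and $\widehat{\nu^{G}}_{\gamma}$ is automatic, since every automorphism of $\mathcal{W}_2$ is approximately inner by Razak's classification. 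Your route is valid, and it has the merit of making explicit two points the paper leaves implicit (that $\beta\otimes\mathrm{id}$ is approximately representable, and the trace computation $\bar{\omega}(e)=\tfrac{1}{|G|}$ via $\bar{\omega}(\lambda_g)=0$ for $g\neq\iota$), but it is heavier than needed: the Rohlin--duality machinery you run to force exact intertwining is precisely what the proof of Corollary \ref{cor:app-classification} performs internally, so that step is redone there anyway, while the paper's one-line observation already suffices.
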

\begin{proof}
 Proposition \ref{pro:strongly outer} and \cite[Theorem 3.1]{K} imply $A\rtimes_{\beta} G$ is a simple C$^*$-algebra with a unique tracial state. 
It is easy to see that $A\rtimes_{\beta} G$ has no unbounded traces. 
Since $(A\otimes\mathcal{W}_2)\rtimes_{\beta\otimes \mathrm{id}}G$ is isomorphic to $(A\rtimes_{\beta} G)\otimes \mathcal{W}_2$, 
$(A\otimes\mathcal{W}_2)\rtimes_{\beta\otimes \mathrm{id}}G$ is isomorphic to $\mathcal{W}_2$. 
On the other hand, $\mathcal{W}_2\rtimes_{\nu^G}G$ is isomorphic to $\mathcal{W}_2$ and Proposition \ref{pro:loc-app} implies that $\nu^{G}$ is approximately representable. 
Let $\theta$ be an isomorphism from $\mathcal{W}_2\rtimes_{\nu^G}G$ onto $(A\otimes\mathcal{W}_2)\rtimes_{\beta\otimes \mathrm{id}}G$. 
We denote by $\omega_{\beta\otimes\mathrm{id}}$ and $\omega_{\nu}$ the unique tracial state on $(A\otimes\mathcal{W}_2)\rtimes_{\beta\otimes \mathrm{id}}G$ and 
the unique tracial state on $\mathcal{W}_2\rtimes_{\nu^G}G$, respectively. 
Then we have 
$$
\bar{\omega}_{\beta\otimes\mathrm{id}} (e_{\alpha\otimes\mathrm{id}})=\bar{\omega}_{\nu}(e_{\nu^{G}}) =  \frac{1}{|G|}. 
$$
Hence we see that $\theta (e_{\nu^{G}})$ is Murray-von Neumann equivalent to $e_{\alpha\otimes\mathrm{id}}$ by Proposition \ref{pro:comparison multiplier}. 
Since every automorphism of $\mathcal{W}_2$ is approximately inner, we obtain the conclusion by Corollary \ref{cor:app-classification}. 
\end{proof}

\begin{rem}
(i) If an action $\beta$ on a C$^*$-algebra $A$ is locally representable in $\mathcal{C}_{\mathrm{F}}$, then $A\rtimes_{\beta}G$ is an AF algebra. \\
(ii) There exists a strongly outer action $\beta$ of a finite abelian group on $M_{2^\infty}$ such that $\beta$ is locally representable in $\mathcal{C}_{\mathrm{F}}$ and 
$\beta$ does not have the Rohlin property. Indeed, define an action $\beta$ of $\mathbb{Z}_2$ on $M_{2^\infty}$ by 
$$
\beta =\bigotimes_{n=1}^\infty\mathrm{Ad}(1_{2^{n-1}+1}\oplus -1_{2^{n-1}-1})\quad \mathrm{on}\quad \bigotimes_{n=1}^\infty M_{2^n}(\mathbb{C}) .
$$
Then $\beta$ is such an action (see \cite[Example 3.7]{Phi} for the proof). But the Corollary above shows that the action $\beta\otimes\mathrm{id}$ on 
$M_{2^{\infty}}\otimes\mathcal{W}_2\cong\mathcal{W}_2$ has the Rohlin property. 
\end{rem}

\section{Symmetries of $\mathcal{W}_2$}\label{sec:sym}
Let $\mathcal{C}_{\mathrm{R}}$ be the class of C$^*$-algebras $A$ in Robert's classification theorem \cite[Theorem 1.0.1]{Rob}, that is, 
$A$ is either a 1-dimensional NCCW complex with trivial $K_1$-group, an inductive limit C$^*$-algebra of such C$^*$-algebras or a C$^*$-algebra stably isomorphic to 
one such inductive limit. 
It is obvious that the outer actions on $\mathcal{W}_2$ in Example \ref{ex:model rohlin} and Example \ref{ex:non-Rohlin} are locally representable in $\mathcal{C}_{\mathrm{R}}$. 
In this section we shall classify the locally representable outer $\mathbb{Z}_2$-actions $(\mathcal{W}_2,\alpha)$ in $\mathcal{C}_{\mathrm{R}}$ 
up to conjugacy and cocycle conjugacy. We denote by $\tau$ the unique tracial state on $\mathcal{W}_2$ in this section. 

\begin{lem}\label{lem:closed-class}
Let $A$ be a simple stably projectionless C$^*$-algebra with no unbounded traces, and let $\alpha$ be an outer action of a finite group $G$ on $A$. 
Assume that $\alpha$ is locally representable in $\mathcal{C}_{\mathrm{R}}$. 
Then $A\rtimes_{\alpha}G$ is a simple stably projectionless C$^*$-algebra with no unbounded traces and is contained in $\mathcal{C}_{\mathrm{R}}$. 
\end{lem}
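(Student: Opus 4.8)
The plan is to use the local representability to present $A \rtimes_\alpha G$ as an inductive limit of algebras of the form $A_n \otimes C^*(G)$, from which membership in $\mathcal{C}_{\mathrm{R}}$ follows from closure properties of that class, and then to obtain simplicity, the trace statement, and stable projectionlessness by soft arguments.

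First I would fix, as in Definition~\ref{def:locally}, an $\alpha$-invariant increasing sequence $\{A_n\}_{n\in\mathbb{N}}$ of C$^*$-subalgebras of $A$ with dense union, each $A_n \in \mathcal{C}_{\mathrm{R}}$, together with unitary representations $u^{(n)}\colon G \to U(M(A_n))$ implementing $\alpha$ on $A_n$. Since $G$ is finite, reduced and full crossed products agree, and a density argument gives that $A \rtimes_\alpha G$ is the inductive limit of the subalgebras $A_n \rtimes_\alpha G$. For each $n$, writing $\lambda_g$ for the implementing unitaries, the elements $w^{(n)}_g := u^{(n)}(g)^*\lambda_g \in M(A_n \rtimes_\alpha G)$ form a unitary representation of $G$ that commutes with $A_n$ and, together with $A_n$, generates $A_n \rtimes_\alpha G$; this is the standard fact that an action implemented by a unitary representation in the multiplier algebra has crossed product $A_n \rtimes_\alpha G \cong A_n \otimes C^*(G)$. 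As $G$ is finite, $C^*(G)$ is finite-dimensional, so $A_n \otimes C^*(G)$ is a finite direct sum of matrix amplifications of $A_n$; since $\mathcal{C}_{\mathrm{R}}$ is closed under finite direct sums, under matrix amplification (it is closed under stable isomorphism), and under inductive limits, we conclude $A \rtimes_\alpha G = \varinjlim_n (A_n \otimes C^*(G)) \in \mathcal{C}_{\mathrm{R}}$.

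Simplicity of $A \rtimes_\alpha G$ follows from the hypothesis that $\alpha$ is outer, $A$ is simple, and $G$ is finite (see, e.g., \cite[Theorem 3.1]{K}). For the trace statement, note that $A \subset A \rtimes_\alpha G$ is a nondegenerate inclusion. If $\sigma$ is a densely defined lower semicontinuous trace on $A \rtimes_\alpha G$, then $\sigma$ is finite on the Pedersen ideal of $A \rtimes_\alpha G$, which meets $A$ in a dense ideal (since $(a-\epsilon)_+$ lies in the Pedersen ideal for every $a \in A_+$ and $\epsilon > 0$), so $\sigma|_A$ is a densely defined lower semicontinuous trace on $A$ and hence bounded by hypothesis; since an approximate unit of $A$ is an approximate unit of $A \rtimes_\alpha G$, it follows that $\sigma$ itself is bounded. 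Thus $A \rtimes_\alpha G$ has no unbounded traces.

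It remains to see that $A \rtimes_\alpha G$ is stably projectionless, and here the key observation is to pass to the fixed point algebra. By Section~\ref{sec:crossed} we have $A^\alpha \cong e_\alpha(A \rtimes_\alpha G)e_\alpha$, which, since $A \rtimes_\alpha G$ is simple and separable, is a $\sigma$-unital full hereditary subalgebra; so Brown's theorem \cite{B} gives $A^\alpha \otimes \mathbb{K} \cong (A \rtimes_\alpha G) \otimes \mathbb{K}$. On the other hand $A^\alpha$ is a C$^*$-subalgebra of $A$, hence $A^\alpha \otimes \mathbb{K}$ is a C$^*$-subalgebra of $A \otimes \mathbb{K}$, which has no nonzero projections because $A$ is stably projectionless; therefore $(A \rtimes_\alpha G) \otimes \mathbb{K}$ has no nonzero projections, i.e. $A \rtimes_\alpha G$ is stably projectionless. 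The part I expect to require the most care is the first step: verifying that the connecting maps $A_n \otimes C^*(G) \to A_{n+1} \otimes C^*(G)$ are genuine $*$-homomorphisms even though the inclusions $A_n \subset A_{n+1}$ may be degenerate (so that $M(A_n) \not\subset M(A_{n+1})$), and spelling out the closure properties of $\mathcal{C}_{\mathrm{R}}$ used above; everything else is routine.
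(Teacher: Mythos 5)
Your proposal is correct and follows essentially the same route as the paper: simplicity via Kishimoto's theorem, the inductive limit decomposition $A\rtimes_{\alpha}G=\overline{\bigcup_n A_n\rtimes_{\mathrm{Ad}(u^{(n)})}G}$ with $A_n\rtimes_{\mathrm{Ad}(u^{(n)})}G\cong A_n\otimes C^*(G)$ for membership in $\mathcal{C}_{\mathrm{R}}$, and stable projectionlessness by identifying $A^{\alpha}$ with the corner $e_{\alpha}(A\rtimes_{\alpha}G)e_{\alpha}$ and invoking Brown's theorem. You merely spell out (via the Pedersen ideal and the corner/Brown argument) details the paper dismisses as ``easy to see,'' so there is nothing to correct.
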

\begin{proof}
It is easy to see that $A\rtimes_{\alpha}G$ has no unbounded traces and $A^{\alpha}$ is stably projectionless. 
Since $\alpha$ is outer, $A\rtimes_{\alpha}G$ is simple by \cite[Theorem 3.1]{K}. 
Hence we see that $A\rtimes_{\alpha}G$ is a simple stably projectionless C$^*$-algebra with no unbounded traces. 
Let $\{A_n\}_{n\in\mathbb{N}}$ be an increasing sequence of C$^*$-subalgebras of $A$ and $\{u^{(n)}\}_{n\in\mathbb{N}}$ unitary representations as in Definition \ref{def:locally}. 
Then we have 
$$
A\rtimes_{\alpha}G =\overline{\bigcup_{n=1}^\infty A_n\rtimes_{\mathrm{Ad}(u^{(n)})}G}.
$$
Since $A_n\rtimes_{\mathrm{Ad}(u^{(n)})}G$ is isomorphic to the tensor product of $A_n$ and a finite dimensional C$^*$-algebra, we see that $A\rtimes_{\alpha}G$ is contained in $\mathcal{C}_{\mathrm{R}}$. 
\end{proof}

\begin{thm}\label{thm:Main-sym}
Let $\alpha$ and $\beta$ be outer actions of $\mathbb{Z}_2$ on $\mathcal{W}_2$. Assume that $\alpha$ and $\beta$ are locally representable in $\mathcal{C}_{\mathrm{R}}$. 
Then $\alpha$ is conjugate to $\beta$ if and only if $\mathcal{W}_2\rtimes_{\alpha}\mathbb{Z}_2$ is isomorphic to $\mathcal{W}_2\rtimes_{\beta}\mathbb{Z}_2$ and 
$\epsilon (\alpha)= \epsilon (\beta)$ where $\epsilon (\alpha)$ is the invariant defined in Definition \ref{def:trace-invariant}. 
\end{thm}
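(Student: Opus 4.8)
The \emph{only if} direction is immediate: a conjugacy $\theta$ of $(\mathcal{W}_2,\alpha)$ with $(\mathcal{W}_2,\beta)$ induces an isomorphism $\mathcal{W}_2\rtimes_{\alpha}\mathbb{Z}_2\cong\mathcal{W}_2\rtimes_{\beta}\mathbb{Z}_2$, and $\epsilon$ is a conjugacy invariant by Definition \ref{def:trace-invariant}, since the von Neumann algebraic extension $\tilde{\theta}$ of $\theta$ to $\pi_{\tau}(\mathcal{W}_2)^{''}$ carries a unitary $U_{\alpha}$ implementing $\tilde{\alpha}$ to one implementing $\tilde{\beta}$. For the converse, assume $\mathcal{W}_2\rtimes_{\alpha}\mathbb{Z}_2\cong\mathcal{W}_2\rtimes_{\beta}\mathbb{Z}_2$ and $\epsilon(\alpha)=\epsilon(\beta)$; put $B_{\alpha}:=\mathcal{W}_2\rtimes_{\alpha}\mathbb{Z}_2$, $B_{\beta}:=\mathcal{W}_2\rtimes_{\beta}\mathbb{Z}_2$, and let $\gamma$ be the nontrivial character of $\mathbb{Z}_2$. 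By Proposition \ref{pro:loc-app} the actions $\alpha$ and $\beta$ are approximately representable, so by Proposition \ref{thm:dual}(ii) the dual actions $\hat{\alpha}$ on $B_{\alpha}$ and $\hat{\beta}$ on $B_{\beta}$ have the Rohlin property (in particular they are outer), and by Lemma \ref{lem:closed-class} the algebras $B_{\alpha},B_{\beta}$ are simple separable stably projectionless C$^*$-algebras with no unbounded traces lying in $\mathcal{C}_{\mathrm{R}}$, hence they have almost stable rank one, strict comparison, and trivial $K_1$-groups. The plan is to build an isomorphism $\theta\colon B_{\alpha}\to B_{\beta}$ satisfying the two conditions of Corollary \ref{cor:app-classification} and then quote that corollary; note also that $B_{\alpha}\rtimes_{\hat{\alpha}}\mathbb{Z}_2\cong\mathcal{W}_2\otimes M_2(\mathbb{C})$ and $(B_{\alpha})^{\hat{\alpha}}=\mathcal{W}_2$ by the Takesaki--Takai duality, and both have trivial $K$-theory.

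The crux is the claim that $K_0(\hat{\alpha}_{\gamma})=-\mathrm{id}$ on $K_0(B_{\alpha})$, and likewise $K_0(\hat{\beta}_{\gamma})=-\mathrm{id}$. To prove it, apply Proposition \ref{pro:K-con} to the Rohlin action $\hat{\alpha}$ on the simple separable C$^*$-algebra $B_{\alpha}$ and feed it into the $K$-theoretical constraints for the Rohlin property (\cite[Theorem 3.13]{I1}, \cite{I2}): writing $\sigma:=K_0(\hat{\alpha}_{\gamma})$, these constraints identify $K_0((B_{\alpha})^{\hat{\alpha}})$ with the $\sigma$-invariants $K_0(B_{\alpha})^{\mathbb{Z}_2}=\ker(1-\sigma)$, and they force $K_0(B_{\alpha})^{\mathbb{Z}_2}=(1+\sigma)K_0(B_{\alpha})$. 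Since $(B_{\alpha})^{\hat{\alpha}}=\mathcal{W}_2$ has $K_0=0$, we get $\ker(1-\sigma)=0$; as $\sigma^2=\mathrm{id}$ gives $(1-\sigma)(1+\sigma)=0$, injectivity of $1-\sigma$ yields $1+\sigma=0$, i.e. $\sigma=-\mathrm{id}$ (the vanishing of $K_0(B_{\alpha}\rtimes_{\hat{\alpha}}\mathbb{Z}_2)=K_0(\mathcal{W}_2\otimes M_2(\mathbb{C}))$ gives the complementary fact that $1-\sigma$ is also surjective, consistent with this). The same argument applies to $\hat{\beta}$. Consequently, for \emph{any} isomorphism $\psi\colon B_{\alpha}\to B_{\beta}$ we have $K_0(\psi\circ\hat{\alpha}_{\gamma}\circ\psi^{-1})=K_0(\psi)(-\mathrm{id})K_0(\psi)^{-1}=-\mathrm{id}=K_0(\hat{\beta}_{\gamma})$.

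Next, the trace side. Since $\epsilon(\alpha)=\epsilon(\beta)$, Proposition \ref{pro:two extremal} and Remark \ref{rem:two extremal} show that $B_{\alpha}$ and $B_{\beta}$ have the same number of extremal tracial states---one if the common value of $\epsilon$ is $1$, two otherwise---and in the two-trace case the values $\bar{\tau}(e_{\alpha})$ as $\tau$ runs over the extremal traces of $B_{\alpha}$ form the set $\{\tfrac{1+\epsilon(\alpha)}{2},\tfrac{1-\epsilon(\alpha)}{2}\}$, equal to the corresponding set for $e_{\beta}$. In the two-trace case $\hat{\alpha}_{\gamma}$ (resp. $\hat{\beta}_{\gamma}$) exchanges the two extremal traces, for otherwise $B_{\alpha}$ would carry two distinct $\hat{\alpha}$-invariant traces and $B_{\alpha}\rtimes_{\hat{\alpha}}\mathbb{Z}_2\cong\mathcal{W}_2\otimes M_2(\mathbb{C})$ would have more than one tracial state. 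Since the affine self-homeomorphism of an interval that exchanges the two endpoints is unique (and automatically of order two), it follows that $T(\psi\circ\hat{\alpha}_{\gamma}\circ\psi^{-1})=T(\hat{\beta}_{\gamma})$ on $T_1(B_{\beta})$, hence on $T(B_{\beta})$, for every isomorphism $\psi\colon B_{\alpha}\to B_{\beta}$ (this being trivial in the one-trace case). Combining this with the previous paragraph and Robert's classification theorem (Theorem \ref{thm:Robert}(i)), $\psi\circ\hat{\alpha}_{\gamma}\circ\psi^{-1}$ is approximately unitarily equivalent to $\hat{\beta}_{\gamma}$. As $g\mapsto\psi\circ\hat{\alpha}_{g}\circ\psi^{-1}$ and $\hat{\beta}$ are Rohlin actions of $\mathbb{Z}_2$ on $B_{\beta}$, and $B_{\beta}$ has almost stable rank one, Theorem \ref{thm:Rohlin} provides an approximately inner automorphism $\rho$ of $B_{\beta}$ with $\rho\circ\psi\circ\hat{\alpha}_{g}\circ\psi^{-1}\circ\rho^{-1}=\hat{\beta}_{g}$ for all $g$; set $\theta:=\rho\circ\psi$, so that $\theta\circ\hat{\alpha}_{g}\circ\theta^{-1}=\hat{\beta}_{g}$.

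It remains to arrange that $\bar{\theta}(e_{\alpha})$ is Murray--von Neumann equivalent to $e_{\beta}$. By Proposition \ref{pro:comparison multiplier} this holds iff $\bar{\tau}(\bar{\theta}(e_{\alpha}))=\bar{\tau}(e_{\beta})$ for all $\tau\in T(B_{\beta})$, and since $\rho$ is approximately inner one has $\tau\circ\theta=\tau\circ\psi$, so the left-hand side equals $\overline{\tau\circ\psi}(e_{\alpha})$. In the one-trace case both sides are $\tfrac12$. In the two-trace case, using Remark \ref{rem:two extremal}(ii) and $\overline{\hat{\beta}_{\gamma}}(e_{\beta})=1-e_{\beta}$, both $\overline{\tau\circ\psi}(e_{\alpha})$ and $\bar{\tau}(e_{\beta})$ range over $\{\tfrac{1+\epsilon(\alpha)}{2},\tfrac{1-\epsilon(\alpha)}{2}\}$ as $\tau$ runs over the two extremal traces of $B_{\beta}$; if the pairing of values already matches we are done, and otherwise we replace $\theta$ by $\hat{\beta}_{\gamma}\circ\theta$, which exchanges the two extremal traces of $B_{\beta}$ and still satisfies $(\hat{\beta}_{\gamma}\circ\theta)\circ\hat{\alpha}_{\gamma}\circ(\hat{\beta}_{\gamma}\circ\theta)^{-1}=\hat{\beta}_{\gamma}$. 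In either case $\theta$ meets both hypotheses of Corollary \ref{cor:app-classification}, so $\alpha$ is conjugate to $\beta$. \textbf{The main obstacle} is the identification $K_0(\hat{\alpha}_{\gamma})=-\mathrm{id}$: the rest is bookkeeping with Robert's theorem, the Rohlin rigidity Theorem \ref{thm:Rohlin}, and the invariant $\epsilon$, but this step genuinely requires pushing the precise $K$-theoretical constraints for the Rohlin property (through Proposition \ref{pro:K-con}) into the $K$-theory of the fixed-point algebra and the double crossed product, where the triviality of $K_*(\mathcal{W}_2)$ is used.
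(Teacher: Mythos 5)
Your overall architecture is the same as the paper's: pass to the dual actions, which are Rohlin by Propositions \ref{pro:loc-app} and \ref{thm:dual}; show $K_0(\hat{\alpha})=K_0(\hat{\beta})=-\mathrm{id}$; use $\epsilon(\alpha)=\epsilon(\beta)$, Proposition \ref{pro:two extremal} and Remark \ref{rem:two extremal} to match the traces of $e_{\alpha}$ and $e_{\beta}$ (with the swap $\theta\mapsto\hat{\beta}_{\gamma}\circ\theta$, which the paper performs as $\theta\mapsto\theta\circ\hat{\alpha}$); then conclude via Robert's theorem, Proposition \ref{pro:comparison multiplier} and Corollary \ref{cor:app-classification}. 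That bookkeeping is correct. The gap is precisely in the step you flag as the main obstacle, the proof that $K_0(\hat{\alpha}_{\gamma})=-\mathrm{id}$. Izumi's $K$-theoretical constraints (\cite[Theorem 3.13]{I1}, \cite[Theorem 3.3]{I2}) are theorems about \emph{unital} algebras, and Proposition \ref{pro:K-con} transfers only the $G$-module $(K_i(B_{\alpha}),\hat{\alpha})$ to a unital Rohlin system; it carries no information about the fixed point algebra. So the only constraint you legitimately obtain this way is the module-theoretic one (cohomological triviality, in particular $\ker(1-\sigma)=\mathrm{im}(1+\sigma)$), and that alone cannot force $\sigma=-\mathrm{id}$: on a uniquely $2$-divisible group such as $\mathbb{Z}[\frac{1}{2}]$ --- which genuinely occurs as $K_0(\mathcal{W}_2\rtimes_{\alpha}\mathbb{Z}_2)$, see Example \ref{ex:dyadic-rational} --- both $\sigma=\mathrm{id}$ and $\sigma=-\mathrm{id}$ are cohomologically trivial. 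Your key input, the identification of $K_0((B_{\alpha})^{\hat{\alpha}})=K_0(\mathcal{W}_2)=0$ with $\ker(1-\sigma)$, is a statement about the non-unital Rohlin action $\hat{\alpha}$ that neither cited result provides; it would itself require an argument (e.g.\ redoing Proposition \ref{pro:K-con} while keeping track of fixed point algebras, using $(A\otimes\mathcal{O}_{\infty})^{\alpha\otimes\mathrm{id}}=A^{\alpha}\otimes\mathcal{O}_{\infty}$, fullness of $p$, Brown's theorem and naturality of the inclusions).

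The paper avoids this machinery entirely. By the Takesaki--Takai duality (Section \ref{sec:crossed}), the composition of the inclusions $B_{\alpha}\hookrightarrow B_{\alpha}\rtimes_{\hat{\alpha}}\mathbb{Z}_2\hookrightarrow B_{\alpha}\rtimes_{\hat{\alpha}}\mathbb{Z}_2\rtimes_{\hat{\hat{\alpha}}}\mathbb{Z}_2\cong B_{\alpha}\otimes M_2(\mathbb{C})$ is $a\mapsto\mathrm{diag}(a,\hat{\alpha}(a))$; since the middle algebra is $\mathcal{W}_2\otimes M_2(\mathbb{C})$, whose $K_0$ vanishes, this composition is zero on $K_0$, whence $\mathrm{id}+K_0(\hat{\alpha})=0$, i.e.\ $K_0(\hat{\alpha})=-\mathrm{id}$. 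Substituting this short duality computation for your appeal to the Rohlin $K$-theory constraints makes your proof complete; it also makes the explicit detour through Theorem \ref{thm:Rohlin} before quoting Corollary \ref{cor:app-classification} unnecessary, since that corollary only requires $\theta\circ\hat{\alpha}_{\gamma}\circ\theta^{-1}$ to be approximately unitarily equivalent to $\hat{\beta}_{\gamma}$ together with the Murray--von Neumann equivalence of $\bar{\theta}(e_{\alpha})$ and $e_{\beta}$.
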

\begin{proof}
It is easy to see that if $\alpha$ is conjugate to $\beta$, then $\mathcal{W}_2\rtimes_{\alpha}\mathbb{Z}_2$ is isomorphic to $\mathcal{W}_2\rtimes_{\beta}\mathbb{Z}_2$ and 
$\epsilon (\alpha)= \epsilon (\beta)$. We will show the converse. Let $\theta$ be an isomorphism from $\mathcal{W}_2\rtimes_{\alpha}\mathbb{Z}_2$ onto $\mathcal{W}_2\rtimes_{\beta}\mathbb{Z}_2$. 

First, we shall show $K_0(\hat{\alpha})=-\mathrm{id}$. Put $A:= \mathcal{W}_2\rtimes_{\alpha}\mathbb{Z}_2$, and 
let $i_1$ and $i_2$ denote the inclusion from $A$ into $A\rtimes_{\hat{\alpha}}\mathbb{Z}_2$ and the inclusion from $A\rtimes_{\hat{\alpha}}\mathbb{Z}_2$ 
into $A\rtimes_{\hat{\alpha}}\mathbb{Z}_2\rtimes_{\hat{\hat{\alpha}}}\mathbb{Z}_2\cong A\otimes M_2(\mathbb{C})$, respectively. 
Since $A\rtimes_{\hat{\alpha}}\mathbb{Z}_2$ is isomorphic to $\mathcal{W}_2\otimes M_{2}(\mathbb{C})$ and $K_0(\mathcal{W}_2)=0$, we have $K_0 (i_2\circ i_1)=0$. 
The Takesaki-Takai duality implies 
$$
i_2\circ i_1(a)=
\left(\begin{array}{cc}
            a    &     0         \\ 
            0    &  \hat{\alpha} (a)   
 \end{array} \right) 
$$
for any $a\in A$ (see Section \ref{sec:crossed}). Hence we have $K_0(\hat{\alpha})=-\mathrm{id}$. Note that the same argument shows  $K_0(\hat{\beta})=-\mathrm{id}$.

(i) The case where $\mathcal{W}_2\rtimes_{\alpha}\mathbb{Z}_2$ and $\mathcal{W}_2\rtimes_{\beta}\mathbb{Z}_2$ have a unique tracial state. \\
We denote by $\omega_{\alpha}$ and $\omega_{\beta}$ the unique tracial state on $\mathcal{W}_2\rtimes_{\alpha}\mathbb{Z}_2$ and 
the unique tracial state on $\mathcal{W}_2\rtimes_{\beta}\mathbb{Z}_2$, respectively.  
We see that $\hat{\beta}$ is approximately unitarily equivalent to $\theta\circ \hat{\alpha} \circ \theta^{-1}$ by Lemma \ref{lem:closed-class} and 
Robert's classification theorem (Theorem \ref{thm:Robert}). 
Moreover we have 
$$
\bar{\omega}_{\alpha}(e_{\alpha}) = \bar{\omega}_{\beta} (e_{\beta}) =\frac{1}{2},
$$
and hence $\theta (e_{\alpha})$ is Murray-von Neumann equivalent to $e_{\beta}$ by Proposition \ref{pro:comparison multiplier}. 
Therefore Corollary \ref{cor:app-classification} shows that $\alpha$ is conjugate to $\beta$. 

(ii) The case where $\mathcal{W}_2\rtimes_{\alpha}\mathbb{Z}_2$ and $\mathcal{W}_2\rtimes_{\beta}\mathbb{Z}_2$ have two extremal tracial states. \\
Let $\omega_{\alpha ,0}$ and $\omega_{\alpha ,1}$ be extremal tracial states on $\mathcal{W}_2\rtimes_{\alpha}\mathbb{Z}_2$ 
and $\omega_{\beta ,0}$ and $\omega_{\beta ,1}$ extremal tracial states on  $\mathcal{W}_2\rtimes_{\beta}\mathbb{Z}_2$. 
Since $\mathcal{W}_2\rtimes_{\alpha}\mathbb{Z}_2\rtimes_{\hat{\alpha}}\mathbb{Z}_2$ and $\mathcal{W}_2\rtimes_{\beta}\mathbb{Z}_2\rtimes_{\hat{\beta}}\mathbb{Z}_2$ 
have a unique tracial state, we have $\omega_{\alpha ,0}\circ \hat{\alpha} =\omega_{\alpha ,1}$ and $\omega_{\beta ,0}\circ \hat{\beta} =\omega_{\beta ,1}$. 
Hence we see that $\hat{\beta}$ is approximately unitarily equivalent to $\theta\circ \hat{\alpha} \circ \theta^{-1}$ by Lemma \ref{lem:closed-class} and 
Robert's classification theorem (Theorem \ref{thm:Robert}). 
Choose a unitary element $U_{\alpha}$ in $\pi_{\tau}(\mathcal{W}_2)^{''}$ such that $\tilde{\alpha}=\mathrm{Ad} (U_{\alpha})$ and $U_{\alpha}^2=1$ 
(see Proposition \ref{pro:two extremal} and Remark \ref{rem:two extremal}). 
By Remark \ref{rem:two extremal}, we have either
$$
(\bar{\omega}_{\alpha ,0} (e_{\alpha}), \bar{\omega}_{\alpha ,1} (e_{\alpha}))=(\frac{1}{2}(1+\tilde{\tau}(U_{\alpha})), \frac{1}{2}(1-\tilde{\tau}(U_{\alpha})))
$$ 
or 
$$
(\bar{\omega}_{\alpha ,0} (e_{\alpha}), \bar{\omega}_{\alpha ,1} (e_{\alpha}))=(\frac{1}{2}(1-\tilde{\tau}(U_{\alpha})), \frac{1}{2}(1+\tilde{\tau}(U_{\alpha}))).
$$
In the same way, we can compute $\bar{\omega}_{\beta ,0}(e_{\beta})$ and $\bar{\omega}_{\beta ,1}(e_{\beta})$. 
Since $\epsilon (\alpha)= \epsilon (\beta)$, we have either 
$$
\bar{\omega}_{\alpha ,0} (e_{\alpha})=\bar{\omega}_{\beta ,0}(e_{\beta}) \quad \mathrm{and} \quad \bar{\omega}_{\alpha ,1} (e_{\alpha})=\bar{\omega}_{\beta ,1}(e_{\beta})
$$ 
or 
$$
\bar{\omega}_{\alpha ,0} (e_{\alpha})=\bar{\omega}_{\beta ,1}(e_{\beta}) \quad \mathrm{and} \quad  \bar{\omega}_{\alpha ,1} (e_{\alpha})=\bar{\omega}_{\beta ,0}(e_{\beta}). 
$$
Hence replacing $\theta$ with $\theta \circ \hat{\alpha}$ if necessary, we have 
$\bar{\omega} (\theta (e_{\alpha}))= \bar{\omega} (e_{\beta})$ for any $\omega\in T_1(\mathcal{W}_2\rtimes_{\beta}\mathbb{Z}_2)$. 
Therefore $\alpha$ is conjugate to $\beta$ by Proposition \ref{pro:comparison multiplier} and  Corollary \ref{cor:app-classification}. 
\end{proof}
\begin{rem}
(i) With notation as above, $\mathcal{W}_2\rtimes_{\alpha}\mathbb{Z}_2$ is isomorphic to $\mathcal{W}_2\rtimes_{\beta}\mathbb{Z}_2$ if and only if 
$T(\mathcal{W}_2\rtimes_{\alpha}\mathbb{Z}_2)\cong T(\mathcal{W}_2\rtimes_{\beta}\mathbb{Z}_2)$ as topological cones and 
$K_0(\mathcal{W}_2\rtimes_{\alpha}\mathbb{Z}_2)\cong K_0(\mathcal{W}_2\rtimes_{\beta}\mathbb{Z}_2)$ as abelian groups, and that these isomorphisms are compatible with the pairing 
between $T$ and $K_0$ by Lemma \ref{lem:closed-class} and Robert's classification theorem. \\ 
(ii) If $\mathcal{W}_2\rtimes_{\alpha}\mathbb{Z}_2$ and $\mathcal{W}_2\rtimes_{\beta}\mathbb{Z}_2$ have a unique tracial state, 
then $\alpha$ is conjugate to $\beta$ if and only if $\mathcal{W}_2\rtimes_{\alpha}\mathbb{Z}_2$ is isomorphic to $\mathcal{W}_2\rtimes_{\beta}\mathbb{Z}_2$. 
\end{rem}

We shall consider the classification up to cocycle conjugacy. 

\begin{pro}\label{pro:cocycle-dual}
Let $A$ be a simple exact $\sigma$-unital stably projectionless C$^*$-algebra with a unique tracial state and no unbounded traces, and let 
$\alpha$ and $\beta$ be actions of a finite abelian group $G$ on $A$. 
Assume that $A$ has strict comparison and almost stable rank one. 
Then $\alpha$ is cocycle conjugate to $\beta$ if and only if $\hat{\alpha}$ is conjugate to $\hat{\beta}$. 
\end{pro}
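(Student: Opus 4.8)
The plan is to treat the two implications separately: the forward implication ($\alpha$ cocycle conjugate to $\beta$ $\Rightarrow$ $\hat{\alpha}$ conjugate to $\hat{\beta}$) is formal and uses none of the hypotheses on $A$, while the reverse implication is where the work lies and proceeds by a destabilization argument built on the Takesaki-Takai duality.

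For the forward implication, suppose $\beta=\psi\circ\alpha^{u}\circ\psi^{-1}$ for an automorphism $\psi$ of $A$ and an $\alpha$-cocycle $u$. I would first observe that $a\mapsto a$ on $A$ together with $\lambda^{\alpha^{u}}_{g}\mapsto u(g)\lambda^{\alpha}_{g}$ extends to an isomorphism $A\rtimes_{\alpha^{u}}G\to A\rtimes_{\alpha}G$: multiplicativity on the implementing unitaries is exactly the cocycle identity, and since $\hat{\alpha}_{\gamma}$ acts trivially on $M(A)$ (which contains $u$) and scales $\lambda^{\alpha}_{g}$ by $\gamma(g)$, this isomorphism intertwines $\widehat{\alpha^{u}}$ with $\hat{\alpha}$. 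Composing with the isomorphism $A\rtimes_{\alpha^{u}}G\to A\rtimes_{\beta}G$ induced by $\psi$, which intertwines $\widehat{\alpha^{u}}$ with $\hat{\beta}$, produces a conjugacy between $\hat{\alpha}$ and $\hat{\beta}$.

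For the reverse implication, let $\theta\colon A\rtimes_{\alpha}G\to A\rtimes_{\beta}G$ be an isomorphism with $\theta\circ\hat{\alpha}_{\gamma}=\hat{\beta}_{\gamma}\circ\theta$ for all $\gamma\in\hat{G}$, and put $B:=A\otimes K(\ell^{2}(G))$. Extending $\theta$ to the second crossed products and conjugating by the Takesaki-Takai isomorphisms gives an automorphism $\Theta$ of $B$ with $\Theta\circ(\alpha\otimes\mathrm{Ad}(\rho))=(\beta\otimes\mathrm{Ad}(\rho))\circ\Theta$ and $\bar{\Theta}(1\otimes e)=\bar{\theta}(e_{\alpha})$, in the notation of Section \ref{sec:crossed}. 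As used there, $1\otimes e$ is $(\beta\otimes\mathrm{Ad}(\rho))$-invariant, the corner $(1\otimes e)B(1\otimes e)$ with the restricted action is conjugate to $(A,\beta)$, and applying $\Theta$ to the analogous identification for $\alpha$ shows that $(A,\alpha)$ is conjugate to the restriction of $\beta\otimes\mathrm{Ad}(\rho)$ to the $(\beta\otimes\mathrm{Ad}(\rho))$-invariant corner $\bar{\Theta}(1\otimes e)B\bar{\Theta}(1\otimes e)$. Because $A$ has a unique tracial state and no unbounded traces, $B$ admits no trace-scaling automorphisms, so $\bar{\tau}(\bar{\Theta}(1\otimes e))=\bar{\tau}(1\otimes e)<\infty$ for the unique trace $\tau$; since $B$ is again simple, exact, $\sigma$-unital and stably projectionless with strict comparison and almost stable rank one, Proposition \ref{pro:comparison multiplier} produces a partial isometry $w\in M(B)$ with $w^{*}w=\bar{\Theta}(1\otimes e)$ and $ww^{*}=1\otimes e$.

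The heart of the argument is that such a $w$ upgrades the isomorphism of corners to a cocycle conjugacy, which is precisely why no Murray-von Neumann condition on $\theta(e_{\alpha})$ as in Corollary \ref{cor:app-classification} is needed here. Writing $\sigma:=\beta\otimes\mathrm{Ad}(\rho)$, $p:=\bar{\Theta}(1\otimes e)$ and $q:=1\otimes e$, one checks (using $\bar{\sigma}_{g}(p)=p$ and $\bar{\sigma}_{g}(q)=q$) that $v(g):=w\,\bar{\sigma}_{g}(w^{*})$ is a unitary in the corner $qM(B)q$, that $v$ is a cocycle for the restriction of $\sigma$ to $qBq$ by a short computation with the cocycle identity, and that $\mathrm{Ad}(w)$ conjugates $\sigma|_{pBp}$ to $(\sigma|_{qBq})^{v}$. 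Hence $\sigma|_{pBp}$ is cocycle conjugate to $\sigma|_{qBq}$; combined with the two conjugacies from the previous paragraph and the routine fact that cocycle conjugacy is an equivalence relation stable under composition with conjugacies, this shows that $\alpha$ is cocycle conjugate to $\beta$. I expect the only real subtlety to be bookkeeping: tracking how the Takesaki-Takai isomorphism carries $e_{\alpha}$ to $1\otimes e$ while preserving invariance under the bidual action and the trace value, all of which is already in place in Section \ref{sec:crossed}.
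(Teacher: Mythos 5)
Your proposal is correct and follows essentially the same route as the paper: the forward direction is the formal cocycle-to-dual argument, and the reverse direction uses Takesaki--Takai duality to get an intertwining automorphism $\Theta$ of $A\otimes K(\ell^2(G))$, the uniqueness of the trace plus Proposition \ref{pro:comparison multiplier} to produce a partial isometry $w$ between the invariant corners, and the cocycle $v(g)=w\,\overline{\beta_g\otimes\mathrm{Ad}\rho_g}(w^*)$ to upgrade the corner identification to a cocycle conjugacy, exactly as in the paper's proof.
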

\begin{proof}
In general, it is easy to see that if $\alpha$ is cocycle conjugate to $\beta$, then $\hat{\alpha}$ is conjugate to $\hat{\beta}$. 
We will show the if part. Since we see that $\hat{\hat{\alpha}}$ is conjugate to $\hat{\hat{\beta}}$, there exists an automorphism $\theta$ of $A\otimes K(\ell^2(G))$ 
such that $\theta$ intertwines $\alpha\otimes \mathrm{Ad}(\rho )$ and $\beta\otimes \mathrm{Ad}(\rho )$ by the Takesaki-Takai duality theorem. 
Note that $A\otimes K(\ell^2(G))$ is a simple exact $\sigma$-unital stably projectionless C$^*$-algebra with a unique tracial state $\omega$ and no unbounded traces, which has 
strict comparison and almost stable rank one. Let $e\in K(\ell^2(G))$ be the projection onto the constant functions. 
Then we have 
$$
(A,\alpha) \cong (\bar{\theta}(1\otimes e)(A\otimes K(\ell^2(G))) \bar{\theta}(1\otimes e),  \beta\otimes \mathrm{Ad}(\rho )|_{\bar{\theta}(1\otimes e)(A\otimes K(\ell^2(G))) \bar{\theta}(1\otimes e)} )
$$
by the proof of Corollary \ref{cor:app-classification}. 
Since $\omega$ is the unique tracial state on $A\otimes K(\ell^2(G))$, we have $\bar{\omega}(\bar{\theta} (1\otimes e)) = \bar{\omega}(1\otimes e)$. 
By Proposition \ref{pro:comparison multiplier}, there exists a partial isometry $v$ in $M(A\otimes K(\ell^2(G)))$ such that 
$$
v^*v=\bar{\theta} (1\otimes e)\quad \mathrm{and}\quad vv^*=1\otimes e.
$$ For any $g\in G$, define 
$$
u (g):=v\overline{\beta_g\otimes\mathrm{Ad}\rho_g}(v^*).
$$
Then we have $(1\otimes e)u(g) (1\otimes e)=u (g)$ for any $g\in G$, and we may regard $u$ as a $\beta$-cocycle in $M(A)$. 
It can be easily checked that 
$$
(\bar{\theta}(1\otimes e)(A\otimes K(\ell^2(G))) \bar{\theta}(1\otimes e),  \beta\otimes \mathrm{Ad}(\rho )|_{\bar{\theta}(1\otimes e)(A\otimes K(\ell^2(G))) \bar{\theta}(1\otimes e)} ) 
\cong (A, \beta^{u}),
$$
and hence we obtain the conclusion. 
\end{proof}

\begin{thm}\label{thm:cocycle-conjugacy}
Let $\alpha$ and $\beta$ be outer actions of $\mathbb{Z}_2$ on $\mathcal{W}_2$. Assume that $\alpha$ and $\beta$ are locally representable in $\mathcal{C}_{\mathrm{R}}$. 
Then $\alpha$ is cocycle conjugate to $\beta$ if and only if $\mathcal{W}_2\rtimes_{\alpha}\mathbb{Z}_2$ is isomorphic to $\mathcal{W}_2\rtimes_{\beta}\mathbb{Z}_2$. 
\end{thm}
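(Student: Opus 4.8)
The plan is to deduce the non-trivial (``if'') direction from the dual picture of Proposition~\ref{pro:cocycle-dual}. The ``only if'' direction is standard, since the isomorphism class of $\mathcal{W}_2\rtimes_{\alpha}\mathbb{Z}_2$ depends only on the cocycle conjugacy class of $\alpha$. For the converse, assume $\Theta$ is an isomorphism from $B_{\alpha}:=\mathcal{W}_2\rtimes_{\alpha}\mathbb{Z}_2$ onto $B_{\beta}:=\mathcal{W}_2\rtimes_{\beta}\mathbb{Z}_2$. Since $\mathcal{W}_2$ is a simple exact separable stably projectionless C$^*$-algebra with a unique tracial state, no unbounded traces, strict comparison and almost stable rank one, Proposition~\ref{pro:cocycle-dual} reduces the task to proving that the dual action $\hat{\alpha}$ on $B_{\alpha}$ is conjugate to $\hat{\beta}$ on $B_{\beta}$.

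The point that makes this feasible is that $\hat{\alpha}$ and $\hat{\beta}$ have the Rohlin property: $\alpha$ and $\beta$ are locally representable, hence approximately representable by Proposition~\ref{pro:loc-app}, and then Proposition~\ref{thm:dual}(ii) gives the Rohlin property of $\hat{\alpha}$ and $\hat{\beta}$. By Lemma~\ref{lem:closed-class}, $B_{\alpha}$ and $B_{\beta}$ are simple separable stably projectionless C$^*$-algebras in $\mathcal{C}_{\mathrm{R}}$ with no unbounded traces; in particular they are $\mathcal{Z}$-stable and therefore have almost stable rank one. So I would apply the classification of Rohlin actions (Theorem~\ref{thm:Rohlin}): it remains to show that $\Theta\circ\hat{\alpha}_{\gamma}\circ\Theta^{-1}$ is approximately unitarily equivalent to $\hat{\beta}_{\gamma}$, where $\gamma$ is the non-trivial character of $\mathbb{Z}_2$. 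Granting this, Theorem~\ref{thm:Rohlin} yields an approximately inner automorphism $\theta$ of $B_{\beta}$ with $\theta\circ\Theta\circ\hat{\alpha}_{\gamma}\circ\Theta^{-1}\circ\theta^{-1}=\hat{\beta}_{\gamma}$, so $\hat{\alpha}$ and $\hat{\beta}$ are conjugate, and Proposition~\ref{pro:cocycle-dual} finishes the proof.

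To establish the approximate unitary equivalence I would use Robert's classification of automorphisms (Theorem~\ref{thm:Robert}(i)) applied to $B_{\beta}\in\mathcal{C}_{\mathrm{R}}$, so that it suffices to match the induced maps on $K_0$ and on $T$. For $K_0$: the Takesaki--Takai duality identifies $B_{\alpha}\rtimes_{\hat{\alpha}}\mathbb{Z}_2$ with $\mathcal{W}_2\otimes M_2(\mathbb{C})$, which has trivial $K_0$, and under the further identification $B_{\alpha}\rtimes_{\hat{\alpha}}\mathbb{Z}_2\rtimes_{\hat{\hat{\alpha}}}\mathbb{Z}_2\cong B_{\alpha}\otimes M_2(\mathbb{C})$ the inclusion of $B_{\alpha}$ becomes $a\mapsto\mathrm{diag}(a,\hat{\alpha}_{\gamma}(a))$; since this inclusion factors through $B_{\alpha}\rtimes_{\hat{\alpha}}\mathbb{Z}_2$ it vanishes on $K_0$, forcing $K_0(\hat{\alpha}_{\gamma})=-\mathrm{id}$ on $K_0(B_{\alpha})$, and similarly $K_0(\hat{\beta}_{\gamma})=-\mathrm{id}$, so the two maps agree under $\Theta$. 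For $T$: because $B_{\alpha}\rtimes_{\hat{\alpha}}\mathbb{Z}_2\cong\mathcal{W}_2\otimes M_2(\mathbb{C})$ has a unique tracial state, $B_{\alpha}$ has a unique $\hat{\alpha}$-invariant tracial state; hence if $B_{\alpha}$ has a unique tracial state then $T(\hat{\alpha}_{\gamma})=\mathrm{id}$, while if $B_{\alpha}$ has two extremal tracial states then $\hat{\alpha}_{\gamma}$ must interchange them (their average being the invariant one), so $T(\hat{\alpha}_{\gamma})$ is the unique non-trivial affine involution of the trace segment. The same analysis applies to $\hat{\beta}_{\gamma}$, and in either case $\Theta$ matches the two maps on $T$, so Theorem~\ref{thm:Robert}(i) gives the required approximate unitary equivalence.

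The routine parts (the duality computations and the fact that the Rohlin property passes to conjugates) are unproblematic; I expect the main nuisance to be the trace bookkeeping in the two-extremal-traces case --- pinning down that $\hat{\alpha}_{\gamma}$ swaps the two extremal tracial states so that its action on the trace cone is canonical and hence automatically matched by $\Theta$ --- together with the care needed to keep everything phrased as automorphisms of the single algebra $B_{\beta}$. The genuine conceptual input, though, is the simple observation that local representability of $\alpha$ and $\beta$ forces the Rohlin property on the dual actions, which is exactly what brings Theorem~\ref{thm:Rohlin} into play.
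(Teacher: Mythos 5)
Your proposal is correct and follows essentially the same route as the paper: approximate unitary equivalence of the dual actions via Robert's classification (the $K_0(\hat{\alpha})=-\mathrm{id}$ computation from Takesaki--Takai duality and the trace analysis, which the paper cites as ``the proof of Theorem \ref{thm:Main-sym}''), then conjugacy of $\hat{\alpha}$ and $\hat{\beta}$ from Theorem \ref{thm:Rohlin} together with Proposition \ref{thm:dual} (and Proposition \ref{pro:loc-app}), and finally Proposition \ref{pro:cocycle-dual} to pass back to cocycle conjugacy of $\alpha$ and $\beta$. The only difference is that you spell out the cited sub-argument explicitly, which is fine.
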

\begin{proof}
The only if part is obvious. We will show the if part. Let $\theta$ be an isomorphism from $\mathcal{W}_2\rtimes_{\alpha}\mathbb{Z}_2$ onto $\mathcal{W}_2\rtimes_{\beta}\mathbb{Z}_2$. 
The proof of Theorem \ref{thm:Main-sym} implies that $\hat{\beta}$ is approximately unitarily equivalent to $\theta\circ \hat{\alpha} \circ \theta^{-1}$. 
Hence $\hat{\alpha}$ is conjugate to $\hat{\beta}$ by Theorem \ref{thm:Rohlin} and Proposition \ref{thm:dual}. 
Proposition \ref{pro:cocycle-dual} shows that $\alpha$ is cocycle conjugate to $\beta$. 
\end{proof}

The following example is based on Blackadar's construction of symmetries on $M_{2^{\infty}}$ in \cite{Bla2} and Robert's classification theorem. 

\begin{ex}\label{ex:dyadic-rational}
\cite[Theorem 5.2.1 and Theorem 5.2.2]{Ell} show that there exists a simple stably projectionless C$^*$-algebra $A$ such that $A$ is expressible as an inductive limit C$^*$-algebra of 
1-dimensional NCCW complexes with trivial $K_1$-groups and $(K_0(A),(T(A),T_1(A)),r_A)= (\mathbb{Z},(\mathbb{R}_{+},\emptyset),0)$. 
By \cite[Proposition 5.2]{Na2}, there exists a hereditary subalgebra $B$ of $A$ such that $B$ has a unique tracial state $\omega$ and no unbounded traces. 
Since $B$ is stably isomorphic to $A$ by Brown's theorem \cite{B}, $B$ is contained in $\mathcal{C}_{\mathrm{R}}$ and $(K_0(B),(T(B),T_1(B)),r_B)= (\mathbb{Z},(\mathbb{R}_{+},\{\omega\}),0)$. 

By Robert's classification theorem (Theorem \ref{thm:Robert}), there exists an automorphism $\beta$ of $B$ such that $K_0(\beta)=-\mathrm{id}$. 
Define a homomorphism $\Phi_i$ from $M_{2^i}(\mathbb{C})\otimes B$ to $M_{2^{i+1}}(\mathbb{C})\otimes B$ by 
$$
\Phi_i (x) =  \left(\begin{array}{cc}
            x    &      0          \\ 
            0    &     (\mathrm{id}_{M_{2^{i}}(\mathbb{C})}\otimes\beta ) (x)   
 \end{array} \right) ,
$$
and put $D:=\displaystyle{\lim_{\rightarrow}}(M_{2^i}(\mathbb{C} )\otimes B, \Phi_i)$. 
Then we have $K_0(\Phi_i )= 0$, and hence $K_0(D)=0$. 
It can be easily checked that $D$ is a simple stably projectionless C$^*$-algebra with a unique tracial state and no unbounded traces. 
Therefore $D$ is isomorphic to $\mathcal{W}_2$ by Robert's classification theorem. 

We shall construct a locally representable $\mathbb{Z}_2$-action $(\mathcal{W}_2, \alpha )$ in $\mathcal{C}_{\mathrm{R}}$. 
It is easy to see that $\Phi_i$ is unitarily equivalent to a homomorphism $\Psi_i$ such that 
$$
\Psi_i ( 
            \left(\begin{array}{cc}
            x_{11}    &     x_{12}         \\ 
            x_{21}    &     x_{22}   
 \end{array} \right) )
= 
 \left(\begin{array}{cccc}
                  x_{11}    &             0                &  x_{12}   &             0                     \\ 
                  0            &     (\mathrm{id}_{M_{2^{i-1}}(\mathbb{C})}\otimes\beta )  (x_{22}) &        0           &  (\mathrm{id}_{M_{2^{i-1}}(\mathbb{C})}\otimes\beta ) (x_{21})       \\
                  x_{21}    &             0                &  x_{22}   &             0                     \\
                  0            & (\mathrm{id}_{M_{2^{i-1}}(\mathbb{C})}\otimes\beta )  (x_{12}) &        0           &  (\mathrm{id}_{M_{2^{i-1}}(\mathbb{C})}\otimes\beta ) (x_{11})
 \end{array} \right) 
$$
for any $ \left(\begin{array}{cc}
            x_{11}    &     x_{12}         \\ 
            x_{21}    &     x_{22}   
 \end{array} \right) \in M_{2^i}(\mathbb{C})\otimes B$. 
Since $\displaystyle{\lim_{\rightarrow}}(M_{2^i}(\mathbb{C} )\otimes B, \Psi_i)$ is isomorphic to $D$, 
$\displaystyle{\lim_{\rightarrow}}(M_{2^i}(\mathbb{C} )\otimes B, \Psi_i)$ is isomorphic to $\mathcal{W}_2$. 
For any $i\in\mathbb{N}$, define an automorphism $\alpha_i$ of $M_{2^i}(\mathbb{C})\otimes B$ by 
$$
\alpha_i ( \left(\begin{array}{cc}
            x_{11}    &     x_{12}         \\ 
            x_{21}    &     x_{22}   
 \end{array} \right) ) 
:= 
\mathrm{Ad} ( \left(\begin{array}{cc}
            1    &      0         \\ 
            0    &     -1   
 \end{array} \right) ) 
( \left(\begin{array}{cc}
            x_{11}    &     x_{12}         \\ 
            x_{21}    &     x_{22}   
 \end{array} \right)
)
$$
for any $ \left(\begin{array}{cc}
            x_{11}    &     x_{12}         \\ 
            x_{21}    &     x_{22}   
 \end{array} \right) \in M_{2^i}(\mathbb{C})\otimes B$. 
Then we have $\Psi_i \circ \alpha_i = \alpha_{i+1} \circ \Psi_i$, and hence there exists an automorphism $\alpha$ of $\mathcal{W}_2$ such that 
$\alpha (a) = \alpha_{i} (a)$ for any $a\in M_{2^i}(\mathbb{C})\otimes B$ and $i\in\mathbb{N}$.  
It is obvious that $\alpha$ is a locally representable $\mathbb{Z}_2$-action in $\mathcal{C}_{\mathrm{R}}$. 

We shall consider the fixed point algebra $\mathcal{W}_2^{\alpha}$. It can be easily checked that $\mathcal{W}_2^{\alpha}$ is isomorphic to an inductive limit C$^*$-algebra 
$\displaystyle{\lim_{\rightarrow}}(M_{2^{i-1}}(\mathbb{C})\otimes B\oplus M_{2^{i-1}}(\mathbb{C})\otimes B, \Theta_i)$ where 
$$ 
\Theta_i ((x_1,x_2)) = ( \left(\begin{array}{cc}
                  x_1    &     0         \\ 
                  0       &  (\mathrm{id}_{M_{2^{i-1}}(\mathbb{C})}\otimes\beta ) (x_2)      
 \end{array} \right) ,
\left(\begin{array}{cc}
                  x_2    &     0         \\ 
                  0       &  (\mathrm{id}_{M_{2^{i-1}}(\mathbb{C})}\otimes\beta ) (x_1)      
 \end{array} \right) )
$$
for any $(x_1,x_2) \in M_{2^{i-1}}(\mathbb{C})\otimes B\oplus M_{2^{i-1}}(\mathbb{C})\otimes B$. 
Therefore we have $K_0(\mathcal{W}_2^\alpha )= \mathbb{Z} [\frac{1}{2}]$. 
It is easy to see that $\alpha$ is outer, and hence $\mathcal{W}_2\rtimes_{\alpha}\mathbb{Z}_2$ is stably isomorphic to $\mathcal{W}_2^\alpha$. 
Consequently we obtain a locally representable outer $\mathbb{Z}_2$-action 
$(\mathcal{W}_2, \alpha )$ in $\mathcal{C}_{\mathrm{R}}$ such that 
$$
K_0(\mathcal{W}_2\rtimes_{\alpha} \mathbb{Z}_2) =\mathbb{Z}[\frac{1}{2}] \quad \mathrm{and} \quad K_1(\mathcal{W}_2\rtimes_{\alpha} \mathbb{Z}_2) =0.
$$
Note that the dual action $\hat{\alpha}$ on $\mathcal{W}_2\rtimes_{\alpha}\mathbb{Z}_2$ has the Rohlin property by Proposition \ref{thm:dual}. 
\end{ex}

\begin{rem}
Blackadar showed that if $A$ is a C$^*$-algebra with $K_0(A)=K_1(A)=0$ and $\beta$ is a $\mathbb{Z}_2$-action on $A$, 
then $K_0(A\rtimes_{\beta}\mathbb{Z}_2)$ and $K_1(A\rtimes_{\beta}\mathbb{Z}_2)$ are uniquely 2-divisible (see \cite[Lemma 4.4]{I1}). (Note that 
an abelian group $\Gamma$ is uniquely 2-divisible if and only if $\Gamma\otimes_{\mathbb{Z}} \mathbb{Z}[\frac{1}{2}]$ is isomorphic to $\Gamma$.) 
For every pair of countable abelian uniquely 2-divisible groups $\Gamma_0$ and $\Gamma_1$,  Izumi constructed an approximately representable outer action $\beta$ of $\mathbb{Z}_2$ on 
$\mathcal{O}_2$ such that $K_0(\mathcal{O}_2\rtimes_{\beta}\mathbb{Z}_2)=\Gamma_0$ and $K_1(\mathcal{O}_2\rtimes_{\beta}\mathbb{Z}_2)=\Gamma_1$ 
\cite[Theorem 4.8 (3)]{I1}. If the Robert conjecture is true, then we can construct an approximately representable outer action $\beta$ of $\mathbb{Z}_2$ on 
$\mathcal{W}_2$ such that $K_0(\mathcal{W}_2\rtimes_{\beta}\mathbb{Z}_2)=\Gamma_0$ and $K_1(\mathcal{W}_2\rtimes_{\beta}\mathbb{Z}_2)=\Gamma_1$ by using 
$\alpha$ in Example \ref{ex:dyadic-rational} and the same construction of Izumi. Indeed, there exists a simple separable nuclear stably projectionless C$^*$-algebra $B$ with 
a unique tracial state and no unbounded traces such that $K_0(B)=\Gamma_0$ and $K_1(B)=\Gamma_1$ (see, for example, \cite{K2} and \cite[Proposition 5.2]{Na2}). 
If the Robert conjecture is true, then $B\otimes \mathcal{W}_2$ is isomorphic to $\mathcal{W}_2$. Define an action $\beta$ on $B\otimes\mathcal{W}_2\cong \mathcal{W}_2$ by 
$\beta :=\mathrm{id}\otimes \alpha$. Then $\beta$ is an approximately representable outer action such that 
$K_0(\mathcal{W}_2\rtimes_{\beta}\mathbb{Z}_2)=\Gamma_0$ and $K_1(\mathcal{W}_2\rtimes_{\beta}\mathbb{Z}_2)=\Gamma_1$ by the Kunneth formula. 
\end{rem}

\section*{Acknowledgments}
The author would like to thank the people in University of Copenhagen, where this work was done, 
for their hospitality. He is also grateful to Hiroki Matui for many helpful discussions and valuable suggestions.

\end{document}